\newcommand{\myitem}[1]{%
\item[#1]\protected@edef\@currentlabel{#1}%
}
\newcommand{\optionaldesc}[2]{%
  \phantomsection
  #1\protected@edef\@currentlabel{#1}\label{#2}%
}
\newcommand*{\bigcdot}{}
\DeclareRobustCommand*{\bigcdot}{%
  \mathbin{\mathpalette\bigcdot@{}}%
}
\newcommand*{\bigcdot@scalefactor}{.7}
\newcommand*{\bigcdot@widthfactor}{1.15}
\newcommand*{\bigcdot@}[2]{%
  \sbox0{$#1\vcenter{}$}
  \sbox2{$#1\cdot\m@th$}%
  \hbox to \bigcdot@widthfactor\wd2{%
    \hfil
    \raise\ht0\hbox{%
      \scalebox{\bigcdot@scalefactor}{%
        \lower\ht0\hbox{$#1\bullet\m@th$}%
      }%
    }%
    \hfil
  }%
}
\renewenvironment{thebibliography}[1]{%
\begin{oldthebibliography}{#1}%
\setlength{\baselineskip}{.9em}
\linespread{.99}
\setlength{\parskip}{0ex}%
\setlength{\itemsep}{.4em}%
}%
{%
\end{oldthebibliography}%
}
\newtheorem{theorem}{Theorem}[section]
\newtheorem{proposition}[theorem]{Proposition}
\newtheorem{lemma}[theorem]{Lemma}
\theoremstyle{definition}
\newtheorem{definition}[theorem]{Definition}
\newtheorem{remark}[theorem]{Remark}
\newtheorem{example}[theorem]{Example}
\newtheorem*{acknowledgement}{Acknowledgements}
\DeclareMathOperator{\cl}{cl}
\DeclareMathOperator{\essinf}{ess\ inf}
\DeclareMathOperator{\argmin}{argmin\, }
\DeclareMathOperator{\Var}{Var}
\DeclareMathOperator{\conv}{conv}
\DeclareMathOperator{\Log}{\mathcal{L}}
\newcommand{\Exp}{\mathscr{E}}
\newcommand{\e}{\mathrm{e}}
\renewcommand{\d}{\mathrm{d}}
\newcommand{\Inf}{\inf\limits}
\newcommand{\Lim}{\lim\limits}
\newcommand{\one}{\mathbbm{1}}
\newcommand{\sint}{\bigcdot}
\newcommand{\R}{\mathbb{R}}
\newcommand{\N}{\mathbb{N}}
\newcommand{\F}{\mathscr{F}}
\newcommand{\Gs}{\mathscr{G}}
\newcommand{\tauE}{{}^\tau\!\Exp}
\newcommand{\TmE}{{}^{T_m}\!\Exp}
\newcommand{\TkE}{{}^{T_k}\!\Exp}
\newcommand{\tauZ}{{}^\tau\!Z}
\newcommand{\tauN}{{}^\tau\!N}
\newcommand{\sigZ}{{}^{\sigma}\!Z}
\newcommand{\sigmZ}{{}^{\sigma_m}\! Z}
\newcommand{\sigmkZ}{{}^{\sigma_{m_k}}\! Z}
\newcommand{\tauhZ}{{}^\tau\!\hat{Z}}
\newcommand{\TB}{\overline{\Theta}}
\newcommand{\TBtau}{\overline{\Theta}_{\tau}}
\newcommand{\TT}{\widetilde{\Theta}}
\newcommand{\TTtau}{\widetilde{\Theta}_{\tau}}
\newcommand{\GH}{\widehat{\mathcal{G}}}
\newcommand{\GB}{\overline{\mathcal{G}}}
\newcommand{\GBtau}{\overline{\mathcal{G}}_{\tau}}
\newcommand{\GT}{\widetilde{\mathcal{G}}}
\newcommand{\GTtau}{\widetilde{\mathcal{G}}_{\tau}}
\newcommand{\cG}{\mathcal{G}}
\newcommand{\cGtau}{\mathcal{G}_\tau}
\newcommand{\vt}{\vartheta}
\newcommand{\vp}{\varphi}
\newcommand{\ve}{\varepsilon}
\newcommand{\E}{\mathrm{E}}
\newcommand{\T}{\mathcal{T}}
\numberwithin{equation}{section}
\begin{document}
\title{The law of one price in quadratic hedging\\ and mean--variance portfolio selection}
\author{Ale\v{s} \v{C}ern\'{y}\footnote{Bayes Business School, City St George's, University of London, 106 Bunhill Row, London EC1Y 8TZ, UK, {\tt ales.cerny.1@city.ac.uk}.}\and Christoph Czichowsky\footnote{Department of Mathematics, London School of Economics and Political Science, Columbia House, Houghton Street, London WC2A 2AE, UK, {\tt c.czichowsky@lse.ac.uk} ({\scriptsize\Letter}). }\raisebox{0.7ex}{\scriptsize\, ,\,\Letter}}
\date{First version: October 28, 2022; this version: \today}
\maketitle
\vspace{-0.5cm}
\begin{center}
\emph{Dedicated to Martin Schweizer on the occasion of his 60th birthday}\bigskip\medskip
\end{center}
\begin{abstract} 
The \emph{law of one price (LOP)} broadly asserts that identical financial flows should command the same price. We show that, when properly formulated,  LOP is the minimal condition for a well-defined mean--variance portfolio selection framework without degeneracy.  Crucially, the paper identifies a new mechanism through which LOP can fail in a continuous-time $L^2$ setting without frictions, namely `trading from just before a predictable stopping time', which surprisingly identifies LOP violations even for continuous price processes.\smallskip 

\noindent Closing this loophole allows to give a version of the ``Fundamental Theorem of Asset Pricing'' appropriate in the quadratic context, establishing the equivalence of the economic concept of LOP with the probabilistic property of the existence of a local $\Exp$-martingale state price density. The latter provides unique prices for all square-integrable claims in an extended market and subsequently plays an important role in quadratic hedging and mean--variance portfolio selection.\smallskip 

\noindent Mathematically, we formulate a novel variant of the uniform boundedness principle for conditionally linear functionals on the $L^0$ module of conditionally square-integrable random variables. We then study the representation of time-consistent families of such functionals in terms of stochastic exponentials of a fixed local martingale.\medskip

\noindent {\bf Keywords }\,Law of one price; $\Exp$-density; efficient frontier; mean--variance portfolio selection; quadratic hedging \medskip

\noindent {\bf MSC (2020) }\,91B28; 60H05; 93E20\medskip

\noindent \textbf{JEL classification }\,G11; G12; C61 
\end{abstract}

\newpage
\section{Introduction}
In this paper, we take a fresh look at the mathematical foundations of quadratic hedging. For a fixed time horizon $T>0$, a locally square-integrable semimartingale price process $S$, and a square-integrable contingent claim $H$, the goal of quadratic hedging is to 
\begin{equation}\label{MVH1}
\text{minimise $\E\left[\left(v+\int_{(0,T]}\vt_u dS_u-H\right)^2\right]$} 
\end{equation}
over initial wealth $v\in\R$ and all reasonable trading strategies $\vt$; see Pham \cite{pham.00} and Schweizer \cite{schweizer.01,schweizer.10} for literature overviews. The task \eqref{MVH1}, first formulated and analyzed in the seminal paper of Schweizer \cite{schweizer.92.aap},  was abstracted from an early work on mean--variance hedging by Duffie and Richardson \cite{duffie.richardson.91}; it is thus intimately related to efficient portfolio selection in the sense of Markowitz \cite{markowitz.52}.%

Our point of departure is the quadratic hedging framework of \v{C}ern\'{y} and Kallsen~\cite{cerny.kallsen.07}, or rather its extension by Czichowsky and Schweizer~\cite{czichowsky.schweizer.13} to settings that admit some arbitrage opportunities, properly formulated as `$L^2$ free lunches' (Schachermayer~\cite[Definition~1.3]{schachermayer.02}). The aim is to obtain minimal conditions on $S$ and $\vt$, under which the solution of \eqref{MVH1} exists and retains the form reported in \cite{cerny.kallsen.07,czichowsky.schweizer.13}. This leads us to study an appropriate version of the \emph{law of one price (LOP)}  and its implication for the existence of suitably modified state price densities in an $L^2$ setting.

Our analysis adds an important qualifier to the classical textbook folklore asserting that the existence of a non-zero state price density is sufficient for the law of one price in continuous-time models; cf. Cochrane \cite[Section~4.3]{cochrane.01}. We show that it is not enough to consider signed conditional state price densities as in the seminal paper of Hansen and Richard \cite{hansen.richard.87} but one must also require certain limiting properties at predictable stopping times. Our main result (Theorem \ref{T:M1}) provides a version of the ``Fundamental Theorem of Asset Pricing'' appropriate in the quadratic context, establishing the equivalence of the economic concept of LOP with the probabilistic property of the existence of a local $\Exp$-martingale state price density (stemming from the notion of $\Exp$-martingales introduced by Choulli, Krawczyk, and Stricker \cite{choulli.al.98}, which we extend slightly here). The latter gives unique prices for all square-integrable contingent claims in an extended market and can be chosen such that the passage to the extended market does not alter the efficient frontier (Remark~\ref{R:VOED}). 
Mathematically, we formulate a novel variant of the uniform boundedness principle for conditionally linear functionals on the $L^0$ module of conditionally square-integrable random variables (Proposition \ref{P:UBP}). We then study the representation of time-consistent families of such functionals in terms of stochastic exponentials of a fixed local martingale (Proposition \ref{P:sopZ}).

There are very few theoretical studies of LOP in frictionless markets and none specifically in the context of quadratic hedging. On closer reading, the existing studies are also limited in scope. Courtault, Delbaen, Kabanov, and Stricker \cite{courtault.al.04} examine finite discrete-time models, observing on p. 529  
\begin{quote}
\emph{\textellipsis [the discrete-time] results have no natural counterparts for continuous-time models. ``Natural'' here means ``for the standard concept of admissibility''. The latter requires that the value process is bounded from below.}
\end{quote}
B\"attig and Jarrow~\cite{battig.jarrow.99} study wealth transfers between two fixed dates in a complete market setting. Their assumption of a signed pricing measure in \cite[Theorem~1]{battig.jarrow.99} implicitly invokes the law of one price on $L^\infty$ with proximity given by the weak-star topology. One should observe that the space $L^\infty(P)$ depends on $P$ only through the null sets, while $L^2(P)$ lacks such invariance property. This paper provides, for the first time in the literature, a complete analysis of the law of one price in a continuous-time $L^2$ setting without frictions. The repercussion of our findings for the arbitrage theory on $L^\infty$ are left for future work. For continuous price processes, we show that LOP is equivalent to the existence of an equivalent local martingale measure with square-integrable density and hence to the condition of no `$L^2$ free lunch' as shown in Stricker \cite[Theorems~2 and 3]{stricker.90}. In Example~\ref{example}, we illustrate that LOP can fail for continuous price processes that satisfy the no-arbitrage (NA) condition both in the $L^2$ and the $L^\infty$ sense (Schachermayer~\cite[Definition~1.1]{schachermayer.02}).

The paper is organised as follows. In Section~\ref{S:2}, after establishing notation (Subsection~\ref{SS:2.1}), market dynamics, and admissible strategies (Subsection~\ref{SS:2.2}),  we introduce alternative descriptions of the law of one price by means of (i) the price process $S$; (ii) pricing functionals; and (iii) state price densities (Subsections~\ref{SS:2.3}--\ref{SS:2.6}). We conclude Section~\ref{S:2} with a review  of $\Exp$-densities and $\Exp$-martingales (Subsection~\ref{SS:2.7}). Section~\ref{S:3} contains the main results of the paper. In Subsection~\ref{SS:3.1}, we pull the various notions of the law of one price together to demonstrate their equivalence (Theorem~\ref{T:M1}). Subsection~\ref{SS:3.2}  illustrates several phenomena that arise when the law of one price fails. In Subsection~\ref{SS:3.3bis} we offer some intuition for the law of one price and interpret the main result (Theorem~\ref{T:M1}) as a market extension theorem. We next examine the consequences of LOP for quadratic hedging (Subsection~\ref{SS:3.4}), extend its applicability to the conditional framework of Hansen and Richard \cite{hansen.richard.87} (Subsection~\ref{SS:3.5}), and study the resulting mean--variance portfolio selection in the presence of a contingent claim (Subsection~\ref{SS:3.6}). Section~\ref{S:4} contains proofs of the main theorem presented via several partial statements of independent interest.


\section{Problem formulation}\label{S:2}


\subsection{Preliminaries}\label{SS:2.1} 

We work on a filtered probability space $\big(\Omega,\F,(\F_t)_{0\leq t\leq T},P\big)$ with $\F_T=\F$, satisfying the usual conditions of right continuity and completeness. For $p\in[0,\infty]$, we shall frequently write $L^p(P)$ or just $L^p$ as a shorthand for $L^p(\F_T,P)$. The $L^2$ closure of an arbitrary $\mathcal{A}\subseteq L^2$ is denoted by $\cl\mathcal{A}$.\medskip 

\noindent{\bf Conditional expectations.} Throughout the paper, we consider generalised conditional expectations as defined in \cite[I.1.1]{js.03}, for example. That is, for a random variable $X$ and $\Gs\subseteq\F$, the conditional expectation $\E[X\mkern2mu|\mkern2mu\Gs]$ is finite if and only if there exists a $\Gs$-measurable random variable $K>0$ such that $KX$ is integrable (He, Wang, and Yan \cite[Theorem~1.16]{he.wang.yan.92}), in which case one has $\E[X\mkern2mu|\mkern2mu\Gs]=\frac{\E[KX\mkern2mu|\mkern2mu\Gs]}{K}$, where the right-hand side now features the standard conditional expectation.
In particular, $X$ may have finite conditional expectation without being integrable.\medskip

\noindent{\bf Stopping times.} We use the convention that $\inf\emptyset=\infty$ so that stopping times are, in general, $[0,T]\cup\{\infty\}$-valued. The set of all $[0,T]$-valued stopping times is denoted by $\T$. A sequence of stopping times $(\sigma_n)_{n=1}^\infty$ is said to converge \emph{stationarily} to $T$ if $P(\sigma_n<T)\to 0$ \cite[p.~297]{dellacherie.meyer.82}. Let $\mathscr{C}$ be a class of stochastic processes. With Dellacherie and Meyer \cite[Definition~V.27]{dellacherie.meyer.82}, we say that $X$ belongs to $\mathscr{C}$ \emph{locally} if there is a sequence of stopping times $\tau_n$ increasing to $\infty$ (a \emph{localizing sequence}) such that $X^{\tau_n}\one_{\{\tau_n>0\}}\in\mathscr{C}$ for each $n\in\N$. This notion of localization is slightly broader than the one in Jacod and Shiryaev \cite[I.1.34]{js.03}. Note that if $(\tau_n)_{n=1}^\infty$ is a localizing sequence of stopping times, the stopping times $\tau_n\wedge T$ converge stationarily to $T$. Thus on the finite time horizon $[0,T]$ assumed in this paper, it is sufficient to consider localizing sequences that converge stationarily to $T$.\medskip

\noindent{\bf Stochastic exponential and logarithm.} For a semimartingale $S$, we denote by $L(S)$ the space of all $S$-integrable predictable processes $\vt=(\vt_t)_{0\leq t\leq T}$ and write $\vt\sint S_t:=\int_{(0,t]}\vt_u dS_u$ for their stochastic integral up to time $t\in[0,T]$. The symbol $\Exp(N)$ denotes the stochastic exponential of a semimartingale $N$, i.e., the unique strong solution of the stochastic differential equation $\Exp(N) = 1+\Exp(N)_-\sint N$; see \cite{doleans-dade.70}. We say that $S$ \emph{does not reach zero continuously and is absorbed in zero} if for $\sigma^S:=\inf\{ t>0\,:\,S_t=0\}$ one has
\begin{equation}\label{eq:dnrzcaz}
\text{ $S_-\neq 0$ on $\llbracket 0,\sigma^S\rrbracket$ and $S=0$ on $\llbracket \sigma^S,\infty\llbracket$.} 
\end{equation}
 For such $S$ the stochastic logarithm $\Log(S)$ is well defined by  
$ \Log(S) = \frac{\one_{\llbracket 0,\sigma^S\rrbracket}}{S_-}\sint S $
and has the property $S = S_0\Exp(\Log(S))$; see \cite[Proposition 2.2]{choulli.al.98}.\medskip

\noindent{\bf Further semimartingale notation.} For a special semimartingale $Y$, we write $Y=Y_0+M^Y+B^Y$ for the canonical decomposition of $Y$ into its local martingale component $M^Y$ and  its finite variation predictable component $B^Y$, both starting at zero. We denote by $\mathcal{H}^2$ the class of all special semimartingales $Y$ such that $\E[[M^Y,M^Y]_\infty]+\E[(\int_0^\infty|dB^Y_u|)^2]<\infty$; see Protter \cite[Chapter IV]{protter.05}.


\subsection{Asset prices and trading strategies}\label{SS:2.2}

Consider a financial market consisting of one riskless asset with constant value $1$ and $d$ risky assets described by an $\R^d$-valued locally square-integrable semimartingale $S$. Denote the running supremum of $|S|$ by $S^*$.
\begin{definition}\label{D:simple}
A trading strategy $\vt$ is called \emph{simple} if it is of the form $\vartheta=\sum_{i=1}^{m-1}\xi_i \one_{\rrbracket \sigma_i,\sigma_{i+1}\rrbracket }$ with stopping times \hbox{$0\leq\sigma_1\leq\cdots\leq\sigma_m\le \tau_n$} for some $n\in\N$, some bounded $\R^d$-valued $\F_{\sigma_i}$-measurable random variables $\xi_i$ for $i=1,\ldots,m-1$, and some localizing sequence $(\tau_n)_{n=1}^\infty$ of stopping times converging stationarily to $T$ such that $S^*_{\tau_n}\in L^2$. The set of all simple trading strategies is denoted by $\Theta$. 

We further introduce strategies that trade from a stopping time $\tau$ or from just before a predictable stopping time $\tau$ by setting
\begin{align*}
\Theta_\tau&{}:=\{\vt\in\Theta\,:\,\vt\one_{\llbracket0,\tau\rrbracket}=0\},\qquad{\tau\in\T};\\
\Theta_{\tau-}&{}:=\{\vt\in\Theta\,:\,\vt\one_{\llbracket0,\tau\llbracket}=0\},\qquad{\tau\in\T}\text{ predictable}.
\end{align*}
\end{definition}

As is well known, the set of terminal wealths generated by simple strategies need not be $L^2$-closed; see Monat and Stricker~\cite{monat.stricker.95},  Delbaen, Monat, Schachermayer, Schweizer, and Stricker~\cite{delbaen.al.97}, and Choulli, Krawczyk, and Stricker~\cite{choulli.al.98}. The next definition taken from \cite{czichowsky.schweizer.13} and building up on \cite{cerny.kallsen.07} offers a way out.
\begin{definition}
A trading strategy $\vt\in L(S)$ is called \emph{admissible} if there exists a sequence $(\vt^n)_{n\in\N}$ of simple trading strategies $\vt^n=(\vt^n_t)_{0\leq t\leq T}\in\Theta$, called \emph{approximating sequence for $\vt$}, such that
\begin{enumerate}[(1)]
\item $\vt^n\sint S_T\overset{L^2}{\longrightarrow} \vt\sint S_T$;
\item $\vt^n\sint S_\tau\overset{L^0}{\longrightarrow} \vt\sint S_\tau$ for all $\tau\in\T$.
\end{enumerate}
The set of all admissible trading strategies is denoted by $\TB$, and we further let 
\begin{align*}
\TB_\tau&{}:=\{\vt\in\TB\,:\,\vt\one_{\llbracket0,\tau\rrbracket}=0\},\qquad{\tau\in\T};\\
\TB_{\tau-}&{}:=\{\vt\in\TB\,:\,\vt\one_{\llbracket0,\tau\llbracket}=0\},\qquad{\tau\in\T}\text{ predictable}.
\end{align*}
\end{definition}


\subsection{Law of one price for the price process \texorpdfstring{$S$}{S}}\label{SS:2.3}

In layperson's terms, the law of one price states that portfolios generating the same terminal payoff by trading in the market $S$ should have the same value at all earlier times, thus eliminating the most conspicuous arbitrage opportunities. Then, by the linearity of portfolio formation, all portfolios generating zero payoff at maturity $T$ should have price zero at all earlier times. Since we wish to consider pricing rules that are continuous in the $L^2$ space of payoffs, the law of one price must be true also approximately in $L^2$. This yields the requirement~\ref{def:lop:1} below for trading between arbitrary stopping time $\tau$ and the terminal date $T$. 

For fixed $\tau$, such line of reasoning is very natural and appears, for example, in Hansen and Richard \cite{hansen.richard.87} in the context of static trading with infinitely many assets. Observe that in a finite discrete time setup, the continuity requirement is unnecessary since the set of terminal wealth distributions attainable by trading is closed in $L^0$, hence also in $L^2$. 

The crucial step in this paper is a deeper analysis of predictable times. For a predictable time, say $\sigma$, it is possible to start trading immediately before $\sigma$, at time $\sigma-$ so to say. Requirement \ref{def:lop:2} below formulates the law of one price for such trades. Here it becomes important that the approximation that was previously static takes on a temporal dimension over a sequence of announcing times.

\begin{definition}\label{D:LOP1S}
We say that \emph{the price process $S=(S_t)_{0\leq t\leq T}$ satisfies the law of one price} if the following conditions hold.
\begin{enumerate}[(1)]
\item\label{def:lop:1} For all stopping times $\tau\in\T$, all $\F_\tau$-measurable endowments $x_\tau$ and all sequences $(\vt^n)_{n=1}^\infty$ of simple trading strategies such that $x_\tau+\vt^n\one_{\rrbracket\tau,T\rrbracket}\sint S_T\xrightarrow{L^2}0$, we have $x_\tau=0$. 
\item\label{def:lop:2} Let $\sigma\in\T$ be a predictable stopping time and $(\sigma_n)_{n=1}^\infty$ be any announcing sequence of stopping times for $\sigma$. Then, for all sequences of $\F_{\sigma_n}$-measurable endowments $(x^n_{\sigma_n})_{n=1}^\infty$ and $(\vt^n)_{n=1}^\infty$ of simple trading strategies such that 
$x^n_{\sigma_n}+\vt^n\one_{\rrbracket\sigma_n,T\rrbracket}\sint S_T\xrightarrow{L^2}0$ and 
$x^n_{\sigma_n}\xrightarrow{L^0} x_{\sigma-}$ for some random variable $x_{\sigma-}$, we have that $x_{\sigma-}=0$.  \end{enumerate}
\end{definition}
\begin{remark}\label{rem:uniqueness}
The law of one price for $S$ has the following easy consequences.
\begin{itemize}
\item Since the simple strategies approximate admissible strategies in $L^2$ at maturity and in $L^0$ at intermediate times, the law of one price extends to admissible strategies by a diagonal sequence argument. 
\item The wealth process of an admissible strategy is uniquely determined by its terminal value, i.e., if for $\vt$,$\tilde{\vt}\in\TB$ one has $\vt\sint S_T=\tilde\vt\sint S_T$, then $\vt\sint S$ and $\tilde\vt\sint S$ are indistinguishable. 
\item In the setting of item~\ref{def:lop:2}, the $L^0$-limit of the sequence $\vt^n\sint S_{\sigma_n}$, should it exist for a given sequence of strategies $\vt^n\in\TB$, does not depend on the announcing sequence of stopping times $(\sigma_n)_{n=1}^\infty$. 
\end{itemize}
\end{remark}


\subsection{Price systems}
\begin{definition}\label{D:PSlop}
A family $\{p_\tau\}_{\tau\in\T}$ of pricing operators $p_{\tau}:L^2(\F_T,P)\to L^0(\F_\tau,P)$ is a \emph{price system satisfying the law of one price} if the following properties hold for every $\tau\in\T$.
\begin{enumerate}[(1)]
\item[\optionaldesc{(1)}{D:PSlop:1}] {\bf Correct pricing of the riskless asset.} We have $p_\tau(1)=1$.
\item[\optionaldesc{(2)}{D:PSlop:2}] {\bf Time consistency.} We have that $p_\sigma$ satisfies $p_\sigma(p_\tau(H))=p_\sigma(H)$ for each $H\in L^2(\F_T,P)$ such that $p_\tau(H)\in L^2(\F_\tau,P)$.
\item[\optionaldesc{(3a)}{D:PSlop:3a}]{\bf Conditional linearity.} We have $p_\tau(a_1H_1+a_2H_2)=a_1p_\tau(H_1)+a_2p_\tau(H_2)$
for all $H_1,H_2\in L^2(\F_T,P)$ and $a_1,a_2\in L^\infty(\F_\tau,P)$.
\item[\optionaldesc{(3b)}{D:PSlop:3b}] {\bf Conditional continuity.} Let $(H_n)_{n=1}^\infty$ be a sequence of random variables $H_n$ that converges to some $H$ in $L^2(\F_T,P)$. Then, $p_{\tau}(H_n)$ converges to $p_{\tau}(H)$ in $L^0(\F_\tau,P)$.
\item[\optionaldesc{(4)}{D:PSlop:4}] {\bf Left limits at predictable stopping times.} For predictable $\tau\in\T$, any announcing sequence  $(\tau_n)_{n=1}^\infty$ for $\tau$, and any $H\in L^2(\F_T,P)$, we have that $p_{\tau_n}(H)$ converges to some random variable in $L^0(\F_T,P)$. The limiting random variable, independent of the announcing sequence $(\tau_n)_{n=1}^\infty$, will be denoted $p_{\tau-}(H)$.
\end{enumerate}
Furthermore, a family $\{p_\tau\}_{\tau\in\T}$ of pricing operators $p_{\tau}:L^2(\F_T,P)\to L^0(\F_\tau,P)$ is said to be \emph{compatible} with $S$, if 
$p_\tau(S_{\tau_n})=S_{\tau_n\wedge \tau}$ for all $n\in\N$, all $\tau\in\T$, and any localizing sequence $(\tau_n)_{n=1}^\infty$ of stopping times $\tau_n$ converging stationarily to $T$ such that $S^*_{\tau_n}\in L^2(\F_T,P)$.
\end{definition}

Observe that $p_\tau(H)$ is the terminal wealth obtained by investing the time-$\tau$ price of $H$ into the risk-free asset. To uphold the law of one price, the prices of $H$ and $p_\tau(H)$ must therefore be the same at time $\tau$. This indeed follows from conditions \ref{D:PSlop:1} and \ref{D:PSlop:3a} of Definition \ref{D:PSlop}. Time consistency \ref{D:PSlop:2} further asserts that the two portfolios $p_\tau(H)$ and $H$ have the same value at all \emph{earlier} stopping times $\sigma\leq \tau$. This natural LOP  condition does not follow from \ref{D:PSlop:1} and \ref{D:PSlop:3a}.  Conditions \ref{D:PSlop:3b} and \ref{D:PSlop:4} yield time consistency also at left limits of predictable times, i.e., one has $p_{\sigma-} = p_{\sigma-}(p_{\tau-})$ for all predictable $\sigma,\tau\in\T$ with $\sigma\leq\tau$.

Conditions \ref{D:PSlop:3a}, \ref{D:PSlop:3b}, and \ref{D:PSlop:4} reflect, in this order, increasing generality of modelling frameworks. Combined with \ref{D:PSlop:1} and \ref{D:PSlop:2}, conditional linearity \ref{D:PSlop:3a} gives LOP on finite probability spaces (e.g., Koch-Medina and Munari \cite{koch-medina.munari.20})  and also in finite discrete-time models with finitely many assets (e.g., Courtault et al. \cite{courtault.al.04}); finite discrete time with infinitely many assets needs also continuity~\ref{D:PSlop:3b} (e.g., Hansen and Richard \cite{hansen.richard.87}); continuity at predictable times \ref{D:PSlop:4} has not been studied previously.

\begin{remark}
Since $L^2(\F_T,P)$ and $L^0(\F_\tau,P)$ are completely metrizable topological vector spaces and $p_\tau$ is linear, the continuity \ref{D:PSlop:3b} is equivalent to the property that $p_\tau$ has a closed graph in $L^2(\F_T,P)\times L^0(\F_\tau,P)$ by  
\cite[Theorem~5.20]{aliprantis.border.06}. By the linearity of $p_{\tau}$, this is equivalent to the closed graph property at $0$.
\end{remark}


\subsection{State price densities}\label{SS:2.6}
A conditionally linear and continuous pricing operator $p_{\tau}:L^2(\F_T,P)\to L^0(\F_\tau,P)$ can naturally be represented as a conditional expectation involving a conditionally square-integrable random variable. Namely, by a conditional version of the Riesz representation theorem for Hilbert spaces in Hansen and Richard \cite[Theorem~2.1]{hansen.richard.87}, there is an $\F_T$-measurable random variable $\tauZ_T$, commonly known as the \emph{state price density}, such that 
\begin{equation*}
\E[\tauZ_T^2\,|\,\F_\tau]<\infty\quad\text{ and }\quad p_\tau(H) = \E[\tauZ_T H \,|\,\F_\tau] \quad\text{ for all } H\in L^2(\F_T,P),\tau\in\T.
\end{equation*}

Let us rephrase the notion of a `price system satisfying the law of one price' in terms of the corresponding family of state price densities  $\{\tauZ_T\}_{\tau\in\T}$. 
\begin{definition}[State price density satisfying the law of one price]\label{D:SPDlop}
 We say that a family of $\F_T$-measurable random variables  $\{\tauZ_T\}_{\tau \in \T}$ is a \emph{state price density satisfying the law of one price} if the following conditions hold for every stopping time $\tau\in\T$.%
\begin{enumerate}[(1)]
\item\label{spd.1} {\bf Correct pricing of the risk-free asset.} One has $\tauZ_\tau:=\E[\,\tauZ_{T}\,|\,\F_{\tau}]=1$.
\item\label{spd.2} {\bf Time consistency.} For all $\sigma\leq\tau\in\T$, one has $\sigZ_{T}=\sigZ_\tau\tauZ_{T}$
with $\sigZ_\tau:=\E[\sigZ_{T}\,|\,\F_\tau]$.
\item\label{spd.3} {\bf Conditional square integrability.} One has $\E[\,\tauZ_T^2\,|\,\F_\tau]<\infty$.
\item\label{spd.4} {\bf Bounded conditional second moments before predictable stopping times.} For any predictable $\tau$ and any announcing sequence  $(\tau_n)_{n=1}^\infty$ for $\tau$, we have that the $\F_{\tau-}$-measurable random variable $C:=\sup_{n\in\N}\E[{}^{\tau_n}\mkern-1.5mu Z_T^2\,|\,\F_{\tau_n}]$ is finite.
\end{enumerate}
Furthermore, we say that a family of random variables $\{\tauZ_T\}_{\tau\in\T}$ is \emph{compatible} with $S$ if 
$$\E[S_{\tau_n}\mkern1mu\tauZ_T\,|\,\F_\tau]=S_{\tau_n\wedge \tau}$$ for all $n\in\N$, all $\tau\in\T$, and any localizing sequence $(\tau_n)_{n=1}^\infty$ of stopping times $\tau_n$ converging stationarily to $T$ such that $S^*_{\tau_n}\in L^2(\F_T,P)$.
\end{definition}

\begin{remark}
Process $\tauZ=(\tauZ_t)_{0\leq t\leq T}$ that arises in items \ref{spd.1} and \ref{spd.2} by setting $\tauZ_t=\E[\tauZ_T\,|\,\F_t]$ for $0\leq t\leq T$ is  not necessarily a uniformly integrable martingale. However,  for $K:=\E[|\tauZ_T|\,|\,\F_\tau]\vee 1$ the adapted process $\frac{\tauZ}{K}\one_{\rrbracket \tau,T\rrbracket}$ coincides on $\rrbracket \tau,T\rrbracket$ with the uniformly integrable martingale closed by $\frac{\tauZ_T}{K}$. This also shows that $\tauZ$ is a semimartingale.
\end{remark}


\subsection{\texorpdfstring{$\Exp$}{E}--densities and \texorpdfstring{$\Exp$}{E}--martingales}\label{SS:2.7}

We next recall and slightly adapt the concept of $\Exp$-martingales introduced by Choulli, Krawczyk, and Stricker \cite{choulli.al.98}. 
For any stopping time $\tau$, we denote the process $Y$ stopped at $\tau$ by $Y^\tau$ and the process $Y$ restarted at $\tau$ from 0 by ${}^\tau Y=Y-Y^\tau$; but we set $\tauE(N)=\Exp(N-N^\tau)$. The stochastic exponential $\tauE(N)$ therefore denotes a multiplicative restarting from 1 rather than an additive restarting from 0. We shall now study the family of processes $\{\tauE(N)\}_{\tau\in\T}$.
\begin{definition}\label{D:Edens}
We say that the family $\{\tauE(N)\}_{\tau\in\T}$ is an $\Exp$-density if $\E[\,\tauE(N)_T\,|\,\F_\tau]=1$ for all 
$\tau\in\T$.  An $\Exp$-density $\{\tauE(N)\}_{\tau\in\T}$ will be called \emph{square integrable} if one has $\E[\,\tauE(N)_T^2\,|\,\F_{\tau}]<\infty$ for all $\tau\in\T$.  
\end{definition}

\begin{definition}\label{D:Emart}
An adapted RCLL process $Y$ is an \emph{$\Exp(N)$-martingale} if 
$$ Y_\tau = \E[\,\tauE(N)_TY_T\,|\,\F_\tau],\qquad \tau\in\T.$$
We say $Y$ is an \emph{$\Exp(N)$-local martingale} if there is a localizing sequence of stopping times $\tau_n$ such that $Y^{\tau_n}$ is an $\Exp(N)$-martingale for each $n\in\N$.  
\end{definition}
\begin{remark}\label{R:Tm}
We have slightly generalised the original definition of $\Exp$-martingales (see Definition 3.11 in~\cite{choulli.al.98}) by imposing milder integrability conditions.  To see this, observe that by \cite[p.~884]{choulli.al.98} for each semimartingale $N$ there is a sequence of stopping times increasing to $\infty$ such that $T_0=0$ and $T_{m+1}=\Inf\{t>T_m\, |\, \TmE(N)_t=0\}$.  The following are then equivalent.
\begin{enumerate}[(i)]
\item $Y$ is an $\Exp(N)$-martingale in the sense of Definition~\ref{D:Emart}.
\item There is a sequence of $\F_{T_m}$-measurable positive random variables $K_m$ such that 
$$
\E\big[|Y_{T_m}K_m\,\TmE_{T_{m+1}}|\big]<\infty
$$
and ${}^{T_m}\!Y K_m\,\TmE$ is a $P$-martingale for all $m\in\N\cup\{0\}$.
\end{enumerate}
In \cite{choulli.al.98} the random variables $(K_m)_{m=0}^\infty$ are not needed since the combination of the assumptions that $Y$ is square integrable and $\{\tauE(N)\}_{\tau\in\T}$ satisfies a reverse H\"older inequality permits taking $K_m\equiv1$ for all $m\in\N\cup\{0\}$.  
\end{remark}

The next two propositions, which mirror \cite[Proposition~3.15, Corollary 3.16 and 3.17]{choulli.al.98}, are reminiscent of the Girsanov theorem for absolutely continuous measure changes; the original proofs in \cite{choulli.al.98} still work for our generalizations.
\begin{proposition}\label{P:elocalmart}
For a semimartingale $Y$ and an $\Exp$-density $\Exp(N)$, the following are equivalent.
\begin{enumerate}[(i)]
\item\label{elocalmart.i} $Y$ is an $\Exp(N)$-local martingale.
\item\label{elocalmart.ii} $Y+[Y,N]$ is a $P$-local martingale. 
\end{enumerate}
Furthermore, if either of the conditions holds and $Y$ is special (with the local martingale part $M^Y$), then 
$$  Y = Y_0 + M^Y -\langle M^Y,N\rangle.$$
\end{proposition}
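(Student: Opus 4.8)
The plan is to run the classical Girsanov-type argument locally on each restart interval $\rrbracket T_m,T_{m+1}\rrbracket$ determined by the zeros of the stochastic exponential, and then patch across $m$. Throughout I write $Z^{(m)}=\TmE(N)=\Exp(N-N^{T_m})$, which by the construction recalled in Remark~\ref{R:Tm} is a nonnegative $P$-local martingale, strictly positive on $\llbracket T_m,T_{m+1}\llbracket$ with $Z^{(m)}_{T_{m+1}}=0$; since a stochastic exponential can reach zero only by a jump, its left limit $Z^{(m)}_-$ stays nonzero on all of $\rrbracket T_m,T_{m+1}\rrbracket$. The governing equation $dZ^{(m)}=Z^{(m)}_-\,d(N-N^{T_m})$ reduces to $dZ^{(m)}=Z^{(m)}_-\,dN$ on $\rrbracket T_m,\cdot\,\rrbracket$.

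The core computation is integration by parts. Writing $W={}^{T_m}Y=Y-Y^{T_m}$ for $Y$ restarted at $T_m$, one has $dW=dY$ and $d[W,N]=d[Y,N]$ on $\rrbracket T_m,\cdot\,\rrbracket$, and therefore
\begin{equation*}
d\big(WZ^{(m)}\big)=W_-\,dZ^{(m)}+Z^{(m)}_-\,dW+d[W,Z^{(m)}]=W_-\,dZ^{(m)}+Z^{(m)}_-\,d\big(Y+[Y,N]\big),
\end{equation*}
where I used $d[W,Z^{(m)}]=Z^{(m)}_-\,d[W,N]$. Because $Z^{(m)}$ is a $P$-local martingale, $\int W_-\,dZ^{(m)}$ is a $P$-local martingale, so $WZ^{(m)}$ is a $P$-local martingale on $\rrbracket T_m,T_{m+1}\rrbracket$ if and only if $\int Z^{(m)}_-\,d(Y+[Y,N])$ is. Since $Z^{(m)}_-$ is nonzero on $\rrbracket T_m,T_{m+1}\rrbracket$ and $1/Z^{(m)}_-$ is locally bounded there, I can integrate back with $1/Z^{(m)}_-$ to conclude that this in turn holds if and only if $Y+[Y,N]$ is a $P$-local martingale on $\rrbracket T_m,T_{m+1}\rrbracket$.

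It remains to connect $WZ^{(m)}={}^{T_m}Y\,\TmE$ with the $\Exp(N)$-(local) martingale property and to patch across $m$. Here I would invoke Remark~\ref{R:Tm}: $Y$ is an $\Exp(N)$-martingale precisely when, for suitable $\F_{T_m}$-measurable localizers $K_m>0$, each ${}^{T_m}Y\,K_m\,\TmE$ is a genuine $P$-martingale; applying this to the members $Y^{\tau_n}$ of a localizing sequence translates the $\Exp(N)$-local martingale property into the $P$-local martingale property of the products ${}^{T_m}Y\,\TmE$ on the intervals $\rrbracket T_m,T_{m+1}\rrbracket$. Combined with the previous paragraph, this yields the equivalence of \ref{elocalmart.i} and \ref{elocalmart.ii} interval by interval; since $T_m\uparrow\infty$ and $Y+[Y,N]$ is a single semimartingale, being a $P$-local martingale on each $\rrbracket T_m,T_{m+1}\rrbracket$ is equivalent to being a $P$-local martingale on $\rrbracket 0,\infty\llbracket$ by the usual pasting of local martingales. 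I expect the main obstacle to be exactly this bookkeeping at the restart times: arranging the integrability localizers $K_m$ from Remark~\ref{R:Tm} and the invertibility of $Z^{(m)}_-$ near $T_{m+1}$ simultaneously, and verifying that the local martingale property glues across the zeros of $\Exp(N)$ without gaps.

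For the final identity, suppose $Y$ is special with $Y=Y_0+M^Y+B^Y$ and that \ref{elocalmart.ii} holds. Since $N$ is a $P$-local martingale (being the stochastic logarithm of the $\Exp$-density $\Exp(N)$ on each restart interval), the predictable covariation $\langle B^Y,N\rangle$ vanishes: the jumps of $B^Y$ occur at predictable times, where the jumps of the local martingale $N$ have zero conditional expectation. Hence $[Y,N]-\langle M^Y,N\rangle=\big([M^Y,N]-\langle M^Y,N\rangle\big)+[B^Y,N]$ is a $P$-local martingale, so $M^Y+[Y,N]-\langle M^Y,N\rangle$ is too. Subtracting it from $Y+[Y,N]-Y_0$ leaves $B^Y+\langle M^Y,N\rangle$, which is predictable, of finite variation, and null at zero; being also a local martingale it must vanish, giving $B^Y=-\langle M^Y,N\rangle$ and hence $Y=Y_0+M^Y-\langle M^Y,N\rangle$.
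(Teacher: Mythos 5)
Your strategy is in fact the paper's: the paper proves Proposition~\ref{P:elocalmart} by deferring to Choulli--Krawczyk--Stricker and observing that their Girsanov-type proof survives the weaker integrability built into Definition~\ref{D:Emart}, and that proof is exactly your integration by parts on the restart intervals followed by inverting $\TmE(N)_-$ and pasting over $m$. The core computation and the local boundedness of $1/\TmE(N)_-$ on $\rrbracket T_m,T_{m+1}\rrbracket$ are fine, but two supporting claims are wrong as stated. First, the $\Exp$-densities of this paper are \emph{signed} --- that is the point of the whole LOP analysis --- so $\TmE(N)$ is neither nonnegative nor strictly positive before $T_{m+1}$; what you actually need, and what is true, is only that it is nonvanishing on $\llbracket T_m,T_{m+1}\llbracket$ and jumps to zero at $T_{m+1}$, whence $\TmE(N)_-\neq0$ on $\rrbracket T_m,T_{m+1}\rrbracket$. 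Second, the $P$-local martingale property of $\TmE(N)$ is not contained in Remark~\ref{R:Tm}; it must be derived from Definition~\ref{D:Edens}: combining Yor's formula with the $\Exp$-density property at $\sigma$ gives $\E[\TmE(N)_T\,|\,\F_\sigma]=\TmE(N)_\sigma$ as a generalized conditional expectation for every $\sigma\in\T$, and a generalized martingale is a local martingale under the paper's localization convention $X^{\tau_n}\one_{\{\tau_n>0\}}$.

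The genuine gap is the step you defer as ``bookkeeping'': in this generalized setting it is not bookkeeping but the substance of the proposition. For \ref{elocalmart.ii}$\,\Rightarrow\,$\ref{elocalmart.i}, your integration by parts delivers that ${}^{T_m}(Y^{\rho})\,\TmE(N)$ is a $P$-local martingale for every stopping time $\rho$; but Definition~\ref{D:Emart} combined with Remark~\ref{R:Tm} requires a localizing sequence $(\tau_n)$ \emph{together with} positive $\F_{T_m}$-measurable weights $K_m$ such that ${}^{T_m}(Y^{\tau_n})\,K_m\,\TmE(N)$ is a \emph{true} $P$-martingale, where $\TmE(N)$ is \emph{not} stopped at $\tau_n$, so stopping the product does not produce this process. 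The decomposition ${}^{T_m}(Y^{\tau_n})\,\TmE(N)=\big({}^{T_m}Y\,\TmE(N)\big)^{\tau_n}+({}^{T_m}Y)_{\tau_n}\big(\TmE(N)-\TmE(N)^{\tau_n}\big)$ isolates the difficulty: the first summand is handled by choosing $\tau_n$ from a localizing sequence of the product, but the second is only a generalized martingale, and promoting it to a true one requires simultaneously controlling $({}^{T_m}Y)_{\tau_n}$ --- which contains the a priori unbounded jump $\Delta Y_{\tau_n}$ --- and choosing $K_m$ from the conditional integrability of $\TmE(N)_T$ given $\F_{T_m}$. In CKS's original setting (square-integrable $Y$, reverse H\"older density) one takes $K_m\equiv1$ and this issue disappears; verifying it under the present milder hypotheses is precisely what the paper's phrase ``the original proofs still work for our generalizations'' asks for, so your proof of the equivalence is incomplete at its crux. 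A smaller gap of the same nature sits in your final paragraph: you use $\langle M^Y,N\rangle$ without establishing its existence (the predictable covariation of two local martingales need not exist in general). It does exist here, because your own identities show $[M^Y,N]+B^Y$ to be a finite-variation local martingale, hence $[M^Y,N]$ is special and of locally integrable variation; that observation must precede any use of $\langle M^Y,N\rangle$.
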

\begin{proposition}\label{P:emart}
Assume $\Exp(N)$ is an $\Exp$-density and $Y$ is an $\Exp(N)$-local martingale. Consider the sequence of stopping time $(T_m)_{m=0}^\infty$ from Remark~\ref{R:Tm}. If there is a sequence of positive $\F_{T_m}$-measurable random variables $(K_m)_{m=0}^\infty$ such that $\E\big[K_mY^*_T\,(\TmE(N))^*_T\big]<\infty$ for all $m\in\N\cup\{0\}$, then $Y$ is an $\Exp(N)$-martingale.
\end{proposition}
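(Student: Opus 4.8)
The plan is to carry out the classical upgrade of a dominated local martingale to a genuine martingale, transcribed into the multiplicative $\Exp$-setting and localised along the restart times $(T_m)$. What has to be shown is the single identity $Y_\tau=\E[\tauE(N)_TY_T\,|\,\F_\tau]$ for every $\tau\in\T$. Since $Y$ is an $\Exp(N)$-local martingale, fix a localizing sequence $(\rho_j)_{j\in\N}$ with $\rho_j\uparrow\infty$ such that each $Y^{\rho_j}$ is an $\Exp(N)$-martingale; by Definition~\ref{D:Emart} this reads
\[
Y_{\rho_j\wedge\tau}=\E\big[\,\tauE(N)_T\,Y_{\rho_j\wedge T}\,\big|\,\F_\tau\big],\qquad \tau\in\T,\ j\in\N .
\]
Because $\rho_j\uparrow\infty$ on the finite horizon $[0,T]$, one has $\rho_j\wedge\tau=\tau$ and $\rho_j\wedge T=T$ for all large $j$ almost surely, so the left-hand side converges a.s.\ to $Y_\tau$; the entire difficulty is to pass to the limit on the right.

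The hypothesis controls the restart values at the fixed times $T_m$, whereas the martingale identity is needed at an arbitrary $\tau$. To bridge this I localise on the events $A_m:=\{T_m\le\tau<T_{m+1}\}\in\F_\tau$, which partition $\Omega$ since $T_m\uparrow\infty$ and $\tau\le T$. On $A_m$ the restarted exponential $\TmE(N)$ is strictly positive up to time $\tau$, so the multiplicative restarting property of stochastic exponentials yields $\TmE(N)_T=\TmE(N)_\tau\,\tauE(N)_T$ (the identity persisting even when $\TmE(N)$ is later absorbed at zero, as both exponentials then vanish simultaneously). Multiplying the displayed identity by the $\F_\tau$-measurable factor $\TmE(N)_\tau$ converts it into
\[
Y_{\rho_j\wedge\tau}\,\TmE(N)_\tau=\E\big[\,\TmE(N)_T\,Y_{\rho_j\wedge T}\,\big|\,\F_\tau\big]\qquad\text{on }A_m .
\]

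Now the integrand is dominated, uniformly in $j$, by $\big|\TmE(N)_T\,Y_{\rho_j\wedge T}\big|\le (\TmE(N))^*_T\,Y^*_T$, and because $K_m>0$ is $\F_{T_m}$-measurable with $T_m\le\tau$ on $A_m$, the assumption $\E[K_mY^*_T(\TmE(N))^*_T]<\infty$ gives $\E[(\TmE(N))^*_T\,Y^*_T\,|\,\F_\tau]<\infty$ a.s.\ on $A_m$. Conditional dominated convergence then lets us send $j\to\infty$ in the last display, producing $Y_\tau\,\TmE(N)_\tau=\E[\TmE(N)_TY_T\,|\,\F_\tau]$ on $A_m$; dividing by $\TmE(N)_\tau>0$ and reinserting $\TmE(N)_T=\TmE(N)_\tau\tauE(N)_T$ recovers $Y_\tau=\E[\tauE(N)_TY_T\,|\,\F_\tau]$ on $A_m$. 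Since the $A_m$ exhaust $\Omega$, the $\Exp(N)$-martingale identity holds everywhere. Equivalently, one may phrase the outcome through Remark~\ref{R:Tm}(ii): the very same domination shows that ${}^{T_m}Y\,K_m\,\TmE$ is not merely a local but a true $P$-martingale, while $\E[|Y_{T_m}K_m\TmE(N)_{T_{m+1}}|]<\infty$ follows by bounding $|Y_{T_m}|\le Y^*_T$ and $|\TmE(N)_{T_{m+1}}|\le(\TmE(N))^*_T$.

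I expect the main obstacle to be exactly this mismatch between the data, supplied at the restart times $T_m$, and the conclusion, demanded at all $\tau$: it forces the decomposition along $A_m$ together with the restarting identity $\TmE(N)_T=\TmE(N)_\tau\tauE(N)_T$, whose legitimacy rests on the strict positivity of $\TmE(N)$ before $T_{m+1}$. A secondary nuisance is the measurability bookkeeping needed to pull $K_m$ out of the conditional expectation on $A_m$, i.e.\ that $\F_{T_m}\cap\{T_m\le\tau\}\subseteq\F_\tau$ under the usual conditions. This is precisely the route of~\cite{choulli.al.98}, which, as remarked there, carries over unchanged to the present weaker integrability hypothesis.
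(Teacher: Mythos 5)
Your proof is correct, but it is organised differently from the paper's. The paper gives no self-contained argument: it appeals to Choulli--Krawczyk--Stricker, whose route passes through the $P$-martingale characterization of $\Exp(N)$-martingales recorded in Remark~\ref{R:Tm}(ii) --- one shows that each ${}^{T_m}\!Y K_m\,\TmE(N)$ is a $P$-local martingale dominated by the integrable random variable $2K_m Y^*_T\,(\TmE(N))^*_T$, invokes the classical fact that a local martingale dominated by an integrable random variable is a uniformly integrable martingale, and then translates back through the equivalence of Remark~\ref{R:Tm}. You instead verify the stopping-time definition (Definition~\ref{D:Emart}) directly: partition $\Omega$ by $A_m=\{T_m\le\tau<T_{m+1}\}$, use the multiplicative restart identity $\TmE(N)_T=\TmE(N)_\tau\,\tauE(N)_T$ on $\{T_m\le\tau\}$, and pass to the limit along the $\Exp$-localizing sequence by conditional dominated convergence. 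The two arguments share all essential ingredients (localization along $(T_m)$, the domination hypothesis, dominated convergence), but yours buys self-containedness --- in particular it does not presuppose the equivalence stated without proof in Remark~\ref{R:Tm}, whose (ii)\,$\Rightarrow$\,(i) direction your partition-and-restart manipulations effectively re-prove --- at the cost of carrying the generalized-conditional-expectation bookkeeping (the choice of positive $\F_\tau$-measurable weights, the pull-out property, conditional dominated convergence) explicitly. One slip worth fixing: $\TmE(N)$ need not be \emph{strictly positive} before $T_{m+1}$, since a jump $\Delta N<-1$ makes the stochastic exponential change sign; it is merely nonzero there, which is all that your division by $\TmE(N)_\tau$ and the restart identity actually require. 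A second, cosmetic point is that the paper's localization convention involves the indicator $\one_{\{\rho_j>0\}}$, which your argument absorbs without change.
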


To assist the reader, we conclude this subsection by linking $\Exp$-martingales and $\Exp$-densities to equivalent measures and their densities.
\begin{remark}\label{R:231215}
For a positive $\Exp$-density $\Exp(N)$, the following are equivalent. 
\begin{enumerate}[(i)]
\item $Y$ is a $Q$-martingale for the equivalent measure $Q$ given by $\frac{dQ}{dP}=\Exp(N)_T$.
\item $Y$ is an $\Exp(N)$-martingale and $Y_0$ is integrable.
\end{enumerate}
Thus for an $\Exp$-density $\Exp(N)>0$, an $\Exp(N)$-martingale coincides with the notion of `generalised $Q$-martingale' in Dellacherie and Meyer \cite[p.~3]{dellacherie.meyer.82}.  
\end{remark}


\section{Main results and counterexamples}\label{S:3}


\subsection{Equivalent characterizations of LOP}\label{SS:3.1}
We first recall an important concept from the quadratic hedging literature.
\begin{definition}\label{D:L}
The process $L=(L_t)_{0\leq t\leq T}$ given by
\begin{equation}\label{L}
L_t:=\essinf_{\vt\in\Theta_t}\E\left[(1-\vt\sint S_T)^2\,|\,\F_t\right],\qquad 0\leq t\leq T,
\end{equation}
is called the \emph{opportunity process}.  
\end{definition}

\begin{theorem}\label{T:M1}
For a locally square-integrable semimartingale $S$, the following are equivalent.
\begin{enumerate}[(i)]
\item\label{MT1.1} The law of one price holds for the price process $S$ (Definition~\ref{D:LOP1S}).
\item\label{MT1.2} $S$ admits a compatible price system satisfying LOP (Definition~\ref{D:PSlop}). 
\item\label{MT1.3} $S$ admits a compatible state price density satisfying LOP (Definition~\ref{D:SPDlop}). 
\item\label{MT1.4} There exists a semimartingale $N$ such that $S$ is an $\Exp(N)$-local martingale and $\Exp(N)$ is a square-integrable $\Exp$-density (Definitions~\ref{D:Edens} and \ref{D:Emart}).
\item\label{MT1.5} The opportunity process $L$ and its left limit $L_{-}$ are positive (Definition~\ref{D:L}).
\end{enumerate}
Furthermore, if any of the above conditions holds, then 
\begin{enumerate}[resume*]
\item\label{MT1.6} The subspace $\{\vt\sint S_T:\vt\in\TB\}$ is closed in $L^2$. Hence, a unique (up to a null strategy) solution of \eqref{MVH1} exists in $\TB$.
\item\label{MT1.7} For every $\tau\in\T$, the set $\{\vt\sint S_T:\vt\in\TBtau\}$ is closed in $L^2$.
\item\label{MT1.8} For every predictable stopping time $\sigma\in\T$ with announcing sequence $(\sigma_n)_{n=1}^\infty$ one has
\begin{equation*}
\{\vt\sint S_T\,:\,\vt\in\TB_{\sigma-}\} = \cap_{n=1}^\infty\{\vt\sint S_T\,:\,\vt\in\TB_{\sigma_n}\}.
\end{equation*}
\end{enumerate}
\end{theorem}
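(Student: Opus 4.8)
The plan is to treat (ii), (iii) and (iv) as three encodings of the same dual object — a compatible family of state price densities — and to locate the substantive ``fundamental theorem'' content in the bridge connecting them to the primal conditions (i) and (v). Concretely, I would close the cycle
\[
\text{(iv)}\Rightarrow\text{(i)}\Rightarrow\text{(v)}\Rightarrow\text{(iii)}\Rightarrow\text{(iv)},\qquad\text{(ii)}\Leftrightarrow\text{(iii)},
\]
and then read off (vi)--(viii) from the objects constructed along the way.

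\emph{Dictionary steps and the easy bridge.} For (ii)$\Leftrightarrow$(iii) I would use the conditional Riesz representation of Hansen and Richard to pass between a conditionally linear continuous functional $p_\tau$ and its density $\tauZ_T$, matching the axioms of Definition~\ref{D:PSlop} one-to-one with \ref{spd.1}--\ref{spd.4}: bond pricing $p_\tau(1)=1$ with $\tauZ_\tau=1$; time consistency $p_\sigma\circ p_\tau=p_\sigma$ with the multiplicative cocycle $\sigZ_T=\sigZ_\tau\tauZ_T$; continuity with conditional square-integrability; and the left-limit property \ref{D:PSlop:4} with the uniform bound in \ref{spd.4}. For (iii)$\Rightarrow$(iv) the cocycle property is exactly what Proposition~\ref{P:sopZ} converts into a single stochastic exponential $\tauZ=\tauE(N)$, whereupon spd.1 and spd.3 say that $\Exp(N)$ is a square-integrable $\Exp$-density and compatibility with $S$ becomes, via Definition~\ref{D:Emart} and Propositions~\ref{P:elocalmart}--\ref{P:emart}, the statement that $S$ is an $\Exp(N)$-local martingale; the converse (iv)$\Rightarrow$(iii) simply sets $\tauZ_T=\tauE(N)_T$ and verifies the conditions. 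Finally (iv)$\Rightarrow$(i): compatibility forces $\E[{}^\tau\!Z_T\,\vt\sint S_T\mid\F_\tau]=0$ for $\vt\in\Theta_\tau$, so applying $\E[{}^\tau\!Z_T\,(\cdot)\mid\F_\tau]$ to a sequence $x_\tau+\vt^n\one_{\rrbracket\tau,T\rrbracket}\sint S_T\to 0$ in $L^2$ and invoking Cauchy--Schwarz with $\E[({}^\tau\!Z_T)^2\mid\F_\tau]<\infty$ yields $x_\tau=0$; the predictable case \ref{def:lop:2} is identical along an announcing sequence, with \ref{spd.4} keeping the limit alive.

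\emph{The hard direction.} The crux is (i)$\Rightarrow$(v)$\Rightarrow$(iii). For (i)$\Rightarrow$(v) I argue by contraposition: were $L_t$ (resp. $L_{\sigma-}$) to vanish on a set of positive measure, a normalised minimising sequence drawn from $\Theta_t$ (resp. from the announcing times of $\sigma$) would produce endowments contradicting \ref{def:lop:1} (resp. \ref{def:lop:2}). The decisive step is (v)$\Rightarrow$(iii), which I would prove by constructing the variance-optimal density directly. The obstacle is that \emph{constructing} a density requires first upgrading the pointwise non-degeneracy $L,L_->0$ to genuine $L^2$-closedness of the trading-gains subspaces. This is precisely the role of the novel uniform boundedness principle (Proposition~\ref{P:UBP}): it turns the mere single-valuedness supplied by LOP into the $L^2$-continuity \ref{D:PSlop:3b} that single-valuedness alone does not give. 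With closedness in hand, the conditional $L^2$-projection of the constant $1$ onto $\cl\{\vt\sint S_T:\vt\in\Theta_t\}$ exists, and normalising the residual by $L_t$ delivers ${}^t\!Z_T$; its first-order orthogonality conditions give spd.1--spd.3, while \ref{spd.4} follows from $L_-$-positivity. I expect the passage from single-valuedness to continuity through Proposition~\ref{P:UBP}, and the attendant control of left limits at predictable times, to be the main obstacle.

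\emph{Consequences.} The closedness statements (vi) and (vii) emerge from the projection/uniform-boundedness argument (in the spirit of Choulli--Krawczyk--Stricker), and the unique minimiser of \eqref{MVH1} is then obtained as the $L^2$-projection of $H$ onto the resulting closed subspace $\{v+\vt\sint S_T:v\in\R,\,\vt\in\TB\}$. The genuinely new identity is (viii): the inclusion $\subseteq$ is immediate since $\TB_{\sigma-}\subseteq\TB_{\sigma_n}$ for every announcing $\sigma_n$, whereas the reverse inclusion forces one to reconstruct a single ``trading from just before $\sigma$'' strategy from a family $\vt^n\in\TB_{\sigma_n}$ sharing a common terminal value; I would control each $\vt^n\sint S_T$ by (vii) and then use the left-limit structure (condition \ref{D:PSlop:4}/\ref{spd.4} and Remark~\ref{rem:uniqueness}) to pass to the limit. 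Reconciling this temporal approximation across announcing times with one predictable-time strategy is the second substantial obstacle.
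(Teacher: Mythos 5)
Your overall architecture is sound and close to the paper's: the paper closes the loop as \ref{MT1.1}$\Rightarrow$\ref{MT1.5}$\Rightarrow$\ref{MT1.4}$\Rightarrow$\ref{MT1.3}$\Rightarrow$\ref{MT1.2}$\Rightarrow$\ref{MT1.1}, and your reordering, as well as your direct Cauchy--Schwarz proof of \ref{MT1.4}$\Rightarrow$\ref{MT1.1}, are unobjectionable. The genuine gap sits in your ``decisive step'' \ref{MT1.5}$\Rightarrow$\ref{MT1.3}, where you misidentify the role of Proposition~\ref{P:UBP}. That proposition is not a closedness result, and closedness is not a prerequisite for constructing the density: one projects $1$ onto the $L^2$-\emph{closure} $\cl\{\vt\sint S_T:\vt\in\Theta_\tau\}$ (a closed subspace, so the projection $\hat{G}^{(\tau)}_T$ always exists), obtains $L_\tau=\E[(1-\hat{G}^{(\tau)}_T)^2\,|\,\F_\tau]$ as in \eqref{eq:rep:op:2}, and uses $L>0$ only to normalise, $\tauhZ_T=(1-\hat{G}^{(\tau)}_T)/L_\tau$ as in \eqref{eq:tauhZ}; then $L_->0$ yields property \ref{spd.4} of Definition~\ref{D:SPDlop} because $\E[({}^{\sigma_n}\hat{Z}_T)^2\,|\,\F_{\sigma_n}]=1/L_{\sigma_n}\to 1/L_{\sigma-}$. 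The actual job of Proposition~\ref{P:UBP} in the paper is precisely the item of your ``dictionary'' that you declared automatic: the equivalence of the left-limit property \ref{D:PSlop:4} of Definition~\ref{D:PSlop} with the uniform bound \ref{spd.4} of Definition~\ref{D:SPDlop}. This is not a consequence of the conditional Riesz representation; it is the hard implication \ref{ubp1}$\Rightarrow$\ref{ubp3} of Proposition~\ref{P:UBP} (a conditional gliding-hump argument), and it also powers the predictable-time half of \ref{MT1.2}$\Rightarrow$\ref{MT1.1}.

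Second, within \ref{MT1.5}$\Rightarrow$\ref{MT1.3} your assertion that ``first-order orthogonality conditions give spd.1--spd.3'' skips the crux of the construction: time consistency, property \ref{spd.2} of Definition~\ref{D:SPDlop}. Orthogonality at a fixed $\tau$ gives \ref{spd.1} and \ref{spd.3}, and compatibility with $S$ via the martingale property of Lemma~\ref{lem:mart}, but \ref{spd.2} requires the multiplicative pasting identity $(1-\hat{G}^{(\tau)}_T)=(1-\hat{G}^{(\tau)}_\sigma)(1-\hat{G}^{(\sigma)}_T)$ across stopping times $\sigma\geq\tau$, i.e., Lemma~\ref{lem:mult}, whose proof needs Mazur's lemma (to extract convergent intermediate wealths) and Egorov's theorem with a diagonal pasting of strategies; it does not follow from orthogonality at a single stopping time. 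Without \ref{spd.2} you cannot invoke Proposition~\ref{P:sopZ} to obtain the single local martingale $N$ required in \ref{MT1.4}. Two smaller corrections: the closedness statements \ref{MT1.6} and \ref{MT1.7} do not ``emerge from the projection/uniform-boundedness argument'' but follow from \ref{MT1.4} together with \cite[Theorem~2.16]{czichowsky.schweizer.13}, applied to $S$ and to $\tilde S=\one_{\rrbracket\tau,T\rrbracket}\sint S$; and the reverse inclusion in \ref{MT1.8} is much easier than you anticipate: under LOP the wealth process attached to a common terminal value is unique (Remark~\ref{rem:uniqueness}), it vanishes on every $\llbracket0,\sigma_n\rrbracket$, and then $(\one_{\llbracket0,\sigma\llbracket}\vt)\sint S=\lim_{n\to\infty}(\one_{\llbracket0,\sigma_n\rrbracket}\vt)\sint S=0$ in the semimartingale topology, so the integrand can be taken to vanish on $\llbracket0,\sigma\llbracket$ directly --- no limit of terminal values and no appeal to \ref{MT1.7} is needed.
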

 In practice, the criterion \ref{MT1.4} is easily verified if there is an equivalent martingale measure with square-integrable density (Remark~\ref{R:231215}). Failing that, the easiest criterion to check is the non-negativity of the opportunity process and its left limit \ref{MT1.5}. Theorem~\ref{vt} below simplifies the verification procedure further in concrete models by dispensing with the need to find the actual opportunity process in favour of an easier task of identifying just a candidate opportunity process. 

Apart from the novel definition of LOP and its link to $\Exp$-martingales, the key improvement in Theorem~\ref{T:M1} compared to Czichowsky and Schweizer \cite[Theorem~6.2]{czichowsky.schweizer.13} is that \emph{a priori} one does not need  the solutions  of  
\begin{equation}\label{A}
\min_{W\in\,\cl\mkern2mu\{\vt\sint S_T\,:\,\vt\in\Theta_\tau\}}\E\big[ (1-W)^2\,|\,\F_\tau \big],\qquad \tau\in\T,
\end{equation}
to be realized by trading strategies in $\TBtau$. Instead, with a further argument, one obtains that the set $\{\vt\sint S_T:\vt\in\TBtau\}$ is closed \emph{a posteriori}, purely on the strength of any of the items \ref{MT1.1} to \ref{MT1.5}. Observe that the conditions \ref{MT1.1}--\ref{MT1.5} of Theorem \ref{T:M1} are not necessary for the $L^2$-closedness of $\{\vt\sint S_T:\vt\in\TBtau\}$. This can be seen in finite discrete time from the results of Melnikov and Nechaev \cite{melnikov.nechaev.99}. In continuous time, a further counterexample appears in Delbaen et al. \cite[Example~6.4]{delbaen.al.97}. In  this example, $\{\vt\sint S_T:\vt\in\TB\}=L^2$ and $\F_0$ is trivial. Therefore, $1\in\{\vt\sint S_T:\vt\in\TB\}$ and hence $L_0=0$.
Thus, if \ref{MT1.1}--\ref{MT1.5} of Theorem \ref{T:M1} fail, no firm conclusions on the closedness of $\{\vt\sint S_T:\vt\in\TBtau\}$ can be drawn.

\begin{remark}[Variance-optimal state price density]\label{rem:vo}
Because the law of one price for wealth transfers between 0 and $T$ does not imply LOP on subintervals (e.g., $L_0>0$ does not automatically yield $L_t>0$ for $t>0$), the proof of Theorem \ref{T:M1} is not based, unlike other variants of ``Fundamental Theorems of Asset Pricing'',  on an application of a separating hyperplane theorem. Rather, the proof constructs via Lemma~\ref{lem:hit0} a specific family of state price densities $\{\tauhZ_T\}_{\tau\in\T}$ whose elements $\tauhZ_T$ are characterised by having the smallest conditional second moment. Because one has $\E[\tauZ_T\,|\,\F_\tau]=1$ for all $\tau\in\T$ and all families $\{\tauZ_T\}_{\tau\in\T}$ of state price densities by Definition~\ref{D:SPDlop}\ref{spd.1}, the elements of the minimal family  $\{\tauhZ_T\}_{\tau\in\T}$ then also have the smallest conditional variance $\Var(\tauhZ_T\,|\,\F_\tau)$ among all compatible state price densities satisfying LOP.  The family $\{\tauhZ_T\}_{\tau\in\T}$ is in this sense \emph{variance optimal}; cf. Schweizer~\cite[p.~210]{schweizer.96}. In the same vein, for $N$ such that $\tauhZ_T=\tauE(N)_T$, $\tau\in\T$, one may speak of the \emph{variance-optimal $\Exp$-density}.  
\end{remark}
It is shown in Theorem \ref{T:M2}\ref{T:main.2} and \eqref{eq:tauhZ} below that, under LOP for $S$, the variance-optimal $\Exp$-density has the form
$$\tauhZ=\frac{\tauE(-a\sint S)L}{L^\tau}=\tauE(-a\sint S+\Log(L)-[a\sint S,\Log(L)])$$
for some $a\in L(S)$. For continuous $S$, this yields ${}^0\!\hat{Z}>0$ and hence that $S$ admits an equivalent local martingale measure with square-integrable density. Thus, for continuous $S$, LOP is equivalent to the condition of \emph{no `$L^2$ free lunch'} by Theorems 2 and 3 in Stricker \cite{stricker.90}.
\begin{proposition}\label{prop:VOMM}
For a continuous price process $S$, the following are equivalent.
\begin{enumerate}[(i)]
\item\label{VOMM.i} $S$ satisfies the law of one price.
\item\label{VOMM.ii} $S$ admits an equivalent local martingale measure with square-integrable density.
\item\label{VOMM.iii} The variance-optimal (signed) local martingale measure for $S$ exists and is positive. 
\item\label{VOMM.iv} There is no `$L^2$ free lunch', i.e.,
$$\cl\big(\{\vt\sint S_T~:~\vt\in\Theta\}-L^2_+\big)\cap L^2_+=\{0\}.$$
\item \label{VOMM.v} There is no arbitrage on the $L^2$ closure of simple strategies, i.e.,
$$\cl\{\vt\sint S_T~:~\vt\in\Theta\}\cap L^2_+=\{0\}.$$
\end{enumerate}
\end{proposition}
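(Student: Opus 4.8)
The plan is to make the equivalence \ref{VOMM.i} $\Leftrightarrow$ \ref{VOMM.ii} the core of the argument and to attach the remaining items to it. The implication \ref{VOMM.ii} $\Rightarrow$ \ref{VOMM.i} needs no continuity: given an equivalent local martingale measure $Q$ with square-integrable density, let $Z$ be its density process and set $N:=\Log(Z)$, which is well defined since $Z>0$. Then $\Exp(N)=Z$ is a positive $\Exp$-density, and it is square integrable because $\E[\,\tauE(N)_T^2\,|\,\F_\tau]=Z_\tau^{-2}\,\E[Z_T^2\,|\,\F_\tau]<\infty$, using $Z_T=dQ/dP\in L^2$ and $Z_\tau>0$. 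By Remark~\ref{R:231215}, the $Q$-local martingale property of $S$ is exactly the $\Exp(N)$-local martingale property, so condition~\ref{MT1.4} of Theorem~\ref{T:M1} holds and LOP follows.

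For the converse \ref{VOMM.i} $\Rightarrow$ \ref{VOMM.ii}, which is where continuity is indispensable, I would first apply Theorem~\ref{T:M1} to pass from LOP to positivity of the opportunity process $L$ and of $L_-$, and then invoke the explicit representation $\tauhZ=\tauE\big(-a\sint S+\Log(L)-[a\sint S,\Log(L)]\big)$ of the variance-optimal $\Exp$-density recorded above. The decisive point is that for continuous $S$ the integral $a\sint S$ is continuous, so $\Delta[a\sint S,\Log(L)]=\Delta(a\sint S)\,\Delta\Log(L)=0$ and the exponent $N:=-a\sint S+\Log(L)-[a\sint S,\Log(L)]$ inherits only the jumps of $\Log(L)$, namely $\Delta N=\Delta L/L_-$; since $L>0$ and $L_->0$ these obey $\Delta N>-1$. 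Consequently the stochastic exponential ${}^0\hat Z=\Exp(N)$ is strictly positive, and as $\Exp(N)$ is a square-integrable $\Exp$-density one gets $\E[{}^0\hat Z_T]=1$ and ${}^0\hat Z_T\in L^2$. Setting $dQ/dP={}^0\hat Z_T>0$ then produces an equivalent probability with square-integrable density, while Proposition~\ref{P:elocalmart} and Remark~\ref{R:231215} upgrade the $\Exp(N)$-local martingale property of $S$ to a $Q$-local martingale property, which is \ref{VOMM.ii}.

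The remaining items attach as follows. Items \ref{VOMM.iv} and \ref{VOMM.v} I would deduce from \ref{VOMM.ii} by citing Stricker~\cite[Theorems~2 and 3]{stricker.90}, according to which, for continuous $S$, the existence of an equivalent local martingale measure with square-integrable density is equivalent to the absence of an $L^2$ free lunch \ref{VOMM.iv}; the step \ref{VOMM.iv}$\Rightarrow$\ref{VOMM.v} is immediate from $\{\vt\sint S_T:\vt\in\Theta\}\subseteq\{\vt\sint S_T:\vt\in\Theta\}-L^2_+$, and \ref{VOMM.v}$\Rightarrow$\ref{VOMM.ii} is again Stricker's separation result. For \ref{VOMM.iii}, the measure obtained in \ref{VOMM.ii} shows that the class of signed local martingale measures with square-integrable density is non-empty, so the $L^2$-norm minimizer over this closed convex set exists; by Remark~\ref{rem:vo} its density is the member ${}^0\hat Z_T$ of the variance-optimal $\Exp$-density at $\tau=0$, which the previous paragraph shows to be strictly positive, giving \ref{VOMM.iii}. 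The reverse \ref{VOMM.iii}$\Rightarrow$\ref{VOMM.ii} is trivial, since a positive signed local martingale measure with square-integrable density is an equivalent local martingale measure of the required kind.

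I expect the main obstacle to be the implication \ref{VOMM.i}$\Rightarrow$\ref{VOMM.ii}: everything turns on showing that continuity forces the variance-optimal $\Exp$-density ${}^0\hat Z$ to be strictly positive rather than merely signed. This is precisely the step that fails in the presence of jumps, and it must be handled through the explicit form of $\hat Z$ together with the jump bound $\Delta\Log(L)=\Delta L/L_->-1$; in particular one has to verify carefully that the covariation term $[a\sint S,\Log(L)]$ is continuous when $S$ is, so that $\Delta N=\Delta\Log(L)$ as claimed and no jump of size $-1$ can arise to annihilate ${}^0\hat Z$.
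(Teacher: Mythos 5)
Your proposal is correct and takes essentially the same route as the paper, which proves the proposition via the discussion immediately preceding it: under LOP the variance-optimal $\Exp$-density $\tauhZ=\tauE(-a\sint S)L/L^\tau=\tauE(-a\sint S+\Log(L)-[a\sint S,\Log(L)])$ exists, its positivity for continuous $S$ yields the equivalent local martingale measure in \ref{VOMM.ii} (and hence \ref{VOMM.iii}), items \ref{VOMM.iv}--\ref{VOMM.v} are settled by Stricker \cite[Theorems~2 and~3]{stricker.90}, and \ref{VOMM.ii}$\Rightarrow$\ref{VOMM.i} follows from Theorem~\ref{T:M1}\ref{MT1.4} together with Remark~\ref{R:231215}. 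The only cosmetic difference is that the paper reads positivity directly off the product form $\tauE(-a\sint S)L/L^\tau$ (stochastic exponential of a continuous integral times the positive process $L$, with $L>0$, $L_->0$), whereas you establish it through the jump bound $\Delta N=\Delta L/L_{-}>-1$ in the exponent form; both verifications are valid.
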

This recovers and extends the celebrated result of Delbaen and Schachermayer \cite{delbaen.schachermayer.96.bej}, i.e., the equivalence of \ref{VOMM.ii} and \ref{VOMM.iii}. Observe that in \cite{delbaen.schachermayer.96.bej} \ref{VOMM.ii} is assumed, while here \ref{VOMM.ii} and \ref{VOMM.iii} follow from the weaker LOP assumption \ref{VOMM.i}.


\subsection{Counterexample}\label{SS:3.2}
The following example illustrates various phenomena that arise when $L_->0$,  and hence also the law of one price, fails. It is striking that the example operates with a \emph{continuous} price process.
\begin{enumerate}[(1)]
\item\label{ex:1} For $\sigma=\inf\{t>0\,|\,L_{t-}=0\}$, the event $F:=\{L_{\sigma-}=0\}\in\F_{T-}$ occurs with positive probability, while $L_t>0$ for all $t\in[0,T]$.
\item\label{ex:2} There is a state price density $\{\tauhZ_T\}_{\tau\in\T}$ compatible with $S$ that satisfies properties \ref{spd.1}--\ref{spd.3} of Definition~\ref{D:SPDlop}. Furthermore, this family of random variables can be chosen such that $\E[(\tauhZ_T)^2\,|\,\F_\tau]=\frac{1}{L_\tau}$ for all $\tau\in\T$. However, $\{\tauhZ_T\}_{\tau\in\T}$ does not have bounded conditional second moments before predictable stopping times, that is, property \ref{spd.4} of Definition~\ref{D:SPDlop} fails.

\item\label{ex:3} The price process satisfies the no-arbitrage (NA) condition on $[0,T]$ for $L^\infty$-admissible as well as for $L^2$-admissible strategies, that is,
$$\text{$\{\vt\sint S_T\,:\,\vt\in\Theta^\infty\}\cap L^0_+=\{0\}$ and $\{\vt\sint S_T\,:\,\vt\in\TB\}\cap L^2_+=\{0\}$},$$
where $\Theta^\infty=\{\vt\in L(S) \,:\,\inf_{t\in[0,T]}\vt\sint S_t\in L^\infty\}$.

\item\label{ex:4} The subspace $\{\vt\sint S_T\,:\,\vt\in\TB\}$ fails to be closed in $L^2$, even though $L_t>0$ for all $t\in[0,T]$.
\item\label{ex:5} There is an announcing sequence $(\sigma_n)_{n=1}^\infty$ to the predictable stopping time $\sigma$ in \ref{ex:1} such that 
\begin{equation*}
\{\vt\sint S_T\,:\,\vt\in\TB_{\sigma-}\}\ne\cap_{n=1}^\infty\mkern2mu\cl\mkern2mu\{\vt\sint S_T\,:\,\vt\in\TB_{\sigma_n}\}.
\end{equation*} 
\item\label{ex:6} The price process admits an absolutely continuous local martingale measure (ACLMM) $Q$ with square-integrable density, but no equivalent martingale measure. In particular, $Q(F)=0$ for the non-null event $F\in\F_{T-}$ defined in \ref{ex:1}. Indeed, $S$ starts at $1$ and terminates at $0$ on $F$.

\item\label{ex:7} In contrast to \ref{ex:3}, there is a `free lunch with vanishing risk' --- a sequence of trading strategies with wealth bounded below by $-\frac{1}{n}$ such that the wealth of each strategy is $1$ on $F$. Likewise, there is an `$L^2$ free lunch' --- a sequence of simple zero-cost strategies which after disposal of an $L^2$-integrable non-negative amount converges in $L^2$ to a non-zero element of $L^2_+$, i.e.,
$$\cl\big(\{\vt\sint S_T~:~\vt\in\Theta\}-L^2_+\big)\cap L^2_+\neq\{0\}.$$  
\end{enumerate} 
\begin{example}\label{example} Let $W$ be a Brownian motion in its natural filtration. For $T:=1$ and $t\in[0,T]$, we set $X_t=(T-t)\Exp(W)_t$. Let $\tau$ be an independent stopping time such that $P(\tau=T)=p\in(0,1)$ and $\tau$ is uniformly distributed on $[0,T)$ with probability $1-p$. Define the stock price by $S = X^\tau$. Then, 
$$\frac{\d S_t}{S_t} = \mu_t \d t + \d W_t,\quad t\in[0,T),$$
where $\mu_t=-\frac{1}{T-t}\one_{\llbracket0, \tau\llbracket}$ and we used that 
$T-t=\e^{\log(T-t)}=\e^{-\int_0^t\frac{1}{T-s}\d s}$ for $t\in[0,T)$.

Let us highlight the key points in the construction of the example. The continuous process $X$ starts at $1$, is positive on $[0,T)$, and equals $0$ at $T$. It is constructed to admit an equivalent local martingale measure with square-integrable density on each closed subinterval of $[0,T)$. Hence, trading on $X$ fails the (NA) condition on $[0,T]$ but satisfies it on $[0,s]$ for every $s<T$.  The stopping time $\tau$ is chosen to satisfy $P(t<\tau<T \,|\, \F_t)>0$ for every $t\in[0,T)$, which yields that the stopped process $S=X^\tau$ satisfies (NA) on the whole time interval $[0,T]$.  
Intuitively, in order to realize an arbitrage opportunity, one must start trading at some stopping time $\varrho$ with $P(\varrho<\tau)>0$. However, because $\tau$ is totally inaccessible on $[0,T)$, such trade is active also on the smaller non-null event $\{\varrho<\tau<T\}$, where the trading gains take both signs as $X$ is independent of $\tau$. A rigorous proof is supplied in item \ref{ex:3}.

The situation changes as soon as one considers the concept of a `free lunch with vanishing risk' (FLVR). While an (NA) violation  is realized by a single strategy, FLVR allows one to choose an entire sequence of strategies that get increasingly closer to an arbitrage opportunity. In this example, it is significant that  an arbitrarily small initial capital can be turned into $1$ at maturity on the event $\{\tau=T\}$ by an admissible strategy whose wealth never drops below zero. The FLVR strategy borrows $\frac{1}{n}$ at the risk-free rate at time zero and places this amount into a closed-end fund whose policy is to have proportion $-\frac{1}{T-t}\one_{\llbracket0, \tau\llbracket}$ invested in the risky asset $S$. The increasingly larger short position in the risky asset exploits the fact that $S$ is drifting strongly towards zero  on the predictable set $\{\tau=T\}$ as $t\to T$ while the conditional probability $P(\tau=T~|~\tau>t)$ increases to $1$. The fund is liquidated if it ever reaches the value of $1$, which is guaranteed to happen on the event $\{\tau=T\}$. We now dispose of the fund value built up on the complement $\{\tau<T\}$. After repaying the risk-free borrowing, this yields the FLVR sequence of terminal wealths $(\one_{\{\tau=T\}} - \frac{1}{n})_{n\in\N}$. The strategy earns $1-\frac{1}{n}$ with probability $P(\tau=T)\equiv 1-p >0$ and loses no more than $\frac{1}{n}$ with probability of no more than $p>0$. The details are found in item \ref{ex:7}.

\ref{ex:1}: Let $\vp^n=-\Exp\big(-(\frac{\mu}{S}\one_{\llbracket 0,T-\frac{1}{n}\rrbracket})\sint S\big)\frac{\mu}{S}\one_{\llbracket 0,T-\frac{1}{n}\rrbracket}$ so that
$$1+\vp^n\sint S_T=\Exp\left(-\left(\frac{\mu}{S}\one_{\llbracket 0,T-\frac{1}{n}\rrbracket}\right)\sint S\right)_T=\Exp\left(-\frac{\mu}{S}\sint S\right)_{T-\frac{1}{n}}.$$
 Observe that $\vp^n\sint S$ is an $\mathcal{H}^2$ semimartingale (Protter \cite[Chapter IV]{protter.05}) and therefore can be approximated by stochastic integrals of simple strategies in $\mathcal{H}^2$ so that each $\vp^n\in\TB$ by \cite[Theorem IV.2]{protter.05}. 
The opportunity process $L=(L_t)_{0\leq t\leq T}$ is given by
\begin{align*}
L_t&{}=\lim_{n\to\infty}\E\left[ {}^t\!\Exp\big(-\frac{\mu}{S}\sint S\big)_{T-\frac{1}{n}}^2~\big|~\F_t\right]
=\lim_{n\to\infty}\E\left[\e^{-\int_{\tau\wedge t}^{\tau\wedge (T-1/n)} \mu_s^2\d s}~\big|~\F_t\right]\\
&{}=\E\left[\e^{-\int_{\tau\wedge t}^\tau\mu^2_s\d s}\one_{\{\tau<T\}}~\big|~\F_t\right]
=\one_{\{\tau\leq t\}}+\one_{\{\tau > t\}}(1-p)\e^{(T-t)^{-1}}\int_{[t,T)}  \e^{-(T-u)^{-1}}\d u\\
&{}\leq\one_{\{\tau\leq t\}}+\one_{\{\tau > t\}}(1-p)(T-t),\qquad t\in[0,T],
\end{align*}
yielding $L_t>0$ for all $t\in[0,T]$ and $L_{T-}=\lim_{t\uparrow T}L_t=0$ on $\{\tau=T\}$ with $P(\tau=T)=p>0$.

\ref{ex:2}: Note that $\frac{\mu}{S}\one_{\llbracket0,T-\frac{1}{n}\rrbracket}$ is the standard adjustment process on $[0,T-\frac{1}{n}]$ for any $n\in\N$
but $\frac{\mu}{S}$ itself is not in $L(S)$. Nonetheless, since $\Exp\big(-\frac{\mu}{S}\sint S\big)_{T-\frac{1}{n}}\to 0$ on $\{\tau=T\}$, one can define 
$\vp:=-\frac{\mu}{S}\Exp(-\frac{\mu}{S}\sint S)\one_{\llbracket0,T\llbracket}=\lim_{n\to\infty}\vp^n$, which gives an integrand in $L(S)$ such that $1+\vp\sint S_T=\lim_{n\to\infty}\Exp\big(-\frac{\mu}{S}\sint S\big)_{T-\frac{1}{n}}$. See also Liptser and Shiryaev \cite[Subsubsection~6.1.4]{liptser.shiryaev.01.part1} for details about stochastic exponentials of stochastic integrals of Brownian motion hitting zero. Because of the independence of $W$ and $\tau$, we have $\E[(1+\vp\sint S_T)^2]=\E[\exp(-\int_{0}^{\tau} \mu_s^2\d s)]=\E[\exp(-\int_{0}^{\tau} \mu_s^2\d s)\one_{\{\tau<T\}}]=L_0.$ This yields $1+\vp^n\sint S_T= \Exp\big(-\frac{\mu}{S}\sint S\big)_{T-\frac{1}{n}}\xrightarrow{L^2}1+\vp\sint S_T$ and therefore $\vp\in\TB$ by approximating it with a diagonal sequence of simple strategies. Property \ref{ex:2} follows directly from Lemma \ref{lem:hit0} below and the fact that $P(L_{\sigma-}=0)=P(\tau=T)=p>0$ by \ref{ex:1}, because $L>0$. This also yields that $Q$ defined via
\begin{equation}\label{ex:ACMM}
\frac{dQ}{dP}=\frac{1+\vp\sint S_T}{\E[1+\vp\sint S_T]}={}^0\mkern-1mu\hat{Z}_T\geq 0
\end{equation}
is an ACLMM for $S$ with square-integrable density ${}^0\mkern-1mu\hat{Z}=({}^0\mkern-1mu\hat{Z}_t)_{0\leq t\leq T}$ and
$$\left\{\frac{dQ}{dP}=0\right\}=\{L_{\sigma-}=0\}=\{\tau=T\}.$$

\ref{ex:3}: We begin by showing the $L^\infty$-(NA) property, that is, $\{\vt\sint S_T\,:\,\vt\in\Theta^\infty\}\cap L^0_+=\{0\}$. For a proof by contradiction, suppose that the $L^\infty$-(NA) property fails, that is, there is $\psi\in\Theta^\infty$ such that $\psi\sint S_T\in L^0_+\setminus\{0\}$. Denote by ${}^0\mkern-1mu\hat{Z}$ the square-integrable density process of the ACLMM $Q$ from \ref{ex:2}. Since $\psi\sint S$ has a uniform lower bound, the local martingale ${}^0\mkern-1mu\hat{Z}(\psi\sint S)$ is a supermartingale. Non-negativity of ${}^0\mkern-1mu\hat{Z}_T$ and $\psi\sint S_T$ together with the supermartingale property yield ${}^0\mkern-1mu\hat{Z}(\psi\sint S)=0$ on $[0,T]$. 
This in turn gives $\psi\sint S_t=0$ for all $t\in[0,T)$ since ${}^0\mkern-1mu\hat{Z}>0$ on $[0,T)$. By the continuity of $S$, one has $\psi\sint S_T=0$, which contradicts $P(\psi\sint S_T>0)>0$. 

The proof of the $L^2$-(NA) property, that is, $\{\vt\sint S_T\,:\,\vt\in\TB\}\cap L^2_+=\{0\}$, proceeds similarly. Indeed, suppose again, for a proof by contradiction, that the $L^2$-(NA) property fails and there is $\psi\in\TB$ such that $\psi\sint S_T=f\in L^0_+\setminus\{0\}$. Then, ${}^0\mkern-1mu\hat{Z}_t(\psi\sint S_t)$ is a martingale by Lemma \ref{lem:hit0} below and hence $\E[{}^0\mkern-1mu\hat{Z}_T(\psi\sint S_T)]= 0$. As before, the latter contradicts the assumption that $P(\psi\sint S_T>0)>0$.

\ref{ex:4}--\ref{ex:5}: 
Recall that $\varphi = -\frac{\mu}{S}\Exp(-\frac{\mu}{S}\sint S)\one_{\llbracket0,T\llbracket}$ is in $\TB$ and that 
$$ 1+\varphi\sint S = \Exp\left(-\frac{\mu}{S}\sint S\right)^\tau\one_{\{\tau<T\}}.$$
Likewise, $\vartheta^n := -\frac{\mu}{S}\one_{\{T-\frac{1}{n}<\tau\}}\, {}^{T-\frac{1}{n}}\Exp(-\frac{\mu}{S}\sint S)\one_{\llbracket0,T\llbracket}$ is in $\TB$ and one has
$$ \one_{\{T-\frac{1}{n}<\tau\}}+\vartheta^n\sint S = \,{}^{T-\frac{1}{n}}\Exp\left(-\frac{\mu}{S}\sint S\right)^\tau\one_{\{T-\frac{1}{n}<\tau<T\}}.$$ 
Observe that $\vartheta_n$ starts trading at 
$$\sigma_n := \left(T-\frac{1}{n}\right)\one_{\{\tau> T-\frac{1}{n}\}}+T\one_{\{\tau\leq(T-\frac{1}{n})\}},$$
which is an announcing sequence for the predictable stopping time $\sigma := T$. Therefore, the above yields 
$$\E\left[\left(\one_{\{T-\frac{1}{n}<\tau\}}+\vt^n\sint S_T\right)^2\right]=\E\left[\exp\left(-\int_{T-\frac{1}{n}}^{\tau} \mu_s^2\d s\right)\one_{\{T-\frac{1}{n}<\tau<T\}}\right]\to 0$$ 
so that $-\vt^n\sint S_T\xrightarrow{L^2} \one_{\{\tau=T\}}$. Thus $\one_{\{\tau=T\}}\in\cl\,\{\vartheta\sint S_T:\vartheta\in\TB\}$ but there is no $\psi\in\TB$ such that $\psi\sint S_T=\one_{\{\tau=T\}}$ by \ref{ex:3}, because such a $\psi\in\TB$ would violate the $L^2$-(NA) property, which gives \ref{ex:4}. 

Furthermore, because $S$ is continuous and $\sigma=T$, we have $\{\vartheta\sint S_T : \vartheta \in\TB_{\sigma_-}\}=\{0\}$ while 
$$\one_{\{\tau=T\}}\in \cap_{n\in\N}\mkern2mu\cl\mkern2mu\{\vartheta\sint S_T:\vartheta\in\TB_{\sigma_n}\},$$ 
which gives \ref{ex:5}.

\ref{ex:6}: The ACLMM for $S$ has been constructed in part \ref{ex:2}, where it is also shown that $Q(F)=0$. On the other hand, by the fundamental theorem of asset pricing as, for example, in Delbaen and Schachermayer \cite[Theorem~1.1]{delbaen.schachermayer.94}, there cannot be an ELMM for $S$. 

\ref{ex:7}: Observe that the non-negative local martingale $\Exp(-\mu\sint W)\equiv 1/\Exp(\frac{\mu}{S}\sint S)$ on $[0,T)$ has a finite left limit at $T$ and that this limit is $0$ on $\tau=T$. For the stopping times 
$$\sigma_n:=\inf\left\{t>0~:~\Exp\left(\frac{\mu}{S}\sint S\right)_t>n\right\},$$ 
we thus have that $\sigma_n<T$ on the event $\{\tau=T\}$ and that $ \frac{1}{n}(\Exp(\frac{\mu}{S}\sint S)^{\sigma_n}-1)$
is the wealth of an $L^\infty$-admissible trading strategy
$$ \psi^n := \frac{1}{n}\frac{\mu}{S}\Exp\left(\frac{\mu}{S}\sint S\right)\one_{\llbracket0, \sigma_n\rrbracket}\in\Theta^\infty,\qquad n\in\N.$$
Since 
\begin{align*}
\psi^n\sint S_T = \frac{1}{n}\left(\Exp\left(\frac{\mu}{S}\sint S\right)_T^{\sigma_n}-1\right)\geq {}& 
\frac{1}{n}\left(\Exp\left(\frac{\mu}{S}\sint S\right)_T^{\sigma_n}\one_{\{\tau=T\}}-1\right)\\
={}&\one_{\{\tau=T\}}-\frac{1}{n}\xrightarrow{L^\infty} \one_{\{\tau=T\}}
\end{align*}
and $P(\tau=T)>0$, this sequence yields a `free lunch with vanishing risk.'

In the very specific setting of this example, one could modify the sequence $(\psi^n)_{n\in\N}$ by starting to trade at $T-\frac{1}{n}$ solely on the event $\{\tau>T-\frac{1}{n}\}$, which yields a sequence of terminal wealths, where the gain is still $(1-\frac{1}{n})\one_{\{\tau=T\}}$ and worst loss is still $\frac{1}{n}$ but the probability of loss drops from at most $P(\tau<T)$ to at most $\frac{P(\tau<T)}{n}$. 

It remains to argue that the strategies $\psi^n$ are not only in $\Theta^\infty$ but also in $\TB$. To this end, we recall that the wealth processes $\psi^n\sint S$ is valued in $[-1/n,1]$ and hence bounded. Therefore, the integral $\psi^n\sint S$ is a locally square-integrable semimartingale so that there exits a sequence of stopping times $(\varrho_m)_{m=1}^\infty$ such that $\psi^{n,m}:=\psi^n\mathbbm{1}_{\llbracket0,\varrho_m\rrbracket}\in\Theta$ by approximating it with a diagonal sequence of simple strategies from the construction of the stochastic integral. Because $\psi^{n,m}\sint S$ is valued in $[-1/n,1]$ for all $m\in\N$, we have that $\psi^{n,m}\sint S_T\to \psi^n\sint S_T$ in $L^2$ and $\psi^{n,m}\sint S_\varrho\to \psi^n\sint S_\varrho$ in $L^0$, as $m\to\infty$, for any $[0,T]$-valued stopping time $\varrho$ which gives $\psi^n\in\TB$ for all $n\in\N$.
\end{example}

\subsection{Law of one price and wealth transfers}\label{SS:3.3bis}
Let us offer some intuition for the link between the opportunity process $L$ and the law of one price for $S$ announced in Theorem~\ref{T:M1}. Recall that $\cG=\{\vt\sint S_T:\vt\in\Theta\}$ contains the terminal wealth distributions attainable by simple trading with zero initial wealth. The statement 
\begin{equation}\label{LP 0 T}
\text{ 
``the affine subspaces $v+\cG$ are disjoint for different values of $v\in\R$''}
\end{equation}
can be seen as the law of one price for simple wealth transfers between time $0$ and time $T$. Indeed, the terminal wealths in $v+\cG$ are obtainable at the initial price $v$. If the \emph{same} wealth is obtainable at two distinct initial prices, the law of one price no longer applies. Observe that \eqref{LP 0 T} can be restated more compactly as $1\notin \cG$. Since $\cG$ is not necessarily closed, one must strengthen this requirement to 
\begin{equation}\label{1 not in cl G}
 1\notin \cl\cG;
\end{equation}
condition \eqref{1 not in cl G} fails if there exists a fixed random variable in $L^2$ that can be approximated arbitrarily well by elements of $v+\cG$ for two different values of $v\in\R$.

Let us generalise these observations to trading between an arbitrary stopping time $\tau\in\T$ and $T$. To this end, 
let $\cGtau$ contain terminal wealths of simple zero-cost strategies that do not trade on the interval $\llbracket0,\tau\rrbracket$, i.e., 
\[\cGtau:=\{\vt\sint S_T\,:\,\vt\in\Theta_\tau\},\qquad{\tau\in\T}.\]
The appropriate condition now reads
\begin{equation}\label{1A not in cl Gtau}
 \one_A\notin \cl\cGtau\qquad\text{ for all $A\in \F_\tau$ with $P(A)>0$}.
\end{equation}
In analogy with \eqref{1 not in cl G}, the requirement~\eqref{1A not in cl Gtau} may be interpreted as the \emph{law of one price for wealth transfers between times $\tau$ and $T$}: if the condition~\eqref{1A not in cl Gtau} fails for some non-null event $A\in\F_\tau$, then there are strategies with different (constant on $A$) initial wealth at time $\tau$ that approximate the same terminal wealth supported on $A$.

Given this interpretation, it is not difficult to see (on an intuitive level at least) that \eqref{1A not in cl Gtau} corresponds to the condition \ref{def:lop:1} of Definition~\ref{D:LOP1S} and that it yields $L>0$ via \eqref{L}. Theorem~\ref{T:M1} further shows that Definition~\ref{D:LOP1S}\ref{def:lop:2} corresponds to the requirement that for every predictable $\tau\in\T$ and some (equivalently every) announcing sequence $(\tau_n)$ for $\tau$, one has 
\begin{equation}\label{1A not in cap cl Gtau_n}
\one_A\notin \cap_{n}\cl\cG_{\tau_n},\qquad \text{for all }A\in \F_{\tau-},\ P(A)>0,
\end{equation}
which then yields 
\begin{equation}\label{Gtau- right}
\cap_{n=1}^\infty\GB_{\tau_n}=\{\vt\sint S_T\,:\,\vt\in\TB_{\tau-}\}=:\GB_{\tau-},
\end{equation}
and hence $L_->0$. The novel condition~\eqref{1A not in cap cl Gtau_n} may be interpreted as the law of one price for wealth transfers between $\tau-$ and $T$ for predictable $\tau\in\T$. Without the LOP condition \eqref{1A not in cap cl Gtau_n}, the first equality in \eqref{Gtau- right} may fail and this can occur even for \emph{continuous} price processes. Both conditions $L>0$ and $L_->0$ can therefore be seen as dynamic versions of Schweizer's \cite[p. 24]{schweizer.01} requirement
$$\text{\emph{no approximate profits in $L^2$ for $\mathcal{G}$}},$$
which yields the absence of a restricted set of `$L^2$ free lunches' tailored to quadratic optimisation criteria.

With these observations in mind, let us now revisit the notion of a state price density, i.e., the family of conditionally square-integrable random variables $\{\tauZ_T\}_{\tau\in\T}$ that meet some additional internal consistency criteria, such as $\E[\tauZ_T\,|\,\F_\tau]=1$ for all $\tau\in\T$. For a fixed $\tau\in\T$ this family, too, defines a set of  terminal wealths available at zero cost at time $\tau\in\T$, namely
\[\GH_\tau := \{W\in L^2(P\,|\,{\F_\tau}) : \E[W\,\tauZ_T\,|\,\F_\tau] = 0\},\]
where $L^2(P\,|\,{\F_\tau})$ is the module appearing in the Hansen and Richard setup of Subsection~\ref{SS:3.5}.
The set $\GH_\tau$ is trivially closed in  $L^2(P\,|\,{\F_\tau})$. Furthemore, one has 
\begin{equation}\label{eq:231112}
\one_A\notin \GH_\tau\qquad\text{ for all $A\in \F_\tau$,\quad $P(A)>0$}
\end{equation}
since none of the payoffs $\one_A$ have zero cost in view of 
$$\E[\one_A\,\tauZ_T\,|\,\F_\tau]=\one_A\E[\tauZ_T\,|\,\F_\tau]=\one_A\neq 0.$$ 
Hence, $\tauZ_T$ yields the law of one price for wealth transfers between $\tau$ and $T$ in the complete market characterised by  $W\in L^2(P\,|\,{\F_\tau})$ being available  at the price $\E[W\,\tauZ_T\,|\,\F_\tau]$ at time $\tau$. Observe that the complete market opportunity process $\hat L$ reads
\[\hat L_\tau = \min_{W\in\GH_\tau}\E[(1-W)^2\,|\,\F_\tau] = \frac{1}{\E[\tauZ_T^2\,|\,\F_\tau]}.\]  

So far, we have only exploited properties \ref{spd.1} and \ref{spd.3} of Definition~\ref{D:SPDlop}. The time consistency condition \ref{spd.2} additionally ensures that $\GH_\sigma \supset \GH_\tau$ for $\sigma,\tau\in\T$ with $\sigma\leq\tau$. Let now $\tau$ be a predictable stopping time in $\T$ and $\tau_n$ an announcing sequence. In principle, it may happen that 
\begin{equation}\label{eq:231112b}
\one_A\in \cap_{n}\GH_{\tau_n},\qquad \text{for some } A\in \cap_{n}\F_{\tau_n},\ P(A)>0,
\end{equation}
which is somewhat surprising given that \eqref{eq:231112} holds for all $\tau\in\T$. Condition \ref{spd.4} of Definition~\ref{D:SPDlop} is needed to prevent \eqref{eq:231112b}.

We may now interpret Theorem~\ref{T:M1} as a market extension theorem: $S$ satisfies the LOP if and only if trading on $S$ can be embedded in a complete market that satisfies LOP. 

\subsection{\texorpdfstring{$L^2$}{L2} projections under LOP}\label{SS:3.4}
In Duffie and Richardson \cite{duffie.richardson.91}, the \emph{mean--variance hedging problem} seeks the mean--variance frontier of wealth distributions of the form $H+\vt\sint S_T$ over admissible $\vt$ with initial wealth 1 and a fixed contingent claim $H\in L^2$.  As in \cite{duffie.richardson.91,schweizer.92.aap}, it is convenient to approach the mean--variance hedging by first minimising the $L^2$ distance between an arbitrary contingent claim and the terminal wealth of a self-financing trading strategy,
\begin{equation}\label{MVH}
\text{ $\min_{\vt\in\TBtau}\E\left[(v+\vt\sint S_T-H)^2\,|\,\F_\tau\right]$ for $v\in L^2(\F_\tau,P)$, $\tau\in\T$.}
\end{equation}
In solving \eqref{MVH}, Theorem~\ref{T:M2} recovers and extends the main results of \v Cern\'y and Kallsen \cite{cerny.kallsen.07} ``on the general structure of mean--variance hedging'' in two ways. First, the $L^2$ projection is obtained without assuming the existence of an equivalent local martingale measure for $S$ but under the milder LOP assumption. The second novelty of Theorem~\ref{T:M2} is an expression for the conditional hedging error contained in \eqref{eq:eps2H} and \eqref{eq:mvherrorbis}, and the orthogonality statement \eqref{eq:meanzero}, which allows us (in Subsection~\ref{SS:3.6}) to formulate a \emph{conditional version} of the efficient frontier in the spirit of Hansen and Richard \cite{hansen.richard.87}. 

To prepare for the statement of Theorem~\ref{T:M2}, let us recall some notation.
Let $X$ be a special $\R^d$-valued semimartingale with predictable characteristics $(B^X,C^X,\nu^X)$; see \cite[II.2.6]{js.03}.
Denote by $\mathscr{B}^d$ the Borel $\sigma$-algebra on $\R^d$. By \cite[II.2.9]{js.03}, there exists some increasing predictable process  of integrable variation, some predictable $\R^{d\times d}$-valued process $c^X$ whose values are non-negative symmetric matrices, and some transition kernel $F^X$ from $(\Omega\times\R_+,P)$ into $(\R^d,\mathscr{B}^d)$ such that
\begin{equation*}
B^X_t=b^X\sint A_t, \ \
C^X_t=c^X\sint A_t, \ \ 
\nu^X([0,t]\times G)=F^X(G)\sint A_t\ \ \mbox{ for }t\in[0,T],\, G\in\mathscr{B}^d.\label{e:bcfa}
\end{equation*}
We call $(b^X,c^X,F^X,A)$ {\em differential characteristics} of $X$.

Especially for $A_t=t$, one can interpret $b^X_t$ as a drift rate, $c^X_t$ as a diffusion coefficient, and $F^X_t$ as a jump arrival intensity. The differential characteristics are typically derived from other ``local'' representations of the process, e.g., in terms of a stochastic differential equation. From now on, we choose the same fixed process $A$ for all the (finitely many) semimartingales in this paper. The results do not depend on the  particular choice of $A$.

If $[X,X]$ is special (i.e., $X$ is locally square integrable), 
\begin{equation*}
\tilde c^X=c^X+\int xx^\top F^X(\d x)=b^{[X,X]}.
\end{equation*}
stands for the modified second characteristic of $X$. 
If they refer to some probability measure $P^\star$ rather than $P$, we write instead $(b^{X\star},c^{X\star},F^{X\star},A)$ and
$\tilde c^{X\star}$, respectively.
We denote the joint characteristics of two special vector-valued semimartingales $X,Y$ by
$$ \left(b^{X,Y},c^{X,Y},F^{X,Y},A\right)= 
\left(\genfrac(){0pt}{0}{b^X}{b^Y},
\left(\begin{array}{cc}
c^{X} & c^{XY} \\ c^{YX} & c^{Y} 
\end{array}\right), F^{X,Y},A\right)$$
Furthermore, for locally square-integrable $X,Y$ we let
$$ \tilde c^{XY}=c^{XY}+\int xy^\top F^{X,Y}(\d x,\d y).$$
\begin{theorem}\label{T:M2}
Suppose that $S$ is locally square integrable and satisfies the law of one price (or, equivalently,  any of the conditions \ref{MT1.2}--\ref{MT1.5} in Theorem \ref{T:M1}). Then,
\begin{enumerate}[(1)]
\item\label{T:main.1} The opportunity process $L$ is the unique bounded semimartingale $L=(L_t)_{0\leq t\leq T}$ such that
\begin{enumerate}[(a)]
\item\label{T:main.1a} $L>0$, $L_->0$, and $L_T=1$.
\item\label{T:main.1b} $\frac{L}{\Exp(B^{\Log(L)})}>0$ is a martingale on $[0,T]$.
\item\label{T:main.1c} $S$ and $[S,S]$ are $P^\star$-special for the opportunity neutral measure $P^\star\sim P$ defined by  
$$\frac{dP^\star}{dP}=\frac{L_T}{\E[L_0]\Exp(B^{\Log(L)})_T}>0,$$
which implies
\begin{align*}
b^{S\star}         &= \frac{b^{S}+c^{S\mkern-2mu\Log(L)}+\int xyF^{S,\Log(L)}(\d x,\d y)}{1+\Delta B^{\Log(L)}}; \\
\tilde{c}^{S\star} &= \frac{c^{S}+\int xx^{\top }(1+y)F^{S,\Log(L)}  (\d x,\d y)}{1+\Delta B^{\Log(L)}}. 
\end{align*}
\item\label{T:main.1d} The set 
\begin{equation*}
\Xi _{a}=\argmin_{\vt\in\R^d}
\{\vt \tilde{c}^{S\star}\vt^\top-2\vt b^{S\star}\}
\end{equation*}
is non-empty.
\item\label{T:main.1e} For some or, equivalently, any $\Xi_a$-valued predictable process $a$, one has 
\begin{align}
&\frac{b^{\Log(L)}}{1+\Delta B^{\Log(L)}}=-\min_{\vt\in\R^{d}}
\{\vt \tilde{c}^{S\star}\vt^\top-2\vt b^{S\star}\}\ 
=-a\tilde{c}^{S\star}a^{\top }+2a b^{S\star},\notag\\
&-a\one_{\rrbracket\tau ,T\rrbracket}\Exp(-(a\one_{\rrbracket\tau ,T\rrbracket})\sint S)_-\in \TB_{\tau},\label{aadm}
\end{align}
for all $[0,T]$-valued stopping times $\tau $.
\end{enumerate}
\item\label{T:main.2} The optimal strategy $\vp^{(\tau)}=\vp^{(\tau)}(v,H)\in\TB_\tau$ for the conditional quadratic hedging problem \eqref{MVH} exists and is given in the feedback form by
\begin{equation}\label{eq:mvhst}
\vp^{(\tau)} (v,H)=\one_{\rrbracket \tau,T\rrbracket}\left(\xi(H) +a(V_{-}(H)-v-\vp^{(\tau)} (v,H)\sint S_{-})\right),
\end{equation}
where the \emph{mean value process} $V=(V_{t})_{0\leq t\leq T} =\big(V_{t}(H)\big)_{0\leq t\leq T}$ is given by
\begin{equation}\label{eq:V}
V_{t} =V_{t}(H)=\frac{1}{L_{t}}\E[\Exp((-\one_{\rrbracket t,T\rrbracket}a)\sint S)_{T}H\,|\,\F_{t}],
\end{equation}
$a$ is an arbitrary $\Xi_a$-valued predictable process, and the pure hedging coefficient $\xi(H)$ is an arbitrary predictable process taking values in 
\begin{equation*}
\Xi_{\xi }=\argmin_{\vt\in\R^d}\{\vt\tilde{c}^{S\star}\vt^{\top }-2\vt\tilde{c}^{SV\star}\}
\end{equation*}
with
\begin{equation*}
\tilde{c}^{SV\star} = \frac{c^{SV}+\int xz(1+y)F^{S,\Log(L),V}(\d x,\d y,\d z)}{1+\Delta B^{\Log(L)}}.
\end{equation*}
\item\label{T:main.3} 
For an arbitrary $\Xi_\xi$-valued predictable process $\xi$, let
\begin{equation}\label{eq:eps2H}
\ve^2_t(H):=\E\left[\one_{\rrbracket t,T\rrbracket}L\left(\tilde{c}^{V\star}-2\xi^\top\tilde{c}^{SV\star}
+\xi^\top \tilde{c}^{S\star}\xi\right)\sint A_T\,\big|\,\F_t\right], \quad t\in[0,T],
\end{equation}
with
\begin{equation*}
\tilde{c}^{V\star}= \frac{c^{V}+\int z^2(1+y)F^{\Log(L),V}(\d y,\d z)}{1+\Delta B^{\Log(L)}}.
\end{equation*}
Then, $\ve(H)$ is a semimartingale and for $\tau\in\T$ (resp., for predictable $\tau\in\T$), the hedging error of the optimal strategy satisfies
\begin{equation}\label{eq:mvherrorbis}
\E\big[\big(v+\vp^{(\tau)}(v,H)\sint S_{T}-H\big)^{2}\,\big|\,\F_\tau\big]=L_\tau(v-V_\tau(H))^2+\ve_\tau^2(H).
\end{equation}
We further have that $\vp^{(\tau)}(H):=\vp^{(\tau)}(V_\tau(H),H)\in\TB_\tau$ is the unique strategy in terms of uniqueness of the wealth process $\vp^{(\tau)}(H)\sint S$ (up to $P$-indistinguishability) such that
\begin{equation}\label{eq:meanzero}
\E\big[\big(V_\tau(H)+\vp^{(\tau)}(H)\sint S_{T}-H\big)(1+\vt\sint S_T)\,\big|\,\F_\tau\big]=0,\qquad \vt\in\TBtau.
\end{equation}
\end{enumerate}
Conversely, if there exists a bounded semimartingale $L=(L_t)_{0\leq t\leq T}$ satisfying \ref{T:main.1}\ref{T:main.1a}--\ref{T:main.1e}, then $L$ is the opportunity process, \ref{T:main.2} and \ref{T:main.3} hold, and $S$ satisfies the law of one price.
\end{theorem}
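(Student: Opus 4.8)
The plan is to use the structural data $(L,a)$ to manufacture a square-integrable $\Exp$-density under which $S$ is a local martingale, so that Theorem~\ref{T:M1} delivers the law of one price; the identification of $L$ with the opportunity process and the validity of \ref{T:main.2}--\ref{T:main.3} then follow at once from the forward implication together with the uniqueness already built into \ref{T:main.1}. Concretely, I fix an $\Xi_a$-valued predictable process $a$ (which exists by \ref{T:main.1d}) and set
\[
N:=-a\sint S+\Log(L)-[a\sint S,\Log(L)],\qquad \tauhZ_T:=\tauE(N)_T=\frac{\tauE(-a\sint S)_T}{L_\tau}\quad(\tau\in\T),
\]
where $\Log(L)$ is well defined because $L>0$ and $L_->0$ by \ref{T:main.1a}, and the second equality uses $\tauE(N)=\tauE(-a\sint S)\,L/L^\tau$ together with $L_T=1$. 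The goal is to verify item~\ref{MT1.4} of Theorem~\ref{T:M1} for this $N$.

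Both local-martingale properties reduce to the same first-order condition. Writing $\ell:=\Log(L)$ and passing to differential characteristics with respect to the common operational time $A$, a direct computation gives the drift of $N$ as $\big(b^{\ell}-a\,b^{S\star}(1+\Delta B^{\ell})\big)\sint A$ and, since $[a\sint S,\ell]$ is of finite variation, the drift of $S+[S,N]$ as $(1+\Delta B^{\ell})\big(b^{S\star}-\tilde{c}^{S\star}a^{\top}\big)\sint A$, after substituting the representations of $b^{S\star}$ and $\tilde{c}^{S\star}$ recorded in \ref{T:main.1c}. Since $a$ attains the minimum in \ref{T:main.1d}, the first-order condition $b^{S\star}=\tilde{c}^{S\star}a^{\top}$ holds, and the first line of \ref{T:main.1e} then reads $b^{\ell}/(1+\Delta B^{\ell})=a\,b^{S\star}$; consequently both drifts vanish. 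Hence $N$ is a $P$-local martingale, so $\Exp(N)$ and each restart $\tauE(N)$ are $P$-local martingales, and $S+[S,N]$ is a $P$-local martingale. Once $\Exp(N)$ is known to be an $\Exp$-density, Proposition~\ref{P:elocalmart} promotes the latter fact to the statement that $S$ is an $\Exp(N)$-local martingale.

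The principal obstacle is to upgrade $\Exp(N)$ from a $P$-local martingale to a genuine square-integrable $\Exp$-density, i.e.\ $\E[\tauE(N)_T\,|\,\F_\tau]=1$ and $\E[\tauE(N)_T^2\,|\,\F_\tau]<\infty$ for all $\tau\in\T$. The second moment is the easy half: $\tauE(-a\sint S)_T=1+\big(-a\one_{\rrbracket\tau,T\rrbracket}\tauE(-a\sint S)_-\big)\sint S_T$ is, by \eqref{aadm} in \ref{T:main.1e}, the terminal wealth of a strategy in $\TBtau$ and hence lies in $L^2$, so $\E[\tauE(N)_T^2\,|\,\F_\tau]=\E[\tauE(-a\sint S)_T^2\,|\,\F_\tau]/L_\tau^2<\infty$ because $L_\tau>0$. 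The first-moment identity is the delicate point: the variance-optimal density is only \emph{signed}, so one cannot invoke the supermartingale property of a non-negative local martingale, and mere square-integrability of the terminal value does not exclude a strict local martingale. Here condition \ref{T:main.1b} is indispensable, since it guarantees that $\Exp(N)$ loses no mass: it makes the opportunity-neutral measure $P^\star$ of \ref{T:main.1c} a \emph{bona fide} probability measure (as $\E[L_T/\Exp(B^{\Log(L)})_T]=\E[L_0]$), and, combined with the $L^2$-control from \ref{T:main.1e} and the boundedness of $L$ from \ref{T:main.1a}, this yields a true-martingale rather than a strict-local-martingale property, whence $\E[\tauE(N)_T\,|\,\F_\tau]=\tauE(N)_\tau=1$. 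Equivalently, $L\,\tauE(-a\sint S)$ and $L\,\tauE(-a\sint S)^2$ are true martingales on $\llbracket\tau,T\rrbracket$, which also sharpens the second moment to $\E[\tauE(N)_T^2\,|\,\F_\tau]=1/L_\tau$.

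With item~\ref{MT1.4} verified, Theorem~\ref{T:M1} shows that $S$ satisfies the law of one price, which is one of the assertions to be proved. Consequently the forward part of the present theorem applies: the opportunity process exists, is a bounded semimartingale, and by \ref{T:main.1} is the \emph{unique} bounded semimartingale satisfying \ref{T:main.1a}--\ref{T:main.1e}. Since the given $L$ is, by hypothesis, a bounded semimartingale enjoying these same five properties, uniqueness forces $L$ to coincide with the opportunity process; statements \ref{T:main.2} and \ref{T:main.3} then hold for $L$ by the forward direction, which completes the converse.
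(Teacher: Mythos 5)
Your proposal addresses only the converse implication of Theorem~\ref{T:M2}, and even there it leaves the decisive step unproved. The first gap is structural: the forward direction is never established, yet your final paragraph invokes it (``the forward part of the present theorem applies''). The forward direction --- that under LOP the conditional problem \eqref{MVH} admits an optimizer in $\TB_\tau$, that the opportunity process is the unique bounded semimartingale satisfying \ref{T:main.1}\ref{T:main.1a}--\ref{T:main.1e}, and that \ref{T:main.2}--\ref{T:main.3} hold --- is the bulk of the theorem and does not come for free; the paper proves it by reducing \eqref{MVH} to the unconditional problem for $\tilde S=\one_{\rrbracket\tau,T\rrbracket}\sint S$ (using $\TB(\tilde S)=\TB_\tau(S)$ and the $L^2$-closedness supplied by Theorem~\ref{T:M1}\ref{MT1.6}--\ref{MT1.7}), characterizing the conditional optimizer by a first-order condition, and then invoking \cite[Proposition~6.1(1),(2)]{czichowsky.schweizer.13} together with the arguments of \cite[Section~4]{cerny.kallsen.07}. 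Without some such argument, your concluding appeal to ``the uniqueness already built into \ref{T:main.1}'' and to the validity of \ref{T:main.2}--\ref{T:main.3} is circular: you are using the very statement being proved.

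The second gap sits inside the converse itself, at the step you correctly flag as ``the delicate point'': that $\E[\tauE(N)_T\,|\,\F_\tau]=1$, i.e.\ that $\tauE(N)$ is a \emph{true} martingale rather than merely the $P$-local martingale your (correct) drift computations yield. The justification you offer --- condition \ref{T:main.1b} makes $P^\star$ a probability measure, ``hence $\Exp(N)$ loses no mass'', plus $L^2$-control and boundedness of $L$ --- is a non sequitur: \ref{T:main.1b} concerns the process $L/\Exp(B^{\Log(L)})$, which is a different object from $\tauE(N)=\tauE(-a\sint S)\,L/L^\tau$, and square-integrability of a terminal value does not exclude strict local martingale behaviour. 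Closing this gap is precisely the content of the verification result the paper cites for the converse, \cite[Proposition~6.1(3)]{czichowsky.schweizer.13}, whose proof combines the admissibility \eqref{aadm} (approximation by simple strategies) with a submartingale property valid for all admissible competitors and the supermartingale property of the non-negative local martingale $L\,\tauE(-a\sint S)^2$, squeezing out the martingale property; the paper's warning via the counterexample \cite{cerny.kallsen.08a}, and the raison d'\^etre of Theorem~\ref{vt}, is exactly that such structural conditions without a genuine verification argument are treacherous. That said, your construction $N=-a\sint S+\Log(L)-[a\sint S,\Log(L)]$, the identity $\tauE(N)_T=\tauE(-a\sint S)_T/L_\tau$, and the two vanishing-drift computations are correct and coincide with the paper's variance-optimal $\Exp$-density (Remark~\ref{rem:vo} and \eqref{eq:tauhZ}); if both gaps were filled, your route to LOP through Theorem~\ref{T:M1}\ref{MT1.4} would be a legitimate alternative to the paper's route through the Czichowsky--Schweizer verification theorem.
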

\begin{proof} Fix $\tau\in\T$. Recall from the proof of Theorem \ref{T:M1}\ref{MT1.6} that $\TB(\tilde{S})=\TBtau(S)$ for $\tilde{S}=\mathbbm{1}_{\rrbracket \tau,T\rrbracket }\sint S$, that $\vt\sint \tilde{S}=\vt\sint S$ for all $\vt\in\TB(\tilde{S})=\TBtau(S)$, and that the conditions \ref{MT1.1}--\ref{MT1.5} of Theorem \ref{T:M1} for $S$ imply that the conditions also hold for $\tilde{S}$. Therefore, the solution $\tilde{\vp}\in\TB(\tilde{S})$ to the \emph{unconditional} $L^2$ approximation
\begin{equation}\label{MVH:2}
\text{ $\min_{\tilde{\vt}\in\TB(\tilde{S})}\E\left[(v+\tilde{\vt}\sint \tilde{S}_T-H)^2\right], \qquad v\in L^2(\F_\tau,P)$,}
\end{equation}
 exists since $\{\tilde{\vt}\sint \tilde{S}_T\,:\,\tilde{\vt}\in\TB(\tilde{S})\}$ is closed in $L^2$ by Theorem \ref{T:M1} applied to $\tilde{S}$. By the strict convexity of the square, the terminal wealth $\tilde{\vp}\sint \tilde{S}_T$ of the optimal strategy is unique. The law of one price then yields uniqueness of the wealth process $\tilde{\vp}\sint \tilde{S}=(\tilde{\vp}\sint \tilde{S}_t)_{0\leq t\leq T}$ by Remark \ref{rem:uniqueness}.

Next, we will show that $\tilde{\vp}\in\TB(\tilde{S})=\TBtau(S)$ also optimizes \eqref{MVH}. To this end, note that the solution of \eqref{MVH:2} is uniquely characterised by the first order condition
\begin{equation*}
\E\left[(v+\tilde{\vp}\sint \tilde{S}_T-H)\tilde{\vt}\sint \tilde{S}_T\right]=0\text{ for all $\tilde{\vt}\in\TB(\tilde{S})$},
\end{equation*}
which yields 
\begin{equation*}
\E\left[(v+\tilde{\vp}\sint \tilde{S}_T-H)\tilde{\vt}\sint \tilde{S}_T~|~\F_\tau\right]=0\text{ for all $\tilde{\vt}\in\TB(\tilde{S})$}
\end{equation*}
by the definition of the conditional expectation, since
$$\text{$\tilde{\vt}\in\TBtau(\tilde{S})\iff\mathbbm{1}_F\tilde{\vt}\in\TBtau(\tilde{S})$ for all $F\in\F_\tau$.}$$ 
In view of $\TB(\tilde{S})=\TBtau(S)$ and $\vt\sint S_T= \vt\sint \tilde{S}_T$ for all $\vt\in\TBtau(S)=\TB(\tilde{S})$, the strategy $\tilde \vp$ indeed optimizes \eqref{MVH}, i.e., 
\begin{equation*}
\E\left[(v+\tilde\vp\sint S_T-H)\vt\sint S_T~|~\F_\tau\right]=0\text{ for all $\vt\in\TBtau(S)$}.
\end{equation*}

\noindent\ref{T:main.1} and the converse: As shown above, the optimizer $\vp^{(\tau)}(1,0)\in\TBtau$ exists. 
Parts (1) and (2) of \cite[Proposition~6.1]{czichowsky.schweizer.13} then yield that the opportunity process $L=(L_t)_{0\leq t\leq T}$ is the unique semimartingale satisfying \ref{T:main.1}\ref{T:main.1a}--\ref{T:main.1e}. The converse implication that a bounded semimartingale $L=(L_t)_{0\leq t\leq T}$ satisfying \ref{T:main.1}\ref{T:main.1a}--\ref{T:main.1e} is the opportunity process follows from \cite[Proposition 6.1(3)]{czichowsky.schweizer.13}.\medskip 

\noindent \ref{T:main.2} and \ref{T:main.3}: This follows by observing that one only needs to solve the unconditional $L^2$ approximation problem \eqref{MVH:2} and that the arguments of \cite[Section 4]{cerny.kallsen.07} only need the properties \ref{T:main.1}\ref{T:main.1a}--\ref{T:main.1e}. As shown above, the latter follow from the law of one price and do not require the existence of an equivalent local martingale measure with square-integrable density as assumed in \cite{cerny.kallsen.07}. 
\end{proof}
\begin{remark}\label{R:tau-}
All statements about trading from $\tau$ onwards in Theorem~\ref{T:M2} have a natural counterpart for trading from $\tau-$ for predictable $\tau\in\T$, using strategies in $\TB_{\tau-}$. For example, for  $v\in L^2(\F_{\tau-},P)$ one obtains 
\begin{align*}
\E\big[\big(v+\vp^{(\tau-)}(v,H)\sint S_{T}-H\big)^{2}\big|\,\F_{\tau-}\big]={}&L_{\tau-}(v-V_{\tau-}(H))^2+\ve_{\tau-}^2(H),\\
\E\big[\big(V_{\tau-}(H)+\vp^{(\tau-)}(H)\sint S_{T}-H\big)(1+\vt\sint S_T)\big|\,\F_{\tau-}\big]={}&0,\qquad \vt\in\TB_{\tau-},
\end{align*}
thus increasing the scope of Theorem~\ref{T:M2}. The proofs for $\tau-$ are completely analogous and therefore omitted. 
\end{remark}
\begin{remark}
When the law of one price for $S$ fails, the $L^2$ approximation problem \eqref{MVH1} becomes degenerate in the following sense. Let 
$$\sigma:=\inf\{t>0\,:\,L_{t-}=0\};\qquad\tau:=\inf\{ t>0\,:\,L_t=0\}.$$
Then, at least one of the events $\{\sigma\leq T\}$ and $\{\tau<T\}$ has positive probability and \emph{any} $L^2$ random variable supported on these events can be approximated with arbitrary precision in $L^2$ by zero-cost strategies in $\TB$. In such case, $\{\vt\sint S_T:\vt\in\TB\}$ may or may not be closed as illustrated by Examples 3.10, 3.11, and 3.12 and Theorem 5.3 of Delbaen et al. \cite{delbaen.al.97}. 
\end{remark}
Combining Theorems \ref{T:M1} and \ref{T:M2} allows us to give a simplified verification theorem for the law of one price that only involves finding a candidate $\hat{L}=(\hat{L}_t)_{0\leq t\leq T}$ for the opportunity process and not the actual opportunity process $L=(L_t)_{0\leq t\leq T}$. Its significance compared to earlier results based on the converse implication of Theorem~\ref{T:M2} is that we do not need to verify the admissibility of the adjustment process $a$ in \eqref{aadm}, which is usually a difficult task as illustrated in the counterexample \cite{cerny.kallsen.08a}.
\begin{theorem}\label{vt}
Let $S$ be locally square integrable. Suppose that there exists a bounded semimartingale $\hat{L}=(\hat{L}_t)_{0\leq t\leq T}$ such that
\begin{enumerate}[(a)]
\item\label{T:veri.1a} $\hat{L}>0$, $\hat{L}_->0$, and $\hat{L}_T=1$.
\item\label{T:veri.1b}  For
\begin{align*}
\hat{b}         &= \frac{b^{S}+c^{S\mkern-2mu\Log(\hat{L})}+\int xyF^{S,\Log(\hat{L})}(\d x,\d y)}{1+\Delta B^{\Log(\hat{L})}}; \\
\hat{c} &= \frac{c^{S}+\int xx^{\top }(1+y)F^{S,\Log(\hat{L})}  (\d x,\d y)}{1+\Delta B^{\Log(\hat{L})}}, 
\end{align*}
one has 
\begin{align*}
&\frac{b^{\Log(\hat{L})}}{1+\Delta B^{\Log(\hat{L})}}=-\min_{\vt\in\R^{d}}
\{\vt \hat{c}\vt^\top-2\vt \hat{b}\}.
\end{align*}
\end{enumerate}
Then $S$ satisfies the law of one price (or, equivalently,  any of the conditions \ref{MT1.2}--\ref{MT1.5} in Theorem \ref{T:M1}). In particular, the opportunity process $L=(L_t)_{0\leq t\leq T}$ is the maximal bounded semimartingale satisfying \ref{T:veri.1a} and \ref{T:veri.1b}.
\end{theorem}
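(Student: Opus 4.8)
The plan is to avoid the converse implication of Theorem~\ref{T:M2} and instead verify criterion \ref{MT1.4} of Theorem~\ref{T:M1} directly. This is the whole point: running the converse of Theorem~\ref{T:M2} would force us to check condition \ref{T:main.1}\ref{T:main.1e} in full, \emph{including} the admissibility requirement \eqref{aadm} for the adjustment process, which is precisely the difficult step we wish to bypass. Instead I would use the candidate $\hat L$ to manufacture an explicit semimartingale $N$, show that $\Exp(N)$ is a square-integrable $\Exp$-density for which $S$ is an $\Exp(N)$-local martingale, and then read off both the law of one price and the maximality claim from the equivalences already in hand.

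\emph{Construction.} Condition~\ref{T:veri.1b} forces $\min_{\vt}\{\vt\hat c\vt^\top-2\vt\hat b\}$ to be finite; since $\hat c$ is nonnegative definite (a consequence of $\hat L>0$, $\hat L_->0$ in \ref{T:veri.1a}), the minimiser set is then non-empty and I would fix a predictable selection $a$. Positivity of $\hat L$ and $\hat L_-$ makes $\Log(\hat L)$ well defined, and I would set
\[
N:=-a\sint S+\Log(\hat L)-[a\sint S,\Log(\hat L)],\qquad\text{equivalently}\qquad \Exp(N)=\Exp(-a\sint S)\,\hat L/\hat L_0 .
\]
Condition~\ref{T:veri.1b} then delivers two drift identities. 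Its first-order part $\hat c\,a^\top=\hat b^\top$ cancels the drift of $S+[S,N]$, so that $S$ is an $\Exp(N)$-local martingale by Proposition~\ref{P:elocalmart}; its value part $b^{\Log(\hat L)}/(1+\Delta B^{\Log(\hat L)})=-\min_{\vt}\{\cdots\}$, combined with the optimality of $a$, cancels the drift of $N$ — hence $\Exp(N)$ is a $P$-local martingale — and simultaneously makes the \emph{non-negative} process $U:=\Exp(-a\sint S)^2\,\hat L=\hat L_0^2\,\Exp(N)^2/\hat L$ a $P$-local martingale. Both drift computations are routine once the explicit differential characteristics are inserted, the jump corrections being exactly absorbed by the numerators defining $\hat b$ and $\hat c$.

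\emph{Square integrability and the $\Exp$-density property.} As a non-negative local martingale, $U$ is a supermartingale; applying this to the restart ${}^\tau U:=\tauE(-a\sint S)^2\hat L$ (with ${}^\tau U_\tau=\hat L_\tau$) and writing $\tauhZ_T:=\tauE(N)_T=\tauE(-a\sint S)_T/\hat L_\tau$ gives $\E[\tauhZ_T^2\,|\,\F_\tau]\le 1/\hat L_\tau<\infty$, i.e. conditional square integrability. The step I expect to be the crux is upgrading $\tauE(N)$ from a local to a true martingale, i.e. obtaining the defining identity $\E[\tauE(N)_T\,|\,\F_\tau]=1$ of Definition~\ref{D:Edens}. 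Here the boundedness of $\hat L$ in \ref{T:veri.1a} is decisive: writing $\hat L\le c$, the pointwise estimate $\tauE(N)_\rho^2={}^\tau U_\rho\,\hat L_\rho/\hat L_\tau^2\le c\,{}^\tau U_\rho/\hat L_\tau^2$ together with the supermartingale bound $\E[{}^\tau U_\rho\,|\,\F_\tau]\le{}^\tau U_\tau=\hat L_\tau$ yields $\sup_{\rho}\E[\tauE(N)_\rho^2\,|\,\F_\tau]\le c/\hat L_\tau$ over all stopping times $\rho\ge\tau$. Along any localizing sequence for $\tauE(N)$ this is a uniform $L^2$-bound, hence uniform integrability, so $\tauE(N)$ is a genuine martingale on $\llbracket\tau,T\rrbracket$ and $\E[\tauE(N)_T\,|\,\F_\tau]=\tauE(N)_\tau=1$; alternatively one invokes Proposition~\ref{P:emart}. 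Thus $\Exp(N)$ is a square-integrable $\Exp$-density and $S$ is an $\Exp(N)$-local martingale, which is exactly \ref{MT1.4}, so Theorem~\ref{T:M1} gives that $S$ satisfies the law of one price.

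\emph{Maximality.} Once LOP holds, Theorem~\ref{T:M2}\ref{T:main.1} furnishes the actual opportunity process $L$, and its properties \ref{T:main.1a}, \ref{T:main.1e} are precisely \ref{T:veri.1a}--\ref{T:veri.1b} written for $L$, since $\hat b$ and $\hat c$ evaluated at $L$ coincide with $b^{S\star}$ and $\tilde c^{S\star}$; hence $L$ itself satisfies \ref{T:veri.1a}--\ref{T:veri.1b}. For domination, note that by the equivalence \ref{MT1.3}$\Leftrightarrow$\ref{MT1.4} the family $\{\tauhZ_T\}_{\tau\in\T}$ built above is a state price density compatible with $S$ and satisfying LOP (Definition~\ref{D:SPDlop}). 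By the variance optimality recorded in Remark~\ref{rem:vo}, the minimal conditional second moment over all such families equals $1/L_\tau$, so $\E[\tauhZ_T^2\,|\,\F_\tau]\ge 1/L_\tau$; combined with the bound $\E[\tauhZ_T^2\,|\,\F_\tau]\le 1/\hat L_\tau$ from the previous step this gives $\hat L_\tau\le L_\tau$ for every $\tau\in\T$. Therefore $L$ is the maximal bounded semimartingale satisfying \ref{T:veri.1a}--\ref{T:veri.1b}, as claimed.
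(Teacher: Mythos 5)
Your proposal takes a genuinely different route from the paper. The paper's proof is three lines long: conditions \ref{T:veri.1a}--\ref{T:veri.1b} say precisely that $\hat{L}$ solves the BSDE (4.18) of \cite{czichowsky.schweizer.13} (in its unconstrained form (6.8)); by \cite[Lemma~4.17]{czichowsky.schweizer.13} the opportunity process $L$ is the \emph{maximal} solution of that BSDE, so $L\geq\hat{L}$, whence $L>0$ and $L_->0$, which is criterion \ref{MT1.5} of Theorem~\ref{T:M1}; the maximality claim comes from the same citation. Note the logical order: the paper obtains $L\geq\hat{L}$ \emph{first}, by a comparison argument that never has to realize the pointwise minimiser $a$ as anything resembling a trading strategy, and LOP only afterwards. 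You go the opposite way: build $\Exp(N)$ explicitly from $\hat{L}$, verify criterion \ref{MT1.4}, and recover maximality from the dual bound $1/L_\tau\leq\E[\tauE(N)_T^2\,|\,\F_\tau]\leq 1/\hat{L}_\tau$. The parts of your argument that are in place are correct and attractive: the supermartingale bound via the non-negative process $U=\Exp(-a\sint S)^2\hat{L}$, the uniform-integrability upgrade exploiting the boundedness of $\hat{L}$ (or Proposition~\ref{P:emart}), and the duality-based maximality all work, and they expose the mechanism that the paper's citation hides.

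There are, however, two genuine gaps. First, your $N$ is not defined until you know that the predictable selection $a$ lies in $L(S)$; a pointwise minimiser of $\vt\hat{c}\vt^\top-2\vt\hat{b}$ need not be $S$-integrable, and this is exactly the species of integrability problem the theorem is designed to sidestep (cf.\ the discussion of \eqref{aadm} and the counterexample \cite{cerny.kallsen.08a}). The gap is reparable: condition \ref{T:veri.1b} gives $a\hat{c}a^\top\sint A=\frac{1}{1+\Delta B^{\Log(\hat{L})}}\sint B^{\Log(\hat{L})}<\infty$ together with $a\hat{b}^\top=a\hat{c}a^\top$, which yields integrability of $a$ with respect to both parts of the canonical $P^\star$-decomposition of $S$, and $L(S)$ is invariant under the equivalent change of measure $P^\star\sim P$; but this must be spelled out and is not ``routine''. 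Second, inserting characteristics only shows that the drift \emph{rates} of $S+[S,N]$, of $N$, and of $U$ vanish, i.e., that these processes are $\sigma$-martingales; Proposition~\ref{P:elocalmart} needs $S+[S,N]$ to be a genuine $P$-\emph{local martingale}, and passing from zero drift rate to local martingality requires a specialness/local-integrability argument. For $U\geq0$ this is free (non-negative $\sigma$-martingales are local martingales), and $\Exp(N)$ can then be dominated by $U$ via the boundedness of $\hat{L}$, but for $S+[S,N]$ you need an additional localization, e.g.\ along $\rho_k=\inf\{t>0:\hat{L}_t\wedge\hat{L}_{t-}<1/k\}$, which converges stationarily to $T$ precisely because $\hat{L}>0$ and $\hat{L}_->0$. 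With these two repairs your argument goes through and constitutes a legitimate, self-contained alternative to the paper's proof.
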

\begin{proof}
By properties \ref{T:veri.1a} and \ref{T:veri.1b}, the semimartingale $\hat{L}$ is a solution to the BSDE (4.18) in \cite{czichowsky.schweizer.13} that takes the form (6.8) without cone constraints. Because the opportunity process $L$ is the maximal solution to this BSDE by \cite[Lemma 4.17]{czichowsky.schweizer.13}, it follows that the opportunity process $L$ satisfies $L\geq \hat{L}$. Therefore, $L>0$ and $L_->0$ by \ref{T:veri.1a} so that \ref{MT1.5} and hence any of the conditions \ref{MT1.1}--\ref{MT1.4} in Theorem \ref{T:M1} holds. 
\end{proof}


\subsection{Hansen and Richard (1987) framework}\label{SS:3.5}
The results in Subsections~\ref{SS:3.1} and \ref{SS:3.4} have natural counterparts for trading in a wider class of admissible strategies $\TTtau$, where one only requires \emph{conditional} square integrability of the terminal wealth. This will allow us to study the Hansen and Richard \cite{hansen.richard.87} framework enhanced by dynamic trading in Subsection~\ref{SS:3.6}. 

\begin{definition}\label{D:TT}
For $\tau\in\T$, a trading strategy $\vt\in L(S)$ is in $\TTtau$, if $\vt=0$ on $\llbracket 0,\tau\rrbracket$ and there exists a sequence $(\vt^n)_{n\in\N}$ of simple trading strategies $\vt^n=(\vt^n_t)_{0\leq t\leq T}\in\Theta_\tau$ such that
\begin{enumerate}[(1)]
\item\label{TTtau.1} $\vt^n\sint S_T\xrightarrow{ L^2(P\,|\,{\F_\tau})} \vt\sint S_T$, that is, $\E[(\vt^n\sint S_T-\vt\sint S_T)^2\,|\,\F_\tau]\xrightarrow{L^0}0$.
\item $\vt^n\sint S_\sigma\xrightarrow{L^0} \vt\sint S_\sigma$ for all $\sigma\in\T$.
\end{enumerate}
For predictable $\tau\in\T$, the set $\TT_{\tau-}$ is defined analogously by replacing $\tau$ with $\tau-$ above and requiring $\vt=0$ only on $\llbracket 0,\tau\llbracket$. 
\end{definition}
\begin{remark}\label{LPG}
Convergence \ref{TTtau.1} has an equivalent form
\begin{enumerate}
\myitem{(1\/\hspace{0.05em}$'$)}\label{TTtau.1'} There is a bounded $0<K\in L^0(\F_\tau,P)$ such that $K\vt^n\sint S_T\xrightarrow{ L^2} K\vt\sint S_T$. 
\end{enumerate} 
It follows immediately from \ref{TTtau.1'} that $\TTtau$ satisfies
\begin{equation}\label{TTtau}
\TTtau = L^0(\F_\tau,P)\TBtau,\qquad \tau\in\T.
\end{equation}
To see that $K$ in \ref{TTtau.1'} exists, let $X_n=\vt^n\sint S_T$ with $\vt^n\in\TTtau$ such that $X_n\xrightarrow{ L^2(P\,|\,{\F_\tau})} X=\vt\sint S_T$. By passing to a subsequence, we may assume that $\E[(X_n-X)^2\,|\,\F_\tau]\xrightarrow{\text{$P$-a.s.}}0$. Since $\E[X_n^2\,|\,\F_\tau]<\infty$, this yields $\E[X^2\,|\,\F_\tau]<\infty$ and $\sup_{n\in\N}\E[X_n^2\,|\,\F_\tau]<\infty$. Hence, \ref{TTtau.1'} holds with  
$$K=\frac{1}{\sqrt{1+\sup_{n\in\N}\E[X_n^2\,|\,\F_\tau]+\E[X^2\,|\,\F_\tau]}}\in(0,1).$$ 
\noindent``\ref{TTtau.1'} $\Rightarrow$ \ref{TTtau.1}'' follows from the properties of conditional expectation \cite[Theorem~I.1.21]{he.wang.yan.92}. 
\end{remark}
By Theorem \ref{T:M1}\ref{MT1.6}, the law of one price for the price process $S$ yields $L^2$-closedness of admissible terminal wealths 
$\{\vt\sint S_T\,:\,\vt\in\TBtau\}$ for all $\tau\in\T$. The next result translates the $L^2$-closedness of $\{\vt\sint S_T\,:\,\vt\in\TBtau\}$ into the conditional closedness of terminal wealths generated by the wider class $\TTtau$. To avoid repetition, we do not state an analogous result for $\TT_{\tau-}$ for predictable $\tau\in\T$.
\begin{theorem}\label{T:GTtau}
Assume the law of one price for $S$. Then the following statements hold for any $\tau\in\T$, $H\in L^2(P\,|\,\F_\tau)$, and $v\in L^0(\F_\tau,P)$.
\begin{enumerate}[(1)]
\item\label{GTtau.1} The set of terminal wealths $\{\vt\sint S_T\,:\,\vt\in\TTtau\}$ is closed in $L^2(P\,|\,\F_\tau)$. 
\item\label{GTtau.2} The optimizer in $\min_{\vt\in\TTtau}\E\left[(v+\vt\sint S_T-H)^2\,|\,\F_\tau\right]$
is the unique (up to null a strategy) process $\vp^{(\tau)}(v,H)$ satisfying
\begin{equation*}
\E\big[\big(v+\vp^{(\tau)}(v,H)\sint S_{T}-H\big)\vt\sint S_T\,\big|\,\F_\tau\big]=0,\qquad \vt\in\TTtau.
\end{equation*} 
\item\label{GTtau.3} For $H\in L^2(\F_T,P)$ and $L_\tau v^2\in L^1(\F_\tau,P)$, the optimal hedges in $\TBtau$ and $\TTtau$ coincide. In particular, the opportunity process over $\TB$ coincides with that over $\TT$.  
\item\label{GTtau.4} The tracking process $V(H)$ is well-defined by formula \eqref{eq:V} on $\llbracket \tau,T\rrbracket$. The feedback formula for the optimal strategy \eqref{eq:mvhst} remains in force on $\llbracket \tau,T\rrbracket$.
\item\label{GTtau.5} The minimal hedging error in \eqref{eq:eps2H} is well-defined on $\llbracket \tau,T\rrbracket$. Furthermore, formula \eqref{eq:mvherrorbis} continues to hold.
\item\label{GTtau.6} \emph{Mutatis mutandis}, Remark~\ref{R:tau-} applies over the wider set $\TT$. 
\end{enumerate}
\end{theorem}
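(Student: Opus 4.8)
The whole theorem is driven by one device, \emph{scalarisation}: multiplying by a bounded, strictly positive $K\in L^0(\F_\tau,P)$ turns conditional ($L^2(P\,|\,\F_\tau)$) statements into unconditional ($L^2$) ones and carries $\TTtau$ into $\TBtau$, after which the unconditional results of Theorems~\ref{T:M1}\ref{MT1.7} and \ref{T:M2} apply; dividing by $K$ transports the conclusions back. Two elementary facts underpin this. First, the reduction already implicit in Remark~\ref{LPG}: if $X_n\to X$ in $L^2(P\,|\,\F_\tau)$, then after passing to a subsequence and putting $K=(1+\sup_n\E[X_n^2\,|\,\F_\tau]+\E[X^2\,|\,\F_\tau])^{-1/2}$, one has $KX_n\to KX$ in $L^2$, since $K^2\E[(X_n-X)^2\,|\,\F_\tau]\le2$ and tends to $0$, so dominated convergence applies. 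Second, the module identity \eqref{TTtau}, $\TTtau=L^0(\F_\tau,P)\TBtau$, together with the inclusion $L^\infty(\F_\tau,P)\TBtau\subseteq\TBtau$ (multiply the simple strategies approximating a given element of $\TBtau$ by the bounded $\F_\tau$-scalar; they stay simple, and both convergences in the definition of $\TBtau$ persist).

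For \ref{GTtau.1} the crux is the sub-claim that any $\vt\in\TTtau$ whose terminal wealth happens to lie in $L^2$ already has $\vt\sint S_T\in\GBtau:=\{\vt\sint S_T:\vt\in\TBtau\}$. Writing $\vt=c\bar\vt$ with $c\in L^0(\F_\tau,P)$ and $\bar\vt\in\TBtau$ via \eqref{TTtau}, the bounded truncations $c^{(j)}:=\sign(c)(|c|\wedge j)$ give $c^{(j)}\bar\vt\in\TBtau$ with $c^{(j)}\bar\vt\sint S_T\to\vt\sint S_T$ in $L^2$ by dominated convergence (dominant $|\vt\sint S_T|$), so $\vt\sint S_T\in\GBtau$ because $\GBtau$ is $L^2$-closed (Theorem~\ref{T:M1}\ref{MT1.7}). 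Granting this, closedness is immediate: given $W_n=\vt^n\sint S_T$ with $\vt^n\in\TTtau$ and $W_n\to W$ in $L^2(P\,|\,\F_\tau)$, scalarise to $KW_n\to KW$ in $L^2$; each $KW_n$ is the terminal wealth of $K\vt^n\in\TTtau$ and lies in $L^2$, hence in $\GBtau$ by the sub-claim, so $KW\in\GBtau$ by closedness, and $W=\tfrac1K(KW)$ exhibits $W$ as an element of $\{\vt\sint S_T:\vt\in\TTtau\}$.

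Part \ref{GTtau.2} is the conditional projection onto the conditionally closed submodule $\{\vt\sint S_T:\vt\in\TTtau\}$. I would produce the optimiser by scalarising the data: choose bounded $K>0$ in $L^0(\F_\tau,P)$ with $Kv,KH\in L^2$, solve \eqref{MVH} over $\TBtau$ for endowment $Kv$ and claim $KH$ by Theorem~\ref{T:M2}\ref{T:main.2} to obtain $\bar\vp\in\TBtau$, and set $\vp^{(\tau)}(v,H):=\tfrac1K\bar\vp\in\TTtau$. The stated orthogonality is verified by testing against $\vt\in\TTtau$: pick bounded $K''>0$ with $K''\vt\in\TBtau$, apply the $\F_\tau$-conditional first-order condition for $\bar\vp$ (exactly as in the proof of Theorem~\ref{T:M2}), and pull the $\F_\tau$-measurable factors $K,K''$ out and cancel. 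Conversely, expanding $\E[(v+\vt\sint S_T-H)^2\,|\,\F_\tau]$ around $\vp^{(\tau)}(v,H)$ and killing the cross term by the orthogonality (legitimate since $\TTtau$ is a module, so $\vt-\vp^{(\tau)}(v,H)\in\TTtau$) shows that orthogonality characterises the minimiser and forces conditional uniqueness of the terminal wealth, with uniqueness of the full wealth process following from LOP by Remark~\ref{rem:uniqueness}. Parts \ref{GTtau.3}--\ref{GTtau.5} follow the same template: since every $\vt\in\TTtau$ has $K''\vt\in\TBtau$ for some bounded $K''>0$, the $\TBtau$-optimiser of \eqref{MVH} already satisfies the wider $\TTtau$-orthogonality (insert $K''\vt$, then divide by $K''$), so by \ref{GTtau.2} the two optimal wealth processes coincide; taking $v=1$, $H=0$ (here $L_\tau v^2=L_\tau\le1\in L^1$) gives the coincidence of opportunity processes, which is \ref{GTtau.3}. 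The formulas \eqref{eq:V}, \eqref{eq:mvhst}, \eqref{eq:eps2H}, \eqref{eq:mvherrorbis} then extend from $H\in L^2(\F_T,P)$ to $H\in L^2(P\,|\,\F_\tau)$ through the homogeneity relations $V(KH)=KV(H)$, $\vp^{(\tau)}(Kv,KH)=K\vp^{(\tau)}(v,H)$ and $\ve^2(KH)=K^2\ve^2(H)$, valid because $K$ is $\F_\tau$-measurable (so it factors through the conditional expectations and through $L_t$, $t\ge\tau$) and all objects are conditionally linear, resp.\ quadratic, in $(v,H)$; dividing by the appropriate power of $K$ yields \ref{GTtau.4}--\ref{GTtau.5}. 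Part \ref{GTtau.6} is verbatim the above with $\tau-$, $\F_{\tau-}$, $\TTtauminus$, $\TBtauminus$ in place of $\tau$, $\F_\tau$, $\TTtau$, $\TBtau$, invoking Remark~\ref{R:tau-}.

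The main obstacle is the sub-claim inside \ref{GTtau.1}: conditional $L^2$-integrability of a $\TTtau$-wealth must be upgraded to membership in the \emph{unconditional} class $\GBtau$. The difficulty is precisely that the scalars $c$ in \eqref{TTtau} need not be bounded, so one cannot simply multiply the approximating simple strategies; the truncation-plus-closedness step is what bridges this gap, and once it is in place the remainder is bookkeeping around the scalarisation device.
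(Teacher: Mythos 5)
Your proposal is correct and takes essentially the same route as the paper: scalarisation via Remark~\ref{LPG} and the module identity \eqref{TTtau} to reduce the conditional statements to the unconditional ones, closedness from Theorem~\ref{T:M1}\ref{MT1.7} for part~\ref{GTtau.1}, and Theorem~\ref{T:M2} applied to the pair $(Kv,KH)$ together with the homogeneity $\vp^{(\tau)}(Kv,KH)=K\vp^{(\tau)}(v,H)$ for parts~\ref{GTtau.2}--\ref{GTtau.6}. Your only genuine addition is the truncation-plus-closedness argument for the sub-claim $\bigl(L^0(\F_\tau,P)\GBtau\bigr)\cap L^2\subseteq\GBtau$, which spells out carefully what the paper compresses into the terse identity $(L^0(\F_\tau,P)\cGtau)\cap L^2=\cGtau$ in its proof of part~\ref{GTtau.1}; this is a refinement of the same argument, not a different approach.
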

\begin{proof}
\ref{GTtau.1}: Let 
\begin{equation*}
\cGtau:= \{\vt\sint S_T\,:\,\vt\in\Theta_\tau\};\qquad  \GB_\tau:= \{\vt\sint S_T\,:\,\vt\in\TBtau\}
;\qquad  \GT_\tau:= \{\vt\sint S_T\,:\,\vt\in\TTtau\}.
\end{equation*}
Consider a sequence $(X_n)_{n\in\N}$ converging in $L^2(P\,|\,\F_\tau)$ to some $X$.  By \eqref{TTtau} we have $\GTtau=L^0(\F_\tau,P)\GBtau$. By Remark~\ref{LPG}, there is a bounded positive $K\in L^0(\F_\tau,P)$ such that $KX_n$ in $(L^0(\F_\tau,P)\cGtau)\cap L^2=\cGtau$ converges to $KX$ in $L^2$. Hence, $KX\in\cl\cGtau = \GBtau$ and $X\in \frac{1}{K}\GBtau\subset\GTtau$.\medskip

\noindent{\ref{GTtau.2}--\ref{GTtau.6}}: There is a bounded positive $K\in L^0(\F_\tau,P)$ such that $Kv$ and $KH$ are in $L^2$. It is then immediate from \eqref{TTtau} that $\vp^{(\tau)}(Kv,KH) = K \vp^{(\tau)}(v,H)$ and the formulae follow by applying Theorem~\ref{T:M2} to the pair $(Kv,KH)$.  
\end{proof}


\subsection{Mean--variance hedging}\label{SS:3.6}
The next theorem extends the mean--variance hedging of Duffie and Richardson \cite[Subsection~4.3]{duffie.richardson.91} to general contingent claims and conditional mean--variance frontier. Observe that even in the classical case with trivial $\F_0$  and square-integrable $H$, our characterization of the mean--variance frontier simplifies that of \cite{duffie.richardson.91}. By not requiring $L_\tau<1$, we also expand the conditional mean--variance analysis of Hansen and Richard \cite{hansen.richard.87} to a setting without their Assumption~3.1. Observe that dynamic trading has not  previously been considered in the setting of \cite{hansen.richard.87}. All statements below have natural counterparts for trading from $\tau-$ for predictable $\tau\in\T$ in the spirit of Remark~\ref{R:tau-}. 

\begin{theorem}\label{T:EF1}
Assume~$S$ satisfies the law of one price. For $\tau\in\T$ and $H\in L^2(P\,|\,\F_\tau)$, the following are equivalent.
\begin{enumerate}[(i)]
\item\label{EF.1} $W\in H+\{\vt\sint S_T\,:\,\vt\in\TTtau\}$ has the smallest conditional variance for a given conditional mean.
\item\label{EF.2} $W = H-\vp^{(\tau)}(H)\sint S_T+(\lambda-V_\tau(H))(1-\Exp((-\one_{\rrbracket t,T\rrbracket}a)\sint S)_{T})$ for some $\lambda\in L^0(\F_\tau,P)$.
\item\label{EF.3} One has
\begin{equation*}
\begin{split}
\Var(W\,|\,\F_\tau) :={}& \E[W^2\,|\,\F_\tau]- (\E[W\,|\,\F_\tau])^2 \\
={}& \ve_\tau^2(H)+\frac{L_\tau}{1-L_\tau}\left( \E[W\,|\,\F_\tau]-V_\tau(H)\right) ^{2},
\end{split}
\end{equation*}
with the convention $0/0=0$. In particular, on the set $\{L_\tau=1\}$, the efficient frontier collapses to a single point with $\E[W\,|\,\F_\tau] = V_\tau(H)$ and $\Var(W\,|\,\F_\tau) = \ve_\tau^2(H)$.
\end{enumerate}
\end{theorem}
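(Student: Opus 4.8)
The plan is to reduce the conditional mean--variance problem to a one-dimensional problem inside the closed submodule $\GTtau=\{\vt\sint S_T:\vt\in\TTtau\}$ of $L^2(P\,|\,\F_\tau)$, using the orthogonal decomposition of $H$ already furnished by Theorem~\ref{T:M2} and transported to $\TTtau$ by Theorem~\ref{T:GTtau}. The central object is the conditional $L^2(P\,|\,\F_\tau)$-projection of the constant $1$ onto $\GTtau$, which I would identify explicitly as
$$R:=1-\Exp\big((-\one_{\rrbracket\tau,T\rrbracket}a)\sint S\big)_T.$$

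First I would record the properties of $R$. Equation \eqref{aadm} shows that $-a\one_{\rrbracket\tau,T\rrbracket}\Exp(-(a\one_{\rrbracket\tau,T\rrbracket})\sint S)_-\in\TBtau$, whose terminal wealth is $\Exp((-\one_{\rrbracket\tau,T\rrbracket}a)\sint S)_T-1=-R$; hence $R\in\GBtau\subseteq\GTtau$. Writing $1-R=\Exp((-\one_{\rrbracket\tau,T\rrbracket}a)\sint S)_T=L_\tau\,\tauhZ_T$ for the variance-optimal $\Exp$-density (Remark~\ref{rem:vo}), compatibility with $S$ together with LOP give the zero-cost pricing $\E[\tauhZ_T\,G\,|\,\F_\tau]=0$ for every $G\in\GTtau$, so that $(1-R)\perp\GTtau$ conditionally and $R$ is indeed the projection of $1$. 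This yields the representation identity
$$\E[G\,R\,|\,\F_\tau]=\E[G\,|\,\F_\tau]\qquad\text{for all }G\in\GTtau,$$
and, combining $\E[R(1-R)\,|\,\F_\tau]=0$ with the fact that $\E[(1-R)^2\,|\,\F_\tau]=L_\tau$ is the minimal value of Definition~\ref{D:L} (extended to $\TTtau$ via Theorem~\ref{T:GTtau}\ref{GTtau.3}), the moments $\E[R\,|\,\F_\tau]=\E[R^2\,|\,\F_\tau]=1-L_\tau$ and $\Var(R\,|\,\F_\tau)=L_\tau(1-L_\tau)$.

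Next I would invoke the decomposition $H=V_\tau(H)+\vp^{(\tau)}(H)\sint S_T+U$, where $U:=H-V_\tau(H)-\vp^{(\tau)}(H)\sint S_T$ satisfies $\E[U\,|\,\F_\tau]=0$, $\E[U\,G\,|\,\F_\tau]=0$ for all $G\in\GTtau$, and $\E[U^2\,|\,\F_\tau]=\ve_\tau^2(H)$; the first two follow from the orthogonality \eqref{eq:meanzero} (taking $\vt=0$ and then general $\vt\in\TTtau$) and the last from \eqref{eq:mvherrorbis} at $v=V_\tau(H)$. Any competitor $W\in H+\GTtau$ can then be written as $W=V_\tau(H)+U+G''$ with $G''\in\GTtau$ arbitrary, and because $U$ is conditionally centred and conditionally orthogonal to $\GTtau$ one obtains the clean split
$$\E[W\,|\,\F_\tau]=V_\tau(H)+\E[G''\,|\,\F_\tau],\qquad\Var(W\,|\,\F_\tau)=\ve_\tau^2(H)+\Var(G''\,|\,\F_\tau).$$
Minimising $\Var(W\,|\,\F_\tau)$ for a prescribed conditional mean $m=\E[W\,|\,\F_\tau]$ thus reduces to minimising $\Var(G''\,|\,\F_\tau)$ over $G''\in\GTtau$ subject to $\E[G''\,|\,\F_\tau]=\mu:=m-V_\tau(H)$. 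By the representation identity this constraint reads $\E[G''R\,|\,\F_\tau]=\mu$, and splitting $G''=\tfrac{\mu}{1-L_\tau}R+(G''-\tfrac{\mu}{1-L_\tau}R)$ into two conditionally orthogonal pieces on $\{L_\tau<1\}$ shows that the unique minimiser is $G''=\tfrac{\mu}{1-L_\tau}R$, with minimal value $\Var(G''\,|\,\F_\tau)=\tfrac{L_\tau}{1-L_\tau}\mu^2$. This produces the frontier formula \ref{EF.3} and, after substituting $G''$ back and setting $\lambda-V_\tau(H)=\tfrac{\mu}{1-L_\tau}$, the explicit efficient wealth \ref{EF.2}; the three statements are then tied together by observing that the variance bound depends on $W$ only through $\E[W\,|\,\F_\tau]$, so that efficiency \ref{EF.1} is equivalent to attaining equality, hence to \ref{EF.2} and to \ref{EF.3}.

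The main obstacle I anticipate is the degenerate set $\{L_\tau=1\}$, where $R=0$ conditionally (since $\E[R^2\,|\,\F_\tau]=1-L_\tau=0$), so the representation identity forces $\E[G''\,|\,\F_\tau]=0$ for every $G''\in\GTtau$; there the only attainable mean is $V_\tau(H)$, the minimiser is $G''=0$, and the frontier collapses to the single point $\E[W\,|\,\F_\tau]=V_\tau(H)$, $\Var(W\,|\,\F_\tau)=\ve_\tau^2(H)$, which is exactly what the convention $0/0=0$ encodes in \ref{EF.3}. Keeping the whole argument conditional --- all projections, the Cauchy--Schwarz/Lagrange step, and the case split on the $\F_\tau$-measurable set $\{L_\tau=1\}$ --- inside the module $L^2(P\,|\,\F_\tau)$ rather than in $L^2$ is the one point demanding care, but it is routine given the closedness in Theorem~\ref{T:GTtau}\ref{GTtau.1} and the conditional Riesz/Hilbert-module structure already in use in the paper.
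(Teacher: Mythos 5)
Your proposal is correct and follows essentially the same route as the paper's proof: your $R$ is exactly the paper's $X_\tau=\vp^{(\tau)}(0,1)\sint S_T$ (same moments $\E[R\,|\,\F_\tau]=\E[R^2\,|\,\F_\tau]=1-L_\tau$, same identification \eqref{eq:Xtau}), your decomposition $W=V_\tau(H)+U+G''$ matches the paper's mutually orthogonal split $H+\GTtau=A+B+C$ built on \eqref{eq:meanzero} and \eqref{eq:mvherrorbis}, and your constrained minimisation over $G''\in\GTtau$ is just the explicit form of the paper's observation that $\GTtau$ decomposes into $L^0(\F_\tau,P)X_\tau$ plus a conditionally mean-zero subspace, so the efficient elements are the $L^0(\F_\tau,P)$-multiples of $X_\tau$. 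The only cosmetic difference is that you characterise $R$ via the variance-optimal state price density ($1-R=L_\tau\,\tauhZ_T$) while the paper characterises it as the optimal quadratic hedge of the claim $1$ from zero initial capital; these are the same object, and the orthogonality you want is most directly cited from Theorem~\ref{T:GTtau}\ref{GTtau.2} (or Lemma~\ref{lem:mart}) rather than from compatibility alone.
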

\begin{proof}
Since $\vp^{(\tau)}(0,1)$ maximizes conditional expected quadratic utility in $\TBtau$, it is mean--variance efficient in $\TBtau$. Letting $X_\tau:=\vp^{(\tau)}(0,1)\sint S_T$, one has $\E[X_\tau\,|\,\F_\tau]=\E[X^2_\tau\,|\,\F_\tau]=1-L_\tau$ in view of the orthogonality \eqref{eq:meanzero} and the identity
\begin{equation}\label{eq:Xtau}
X_\tau = 1-\Exp((-\one_{\rrbracket t,T\rrbracket}a)\sint S)_{T}.
\end{equation}
By orthogonality \eqref{eq:meanzero}, $\GTtau = \{\vt\sint S_T\,:\,\vt\in\TTtau\}$ decomposes orthogonally into $L^0(\F_\tau,P)X_\tau$ and a subspace whose elements necessarily have conditional mean zero. This shows that all efficient payoffs in $\GTtau$ are of the form $\lambda X_\tau$ for
$\lambda\in L^0(\F_\tau,P)$. Writing 
$$H+\GTtau = \underbrace{H-V_\tau(H)-\vp^{(\tau)}(H)\sint S_T}_A + \underbrace{V_\tau(H)(1-X_\tau)}_B + \underbrace{\GTtau}_C,$$
the equivalence of \ref{EF.1} and \ref{EF.2} now follows from \eqref{eq:meanzero} and \eqref{eq:Xtau} in view of the mutual orthogonality of $A$, $B$, and $C$. The orthogonality yields moment expressions for $W$ in \ref{EF.2},
\begin{align*}
\E[W\,|\,\F_\tau] ={}& \lambda(1-L_\tau)+V_\tau(H)L_\tau,\\
\E[W^2\,|\,\F_\tau] ={}& \ve_\tau^2(H)+\lambda^2(1-L_\tau)+V^2_\tau(H)L_\tau,
\end{align*}
which after algebraic manipulations shows the equivalence of \ref{EF.2} and \ref{EF.3}.
\end{proof}
We next examine the mean--variance frontier when the contingent claim is not part of the endowment but instead can be purchased at time $\tau$ for the price $\pi\in L^0(\F_\tau,P)$. The question is then how to select an amount of the contingent claim to be held from $\tau$ to maturity $T$ so as to maximize the Sharpe ratio of a zero-cost position that involves dynamic trading in the underlying assets $(1,S)$ and a static trade in the contingent claim.
\begin{definition}
For $\tau\in\T$, $\pi\in L^0(\F_\tau,P)$, and $H\in L^2(P\,|\,\F_\tau)$ we call
\begin{equation*}
\rho_\tau:=\sup\left\{\frac{\E[(\vt\sint S_T+\lambda(\pi-H)\,|\,\F_\tau]}{\sqrt{\Var(\vt\sint S_T+\lambda(\pi-H)\,|\,\F_\tau)}}:
\vt\in\Theta_\tau,\lambda\in L^0(\F_\tau,P)\right\}
\end{equation*}
the \emph{maximal Sharpe ratio on $\rrbracket \tau,T\rrbracket$}, with the convention $0/0=0$.  
\end{definition}
\begin{theorem}\label{T:sharpe} 
Assume $S$ satisfies the law of one price. Fixing $\tau\in\T$ and $H\in L^2(P\,|\,\F_\tau)$, suppose further that at time $\tau$, the contingent claim $H$ delivered at $T$ is available at price $\pi\in L^0(\F_\tau,P)$, to be held to maturity. Assume this $\pi$ satisfies $\one_{\{\ve_\tau(H)=0\}}(\pi-V_\tau(H))=0$. Then the maximal conditional Sharpe ratio $\rho_\tau$ is given by 
\begin{equation}\label{eq:SRoption}
\rho_\tau^{2}= L_\tau^{-1}-1+\frac{\left(\pi-V_{\tau}(H)\right) ^{2}}{\ve _{\tau}^{2}(H)},
\end{equation}
with the convention $0/0=0$. Furthermore, this Sharpe ratio is attained by the terminal wealth
\begin{equation}\label{eq:SRwealth}
\hat\eta(\pi-H)+\vp^{(\tau)}(0,1-\hat\eta (\pi-H))\sint S_T,
\end{equation}
with the contingent claim position
\begin{equation}\label{eq:hateta}
\hat\eta =\frac{\pi-V_{\tau}(H)}{\varepsilon _{\tau}^{2}(H)}\frac{1}{1+\rho_\tau^{2}}.
\end{equation}
\end{theorem}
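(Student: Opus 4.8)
The plan is to recast the maximisation as a conditional projection in the Hilbert module $L^2(P\,|\,\F_\tau)$ and then read off both the value and the maximiser from an explicit orthogonal decomposition. First I would reduce the problem to the wider class $\TTtau$. Writing $\langle\cdot,\cdot\rangle:=\E[\,\cdot\,\cdot\,|\,\F_\tau]$ and
\[
\mathcal{R}:=\GTtau+L^0(\F_\tau,P)(\pi-H),
\]
the zero-cost payoffs appearing in the definition of $\rho_\tau$ form a conditionally linear subspace that is dense in $\mathcal{R}$, since the simple integrands $\Theta_\tau$ are dense in $\GTtau$ by Definition~\ref{D:TT} and Theorem~\ref{T:GTtau}\ref{GTtau.1}. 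As the conditional Sharpe functional $Y\mapsto \E[Y\,|\,\F_\tau]/\sqrt{\Var(Y\,|\,\F_\tau)}$ is scale invariant, maximising it is equivalent to maximising $\langle Y,1\rangle^2/\langle Y,Y\rangle$. By the conditional Cauchy--Schwarz inequality together with the conditional projection theorem of Hansen and Richard \cite[Theorem~2.1]{hansen.richard.87}, this conditional supremum equals $P^2:=\langle\widehat Y,\widehat Y\rangle$, where $\widehat Y$ is the conditional orthogonal projection of $1$ onto $\mathcal{R}$ (using $\langle Y,1\rangle=\langle Y,\widehat Y\rangle$ for $Y\in\mathcal{R}$); since $t\mapsto t/(1-t)$ is increasing on $[0,1)$, this gives $\rho_\tau^2=P^2/(1-P^2)$, attained by $\widehat Y$.

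Next I would produce an explicit orthogonal decomposition. Set $X_\tau:=\vp^{(\tau)}(0,1)\sint S_T$, $U:=1-X_\tau$, and let $R:=H-V_\tau(H)-\vp^{(\tau)}(H)\sint S_T$ be the hedging residual. By Theorem~\ref{T:GTtau}\ref{GTtau.1}, $\vp^{(\tau)}(0,G)\sint S_T=\Pi_{\GTtau}G$ is the conditional projection of $G$ onto $\GTtau$, so $X_\tau=\Pi_{\GTtau}1$ and hence $U\perp\GTtau$. Using \eqref{eq:Xtau} and the moments $\E[X_\tau\,|\,\F_\tau]=\E[X_\tau^2\,|\,\F_\tau]=1-L_\tau$ from the proof of Theorem~\ref{T:EF1}, one gets $\E[U\,|\,\F_\tau]=\langle U,U\rangle=L_\tau$. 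The orthogonality \eqref{eq:meanzero} gives $\E[R\,|\,\F_\tau]=0$ and $R\perp\GTtau$, whence $R\perp U$, while \eqref{eq:mvherrorbis} at $v=V_\tau(H)$ gives $\langle R,R\rangle=\ve_\tau^2(H)$. Writing $\delta:=\pi-V_\tau(H)$ and using $1=X_\tau+U$, the expansion
\[
\pi-H=\bigl(\delta X_\tau-\vp^{(\tau)}(H)\sint S_T\bigr)+(\delta U-R)
\]
splits $\pi-H$ into a $\GTtau$-part and the orthogonal part $\delta U-R$, so $(\pi-H)-\Pi_{\GTtau}(\pi-H)=\delta U-R$.

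Finally I would compute the projection and identify the maximiser. Since $1=X_\tau+U$ with $X_\tau\in\GTtau\subseteq\mathcal{R}$, the projection has the form $\widehat Y=X_\tau+c\,(\delta U-R)$ with $c\in L^0(\F_\tau,P)$ fixed by the normal equation $\langle 1-\widehat Y,\delta U-R\rangle=0$; inserting $\langle U,U\rangle=L_\tau$, $\langle R,R\rangle=\ve_\tau^2(H)$, $\langle U,R\rangle=0$ yields $c=\delta L_\tau/(\delta^2 L_\tau+\ve_\tau^2(H))$. Orthogonality then gives $P^2=(1-L_\tau)+\delta^2 L_\tau^2/(\delta^2 L_\tau+\ve_\tau^2(H))$ and, after simplification, $1-P^2=L_\tau\ve_\tau^2(H)/(\delta^2 L_\tau+\ve_\tau^2(H))$, so that $\rho_\tau^2=P^2/(1-P^2)=L_\tau^{-1}-1+\delta^2/\ve_\tau^2(H)$, which is \eqref{eq:SRoption}. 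Substituting $1+\rho_\tau^2=(\delta^2 L_\tau+\ve_\tau^2(H))/(L_\tau\ve_\tau^2(H))$ into \eqref{eq:hateta} shows $\hat\eta=c$; hence, from $\vp^{(\tau)}(0,1-\hat\eta(\pi-H))\sint S_T=X_\tau-\hat\eta\,\Pi_{\GTtau}(\pi-H)$ and the decomposition above, the candidate wealth \eqref{eq:SRwealth} equals $X_\tau+\hat\eta(\delta U-R)=\widehat Y$ and therefore attains $\rho_\tau$.

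The main obstacle I expect is not the algebra but the passage between the simple strategies $\Theta_\tau$ used in the definition of $\rho_\tau$ and the closed module $\GTtau$: one must show that restricting to $\Theta_\tau$ does not lower the supremum, which needs continuity of the conditional Sharpe functional along $L^2(P\,|\,\F_\tau)$-approximating sequences together with care on the degenerate regions. The upper bound is immediate, since every simple payoff lies in $\mathcal{R}$; for the lower bound I would approximate the $\GTtau$-component of $\widehat Y$ by simple integrands while keeping the explicit coefficient $\lambda=\hat\eta$ in front of $\pi-H$, and pass to the limit where $\Var(\,\cdot\,|\,\F_\tau)>0$. On the exceptional set $\{\ve_\tau(H)=0\}$ the hypothesis $\one_{\{\ve_\tau(H)=0\}}(\pi-V_\tau(H))=0$ forces $\delta=0$ and $R=0$, so that $\pi-H\in\GTtau$ and the claim adds nothing; together with the set $\{L_\tau=1\}$, these cases are absorbed by the convention $0/0=0$ and confirm the stated formulae.
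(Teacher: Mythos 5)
Your proof is correct, but it takes a route that is dual to, and organized differently from, the paper's. The paper never forms the extended payoff space $\mathcal{R}$: it starts from the scalar identity $\frac{(\E[X\,|\,\F_\tau])^2}{\Var(X\,|\,\F_\tau)}=\frac{1}{\inf_{\alpha\in L^0(\F_\tau,P)}\E[(1-\alpha X)^2\,|\,\F_\tau]}-1$, absorbs $\alpha$ into $(\eta,\vt)$ by scaling, and thereby reduces $\rho_\tau^2$ to the nested minimization $\inf_\eta\inf_{\vt}\E[(1-\eta(\pi-H)-\vt\sint S_T)^2\,|\,\F_\tau]$; the inner infimum is then read off from the hedging-error formula \eqref{eq:mvherrorbis} applied to the composite claim $1-\eta(\pi-H)$, leaving a one-variable quadratic in $\eta$ whose minimizer is exactly \eqref{eq:hateta}. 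You instead compute the conditional orthogonal projection $\widehat Y$ of $1$ onto $\GTtau+L^0(\F_\tau,P)(\pi-H)$ explicitly, via the decomposition into $X_\tau$, $U=1-X_\tau$, and the hedging residual $R$, and obtain $\rho_\tau^2=P^2/(1-P^2)$. The two computations are equivalent --- indeed $\inf_{Y\in\mathcal{R}}\E[(1-Y)^2\,|\,\F_\tau]=1-P^2$ --- and both rest on the same ingredients: the orthogonality \eqref{eq:meanzero}, the error formula \eqref{eq:mvherrorbis} at $v=V_\tau(H)$, and the moments $\E[X_\tau\,|\,\F_\tau]=\E[X_\tau^2\,|\,\F_\tau]=1-L_\tau$ coming from \eqref{eq:Xtau}; your algebra (the value of $c$, of $P^2$, and the identification $\hat\eta=c$) checks out. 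What the paper's version buys is brevity: the inner optimization is delegated wholesale to the already-established Theorem~\ref{T:M2}/Theorem~\ref{T:GTtau} machinery, so no projection need be computed. What your version buys is transparency: the attaining wealth \eqref{eq:SRwealth} is literally the projection $\widehat Y$, the geometry lines up with the proof of Theorem~\ref{T:EF1} and with Remark~\ref{R:VOED}, and --- a point the paper glosses over --- you explicitly flag and treat the passage from the simple strategies $\Theta_\tau$ appearing in the definition of $\rho_\tau$ to the closed module $\GTtau$ (the paper silently replaces $\Theta_\tau$ by $\TBtau$ in its first display), as well as the degenerate sets $\{\ve_\tau(H)=0\}$ and $\{L_\tau=1\}$, which in the paper are absorbed into the convention $0/0=0$ without comment.
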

\begin{proof}
The proof mirrors \cite[Lemma~5.1]{cerny.kallsen.08b}. Let $X^{\eta,\vt}:=\eta \left( \pi-H\right)+\vt \sint S_{T}$ 
for $\vt\in\TBtau$ and $\F_\tau$-measurable $\eta$ and $\pi$. For conditionally square-integrable $X$, one obtains easily
\begin{align*}
\frac{(\E[X\,|\,\F_\tau])^{2}}{\Var(X\,|\,\F_\tau)}
={}\frac{1}{\inf_{\alpha \in L^0(\F_\tau,P)}\{\E\left[ (1-\alpha X)^{2}\,|\,\F_\tau\right] \}}-1.
\end{align*}
Then 
\begin{align*}
\rho_\tau^{2} ={}&\sup_{\eta \in L^0(\F_\tau,P),\vartheta \in \TBtau}
\left\{\frac{(\E[X^{\eta,\vt}\,|\,\F_\tau])^{2}}{\Var(X^{\eta,\vt}\,|\,\F_\tau)}\right\}=\sup_{\alpha,\eta \in L^0(\F_\tau,P),\vartheta \in \TBtau}
\left\{ \frac{1}{\E\left[ (1-\alpha X^{\eta,\vt})^{2}\,|\,\F_\tau\right] }-1\right\} \\
={}&\frac{1}{\inf_{\eta \in L^0(\F_\tau,P)}\left\{ \inf_{\vartheta \in \TBtau}\{\E\left[ (1-X^{\eta,\vt})^{2}\,|\,\F_\tau\right] \}\right\} }-1 \\
={}&\frac{1}{\inf_{\eta \in L^0(\F_\tau,P)}\left\{ L_{\tau}(1-\eta(\pi-V_{\tau}(H)))^{2}+\eta ^{2}\varepsilon _{\tau}^{2}(H)\right\} }-1,
\end{align*}
where the last equality follows from \eqref{eq:mvherrorbis}  with the contingent claim $1-\eta (\pi-H)$. Straightforward calculations yield the optimal volume sold in \eqref{eq:hateta} and the maximal conditional Sharpe ratio \eqref{eq:SRoption}. By \eqref{eq:mvhst} with the contingent claim $1-\hat\eta(\pi-H)$, the optimal investment-cum-hedging wealth is given by \eqref{eq:SRwealth}.
\end{proof}
\begin{remark}[Absence of arbitrage in an extended market]
A square-integrable $\Exp$-density compatible with $S$ represents an extended market that embeds admissible trading in $S$, i.e.,
\begin{equation*}
\vt\sint S_t=\E[(\vt\sint S_T)\,{}^t\!\Exp(N)_T\,|\,\F_t],\qquad 0\leq t\leq T,\ \vt\in\TB.
\end{equation*}
The extended market trades every contingent claim $H\in L^2$ at the price
\begin{equation*}
\tilde S_t:=\E[H\, {}^{t}\!\Exp(N)_T\,|\,\F_t],\qquad 0\leq t\leq T.
\end{equation*} 
If $\Exp(N)>0$, then $\Exp(N)$ is the density of an equivalent local martingale measure for $S$ and $\tilde S$, hence the extended market $(S,\tilde S)$ is arbitrage-free over admissible strategies. However, as soon as $\Exp(N)\leq0 $ with positive $P$-probability, the extended market will contain arbitrage opportunities since the contingent claim $H=\one_{\{\tau\leq T\}}$, where $\tau$ is the first time $\Exp(N)$ is less than or equal to zero, trades at a non-positive price $\E[\Exp(N)_\tau \one_{\{\tau\leq T\}}\,|\,\F_0]$ at time $0$. 
\end{remark}
\begin{remark}[Minimality of the variance-optimal market completion]\label{R:VOED}
Fix $\tau\in\T$ and consider a statically complete market for wealth transfers between $\tau$ and $T$, where every terminal wealth distribution $W\in L^2(P\,|\,\F_\tau)$ is available to purchase at time $\tau$ at the price
$$ p_\tau(W) = \E[\tauE(N)_TW\,|\,\F_\tau]$$
with $N=-a\sint S +\Log(L) - [a\sint S ,\Log(L)]$. 
Such market subsumes dynamic trading in $S$ using strategies in $\TTtau$ as well as static positions in any contingent claim $H\in L^2(P\,|\,\F_\tau)$ at the price $V_\tau(H)$. It is well known (see, e.g., Hansen and Jagannathan \cite[Eq.~17]{hansen.jagannathan.91}) that the highest Sharpe ratio attainable in such a complete market takes the value
$$\Var(\tauE(N)_T\,|\,\F_\tau) = \E\bigg[\frac{\tauE(-a\sint S)^2_T}{L_\tau^2}\,\bigg|\,\F_\tau\bigg] = L_\tau^{-1}-1.$$
Thus, the variance-optimal $\Exp$-density $\Exp(N)$ is the only square-integrable state price density compatible with $S$ whose complete market does not expand the conditional efficient frontiers generated by trading in $S$ alone; cf. Theorem~\ref{T:sharpe}. 
\end{remark}


\section{Proofs}\label{S:4}


\subsection{Relation between state price densities and \texorpdfstring{$\Exp$}{E}-densities}
We begin with two propositions that are of more general interest. Recall that, by a conditional version of the Riesz representation theorem for Hilbert spaces in \cite[Theorem~2.1]{hansen.richard.87}, a conditionally linear and continuous operator can be represented as a conditional expectation involving a conditionally square-integrable random variable. More precisely, for $\tau\in\T$ and an $\F_\tau$-conditionally linear and continuous operator $p_{\tau}:L^2(\F_T,P)\to L^0(\F_\tau,P)$, there is an $\F_T$-measurable random variable $\tauZ_T$ such that 
\begin{equation}\label{eq:ptau2}
\E[\tauZ_T^2\,|\,\F_\tau]<\infty\quad\text{ and }\quad p_\tau(H) = \E[\tauZ_T H \,|\,\F_\tau] \quad\text{ for all } H\in L^2(\F_T,P).
\end{equation}
The conditional operator norm of $p_{\tau}$ then satisfies $\|p_{\tau}\|:=\left(\E[\tauZ_T^2\,|\,\F_{\tau}]\right)^{1/2}$. We refer to Cerreia-Vioglio, Kupper, Maccheroni, Marinacci, and Vogelpoth~\cite{cv.al.16.jmaa} for more details about conditional $L^p$ spaces and an analysis of conditionally linear operators on them via $L^0$ modules.

The next proposition shows that the existence of a family $\{p_\tau\}_{\tau\in\T}$ of pricing operators and a family $\{\tauZ_T\}_{\tau\in\T}$ of state price densities, both satisfying the law of one price, are equivalent. Comparing the Definitions \ref{D:PSlop} and \ref{D:SPDlop}, the equivalence of properties \ref{D:PSlop:1}--\ref{D:PSlop:3b} of Definition~\ref{D:PSlop} and \ref{spd.1}--\ref{spd.3} of Definition~\ref{D:SPDlop} directly follows from using \eqref{eq:ptau2}. Therefore, we only need to show the equivalence of properties \ref{spd.4} of Definition~\ref{D:PSlop} and \ref{spd.4} of Definition~\ref{D:SPDlop}, which is a consequence of the conditional version of the uniform boundedness principle below.
\begin{proposition}\label{P:UBP}
 For a predictable stopping time $\sigma\in\T$ with an announcing sequence $(\sigma_m)_{m=1}^\infty\in\T$, the following are equivalent.
\begin{enumerate}[(i)]
\item\label{ubp1} There are $\F_{\sigma_m}$-conditionally linear and continuous mappings $p_{\sigma_m}:L^2(\F_T,P)\to L^0(\F_{\sigma_m},P)$ that are pointwise convergent, that is, $(p_{\sigma_m}(H))_{m=1}^\infty$ is a convergent sequence in $L^0$ for all $H\in L^2(\F_T,P)$.
\item\label{ubp2} There is an $\F_{\sigma-}$-linear and continuous mapping $p_{\sigma-}:L^2(\F_T,P)\to L^0(\F_{\sigma-},P)$ such that $p_{\sigma_m}(H)\xrightarrow{L^0}p_{\sigma-}(H)$, as $m\to\infty$, for all $H\in L^2(\F_T,P)$.
\item\label{ubp3} There is a sequence $\{\sigmZ_T\}_{m=1}^\infty$ of random variables such that $p_{\sigma_m}$ are given by \eqref{eq:ptau2}, the ``conditional operator norms'' $\|p_{\sigma_m}\|:=\left(\E[(\sigmZ_T)^2\,|\,\F_{\sigma_m}]\right)^{1/2}$ are bounded, that is, 
\begin{equation}\label{eq:ubp}
C:=\sup_{m\in\N}\left(\E[(\sigmZ_T)^2\,|\,\F_{\sigma_m}]\right)^{1/2}<\infty,
\end{equation}
and there is $D$, a dense subset of $L^2(\F_T,P)$, such that for all $H\in D$ the sequence  $\left(p_{\sigma_m}(H)\right)_{m=1}^\infty$ converges in $L^0$ to some random variable.
\end{enumerate}
\end{proposition}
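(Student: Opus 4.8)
The plan is to run the cycle \ref{ubp2}~$\Rightarrow$~\ref{ubp1}~$\Rightarrow$~\ref{ubp3}~$\Rightarrow$~\ref{ubp2}, with all the weight resting on \ref{ubp1}~$\Rightarrow$~\ref{ubp3}, which is the genuine uniform boundedness statement; the other two arcs are soft. The implication \ref{ubp2}~$\Rightarrow$~\ref{ubp1} is immediate, since a convergent sequence is a convergent sequence. For the analysis throughout I would write $R_m:=\big(\E[(\sigmZ_T)^2\,|\,\F_{\sigma_m}]\big)^{1/2}$ for the conditional operator norm and freely use the conditional Cauchy--Schwarz inequality $|p_{\sigma_m}(H)|\le R_m\big(\E[H^2\,|\,\F_{\sigma_m}]\big)^{1/2}$ together with the fact that, for a predictable $\sigma$ with announcing sequence $(\sigma_m)$, the $\sigma$-fields $\F_{\sigma_m}$ increase to $\bigvee_m\F_{\sigma_m}=\F_{\sigma-}$.

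For \ref{ubp3}~$\Rightarrow$~\ref{ubp2} I would argue as in the extension of a uniformly bounded operator from a dense subspace. Given $H\in L^2$ and $H'\in D$,
\[
|p_{\sigma_m}(H)-p_{\sigma_m}(H')|\le R_m\big(\E[(H-H')^2\,|\,\F_{\sigma_m}]\big)^{1/2}\le C\big(\E[(H-H')^2\,|\,\F_{\sigma_m}]\big)^{1/2}.
\]
Doob's maximal inequality applied to the nonnegative martingale $\E[(H-H')^2\,|\,\F_{\sigma_m}]$ gives $\sup_m\E[(H-H')^2\,|\,\F_{\sigma_m}]\to0$ in probability as $H'\to H$ in $L^2$, so the convergence of $(p_{\sigma_m}(H'))_m$ on the dense set $D$ upgrades to Cauchyness of $(p_{\sigma_m}(H))_m$ in $L^0$ for every $H$. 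I would then set $p_{\sigma-}(H):=\lim_m p_{\sigma_m}(H)$, which is $\F_{\sigma-}$-measurable as a limit of $\F_{\sigma_m}$-measurable random variables. Conditional linearity passes to the limit, first for scalars measurable with respect to some $\F_{\sigma_m}$ and then for general $\F_{\sigma-}$-scalars by approximation; conditional continuity follows from $\E[p_{\sigma-}(H)^2\,|\,\F_{\sigma-}]\le C^2\,\E[H^2\,|\,\F_{\sigma-}]$, obtained by conditional Fatou from $p_{\sigma_m}(H)^2\le C^2\E[H^2\,|\,\F_{\sigma_m}]$ and martingale convergence.

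The substance is \ref{ubp1}~$\Rightarrow$~\ref{ubp3}. The densities $\sigmZ_T$ exist by the conditional Riesz theorem and convergence on a dense set is free (take $D=L^2$), so everything reduces to proving $C=\sup_m R_m<\infty$ a.s. One should note first that a soft Banach--Steinhaus over the $F$-space $L^0$ is \emph{not} enough here: equicontinuity of $\{p_{\sigma_m}\}$ yields only uniform-in-$m$ bounds \emph{in probability} on each $R_m$, whereas what is wanted is the strictly stronger pathwise bound $\sup_m R_m<\infty$. I would therefore argue by contradiction via a conditional gliding hump. Assume $B:=\{\sup_m R_m=\infty\}\in\F_{\sigma-}$ has $P(B)>0$, and select blow-up indices by the hitting times $\rho_k:=\inf\{m:R_m\ge t_k\}$ for thresholds $t_k\uparrow\infty$ to be fixed; these satisfy $\{\rho_k\le m\}\in\F_{\sigma_m}$ and are finite on $B$. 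The normalised directions
\[
\Phi_k:=\sum_m\one_{\{\rho_k=m\}}\,\frac{\sigmZ_T}{R_m}
\]
are $\F_T$-measurable, satisfy $\E[\Phi_k^2\,|\,\F_{\sigma_{\rho_k}}]=\one_{\{\rho_k<\infty\}}$ (hence $\Phi_k\in L^2$ with $\|\Phi_k\|_{L^2}\le1$), and realise the norm, $p_{\sigma_{\rho_k}}(\Phi_k)=R_{\rho_k}\ge t_k$. For a test function $H:=\sum_k\ve_k\Phi_k$ with summable weights one has the splitting
\[
p_{\sigma_{\rho_k}}(H)=\ve_kR_{\rho_k}+\sum_{j>k}\ve_j\,p_{\sigma_{\rho_k}}(\Phi_j)+\sum_{j<k}\ve_j\,p_{\sigma_{\rho_k}}(\Phi_j),
\]
and the aim is to force $\sup_m|p_{\sigma_m}(H)|=\infty$ on a non-null set, contradicting the convergence of $(p_{\sigma_m}(H))_m$ from \ref{ubp1}. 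The \emph{future} tail is harmless: for $j>k$ the tower property gives $\E[\Phi_j^2\,|\,\F_{\sigma_{\rho_k}}]\le1$, so it is bounded by $R_{\rho_k}\sum_{j>k}\ve_j$, which a geometric choice of $\ve_k$ (ratio $<1/4$) keeps below a fixed fraction of the diagonal term $\ve_kR_{\rho_k}\ge\ve_k t_k$.

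\textbf{The main obstacle is the \emph{past} tail} $\sum_{j<k}\ve_j\,p_{\sigma_{\rho_k}}(\Phi_j)$. Each earlier hump $\Phi_j$ is tuned to be large for its own functional at time $\sigma_{\rho_j}$, and evaluated at the later index $\rho_k$ its conditional second moment $\E[\Phi_j^2\,|\,\F_{\sigma_{\rho_k}}]$ need no longer be controlled by its value $1$ at $\sigma_{\rho_j}$; crucially, the crude bound $|p_{\sigma_{\rho_k}}(\Phi_j)|\le R_{\rho_k}$ is comparable to the diagonal and cannot be tolerated. This interference is precisely the conditional phenomenon with no unconditional analogue, and overcoming it is where the novelty of the principle lies. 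The way out is a \emph{spacing} argument that exploits the full strength of \ref{ubp1}: since each already-constructed $\Phi_j$ is a \emph{fixed} element of $L^2$, the sequence $(p_{\sigma_m}(\Phi_j))_m$ converges by \ref{ubp1} to a finite limit, so after a (random) settling index the later values $p_{\sigma_{\rho_k}}(\Phi_j)$ are close to that limit rather than to the peak $t_j$. I would therefore build the thresholds $t_k$, the hitting times $\rho_k$, and the weights $\ve_k$ \emph{recursively}, enforcing at each stage that $\rho_k$ exceeds the settling index of all earlier humps, and interleave a localisation that discards from $B$ at stage $k$ an event of probability at most $2^{-k}P(B)$ on which a settling index or a limiting value is too large. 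On the resulting non-null good event $G$ with $P(G)\ge P(B)/2$ the past tail stays bounded by a fixed finite quantity, while the diagonal $\ve_k R_{\rho_k}\ge\ve_k t_k$ is driven to infinity by enlarging $t_k$ at each step, yielding $|p_{\sigma_{\rho_k}}(H)|\to\infty$ on $G$. The measurable selection of the $\rho_k$ and of the localising events, together with the bookkeeping that keeps the diagonal growing while all tails remain summable on $G$, is the technical heart of the proof.
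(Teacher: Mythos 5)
Your two soft arcs are fine and essentially the paper's: \ref{ubp2}~$\Rightarrow$~\ref{ubp1} is trivial, and your \ref{ubp3}~$\Rightarrow$~\ref{ubp2} (extension from the dense set via the uniform conditional bound $C$, the increasing $\sigma$-fields with $\bigvee_m\F_{\sigma_m}=\F_{\sigma-}$, and an approximation argument for $\F_{\sigma-}$-linearity) matches the paper's proof. The problem is the core implication \ref{ubp1}~$\Rightarrow$~\ref{ubp3}, and it is a genuine gap, not bookkeeping. Your stated aim is to ``force $\sup_m|p_{\sigma_m}(H)|=\infty$ on a non-null set, contradicting the convergence of $(p_{\sigma_m}(H))_m$.'' That is not a contradiction: $L^0$ convergence is convergence in probability, and a sequence can converge in probability while its pointwise supremum is a.s.\ infinite --- take independent events $E_m$ with $P(E_m)=1/m$ and $X_m=m\one_{E_m}$; then $X_m\to0$ in $L^0$, yet $X_m\geq m$ for infinitely many $m$ almost surely by the second Borel--Cantelli lemma. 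Because your blow-up is located at the \emph{random} hitting indices $\rho_k$, the lower bound $|p_{\sigma_{\rho_k}}(H)|\geq b_k$ on $G$ spreads over the deterministic indices via the events $\{\rho_k=m\}$, each of which may have tiny probability; it never produces what is actually needed to contradict \ref{ubp1}, namely deterministic indices $m_k$ and constants $c_k\to\infty$ with $P(|p_{\sigma_{m_k}}(H)|\geq c_k)\geq\delta>0$ (failure of boundedness in probability). The same conflation of $L^0$ with a.s.\ convergence undermines your ``settling index'': if $(p_{\sigma_m}(\Phi_j))_m$ converges only in probability, there need be no a.s.\ finite random index after which \emph{all} later values stay close to the limit (typewriter sequences), so the spacing device on which your entire past-tail control rests is not available without further subsequence extractions that your recursion, which quantifies over all indices $m$ through the hitting times $\rho_k$, does not permit.

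The paper's proof avoids both difficulties simultaneously and is worth comparing. First, it keeps \emph{deterministic} indices: it extracts a subsequence $(m_k)$ and events $A_k=\{(\E[(\sigmkZ_T)^2\,|\,\F_{\sigma_{m_k}}])^{1/2}\geq100^k\}$ with $\liminf_kP(A_k)\geq P(A)>0$, so a diverging lower bound for $|p_{\sigma_{m_k}}(H)|$ on $A_k$ genuinely contradicts convergence in $L^0$. Second --- and this is the idea you are missing --- it disposes of the past tail not by spacing but by Tao's sequential \emph{sign choice}: with weights $10^{-k}$ and normalised humps $\tilde H_k$, the signs $\ve_k\in\{-1,+1\}$ are chosen recursively (measurably with respect to $\F_{\sigma_{m_k}}$) so that the already-built partial sum and the new diagonal term interfere \emph{constructively}, giving $\big|\E\big[\sigmkZ_T\sum_{n=1}^k\ve_n10^{-n}\tilde H_n\,\big|\,\F_{\sigma_{m_k}}\big]\big|\geq10^{-k}\big(\E[(\sigmkZ_T)^2\,|\,\F_{\sigma_{m_k}}]\big)^{1/2}$; only the future tail then needs conditional Cauchy--Schwarz, exactly as in your proposal. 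With the sign trick, no control of past values is needed at all, so the step you identify as ``the technical heart'' of your argument is precisely the step that should be deleted and replaced.
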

\begin{proof} Trivially, \ref{ubp2} implies \ref{ubp1}. To see that \ref{ubp3} gives \ref{ubp2}, set $p_{\sigma-}(H):=\lim_{m\to\infty}p_{\sigma_m}(H)$ for all $H\in D$. Then, $p_{\sigma-}:D\to L^0(\F_{\sigma-},P)$ is a continuous linear mapping. Indeed, for $H_1,H_2\in D$, we have that
\begin{align*}
|p_{\sigma-}(H_1)-p_{\sigma-}(H_2)|&{}=|\lim_{m\to\infty}p_{\sigma_m}(H_1)-\lim_{m\to\infty}p_{\sigma_m}(H_2)|\\
&{}=\lim_{m\to\infty}|p_{\sigma_m}(H_1-H_2)|\\
&{}=\lim_{m\to\infty}|\E[\sigmZ_T(H_1-H_2)\,|\,\F_{\sigma_m}]|\\
&{}\leq \lim_{m\to\infty}\left(\E[(\sigmZ_T)^2\,|\,\F_{\sigma_m}]\right)^{1/2}\left(\E[(H_1-H_2)^2\,|\,\F_{\sigma_m}]\right)^{1/2}\\
&{}\leq C\left(\E[(H_1-H_2)^2\,|\,\F_{\sigma-}]\right)^{1/2},
\end{align*}
where we use that $(\F_{\sigma_m})_{m=1}^\infty$ is increasing and $\bigvee_{m=1}^\infty\F_{\sigma_m}=\F_{\sigma-}$, which also implies that the range of $p_{\sigma-}$ is $L^0(\F_{\sigma-},P)$. Since $D$ is dense in $L^2(\F_T,P)$ and $L^0(\F_{\sigma-},P)$ is complete, we can therefore extend $p_{\sigma-}$ from $D$ to $L^2(\F_T,P)$ by continuity. Note that this implies that $p_{\sigma-}(H)=\lim_{m\to\infty}p_{\sigma_m}(H)$ for all $H\in L^2(\F_T,P)$ and hence that $p_{\sigma-}$ is $\F_{\sigma_m}$-linear for all $m\in\N$. The $\F_{\sigma-}$-linearity then follows from the continuity of $p_{\sigma-}$ and the fact that the indicator function $\one_A$ of every set $A\in\F_{\sigma-}$ can be approximated by the indicator functions $\one_{A_m}$ of sets $A_m\in\F_{\sigma_m}$ in $L^0$ by Caratheodory's extension theorem, since $\bigvee_{m=1}^\infty\F_{\sigma_m}=\F_{\sigma-}$.

The proof that \ref{ubp1} yields \ref{ubp3} is a modification of a direct proof of the uniform boundedness principle in Tao \cite[Remark~1.7.6]{tao.10}. By way of contradiction, we suppose that \eqref{eq:ubp} fails. Then, there is a subsequence $(m_k)_{k=1}^\infty$ such that
$$\lim_{k\to\infty}\left(\E[(\sigmkZ_T)^2\,|\,\F_{\sigma_{m_k}}]\right)^{1/2}=\infty$$
on
$A:=\{\sup_{m\in\N}\left(\E[(\sigmZ_T)^2\,|\,\F_{\sigma_m}]\right)^{1/2}=\infty\}$ with $P(A)>0$. By passing to a further subsequence still denoted by $(m_k)_{k=1}^\infty$, we can assume that
\begin{equation}
\lim_{k\to\infty}\left(\E[(\sigmkZ_T)^2\,|\,\F_{\sigma_{m_k}}]\right)^{1/2}\one_{A_k}=\infty\label{pr:ubp:1}
\end{equation}
on $A$ for $A_k:=\left\{\left(\E[(\sigmkZ_T)^2\,|\,\F_{\sigma_{m_k}}]\right)^{1/2}\geq 100^k\right\}\in\F_{\sigma_{m_k}}$.

Set $\tilde{H}_k:=\frac{\sigmkZ_T}{\left(\E[(\sigmkZ_T)^2\,|\,\F_{\sigma_{m_k}}]\right)^{1/2}}.$ Then, $\tilde{H}_k$ is in $L^2(\F_T,P)$ with $\left(\E[(\tilde{H}_k)^2\,|\,\F_{\sigma_{m_k}}]\right)^{1/2}=1$. Therefore, we have that $H=\sum_{k=1}^\infty \ve_k 10^{-k} \tilde{H}_k\in L^2$ for all choices of signs $\ve_k\in\{-1,+1\}$. Moreover, we can choose the signs $\ve_k\in\{-1,+1\}$ successively in the following way. After $\ve_1,\ldots,\ve_{k-1}$ have been fixed, we select $\ve_k\in\{-1,+1\}$ such that
$$\E\left[\sigmkZ_T\left(\sum_{n=1}^k \ve_n 10^{-n} \tilde{H}_n\right)\,\Big|\,\F_{\sigma_{m_k}}\right]\geq10^{-k} \left(\E[(\sigmkZ_T)^2\,|\,\F_{\sigma_{m_k}}]\right)^{1/2}.$$
By the Cauchy-Schwarz and the triangular inequality, this implies
\begin{align}
\E\left[\sigmkZ_T H\,|\,\F_{\sigma_{m_k}}\right]&{}=\E\left[\sigmkZ_T\left(\sum_{n=1}^k \ve_n 10^{-n} \tilde{H}_n+\sum_{n=k+1}^\infty \ve_n 10^{-n} \tilde{H}_n\right)\,\Big|\,\F_{\sigma_{m_k}}\right]\nonumber\\
&\geq10^{-k} \left(\E[(\sigmkZ_T)^2\,|\,\F_{\sigma_{m_k}}]\right)^{1/2}-\left(\E[(\sigmkZ_T)^2\,|\,\F_{\sigma_{m_k}}]\right)^{1/2}\left(\sum_{n=k+1}^\infty 10^{-n}\right)\nonumber\\
&=10^{-k}(1-1/9)\left(\E[(\sigmkZ_T)^2\,|\,\F_{\sigma_{m_k}}]\right)^{1/2}.\label{pr:ubp:2}
\end{align}
But, since $10^{-k}\left(\E[(\sigmkZ_T)^2\,|\,\F_{\sigma_{m_k}}]\right)^{1/2}\geq 10^k$ on $A_k$, this contradicts the convergence of $p_{\sigma_{m_k}}(H)=\E\left[\sigmkZ_T H\,|\,\F_{\sigma_{m_k}}\right]$ in $L^0$ by \eqref{pr:ubp:1} and \eqref{pr:ubp:2}, which completes the proof.
\end{proof}
The next proposition shows that a state price density $\{\tauZ_T\}_{\tau \in \T}$ satisfying the law of one price can be represented in terms of the stochastic exponentials of a local martingale, more precisely, as a square-integrable $\Exp$-density.
\begin{proposition}\label{P:sopZ}
For a family of random variables $\{\tauZ_T\}_{\tau \in \T}$, the following are equivalent.
\begin{enumerate}[(i)]
	\item\label{sopZ.1} The family $\{\tauZ_T\}_{\tau \in \T}$ is a state price density satisfying the law of one price (i.e., properties \ref{spd.1}--\ref{spd.4} of Definition~\ref{D:SPDlop}).
	\item\label{sopZ.3} There is a locally square-integrable martingale $N$ such that $\Exp(N)$ is a square-integrable $\Exp$-density and, for all $\tau \in \T$, one has $\tauZ_T=\tauE(N)_T$. 
\end{enumerate}
\end{proposition}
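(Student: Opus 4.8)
The plan is to prove the two implications separately; the substance is in \ref{sopZ.1}$\Rightarrow$\ref{sopZ.3}.

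For \ref{sopZ.3}$\Rightarrow$\ref{sopZ.1} I would simply read the four axioms of Definition~\ref{D:SPDlop} off the $\Exp$-density. Property~\ref{spd.1} is the defining identity of an $\Exp$-density and \ref{spd.3} is its square-integrability. For time consistency~\ref{spd.2}, the decomposition $(N-N^\sigma)-(N-N^\sigma)^\tau=N-N^\tau$ for $\sigma\le\tau$, together with $[N^\sigma,{}^{\sigma}N]=0$ and Yor's formula, gives the pathwise restart identity ${}^{\sigma}\Exp(N)_T={}^{\sigma}\Exp(N)_\tau\,\tauE(N)_T$ (valid even across the zeros of $\Exp(N)$); conditioning on $\F_\tau$ and using~\ref{spd.1} turns it into ${}^{\sigma}Z_T={}^{\sigma}Z_\tau\,\tauZ_T$. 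For~\ref{spd.4} I would note that, because $N$ is a locally square-integrable martingale, $\langle N\rangle$ is finite-valued, so $\Exp(N)$ can reach $0$ only by a jump and hence has non-vanishing left limits up to its first zero; this makes $p_{\sigma_m}(H)=\E[\sigmZ_T H\,|\,\F_{\sigma_m}]$ convergent in $L^0$ for every $H$, and Proposition~\ref{P:UBP}\,(\ref{ubp1}$\Rightarrow$\ref{ubp3}) then yields the uniform bound $C<\infty$ that is~\ref{spd.4}.

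For \ref{sopZ.1}$\Rightarrow$\ref{sopZ.3} the candidate is the stochastic logarithm of the density. Writing ${}^{0}Z_t:=\E[{}^{0}Z_T\,|\,\F_t]$, time consistency with $\sigma=0$ gives $\tauZ_T={}^{0}Z_T/{}^{0}Z_\tau$ wherever ${}^{0}Z_\tau\neq0$, which is exactly the ratio $\Exp(N)_T/\Exp(N)_\tau$ we want to reproduce. From ${}^{0}Z_\rho={}^{0}Z_\sigma\,{}^{\sigma}Z_\rho$ (a consequence of~\ref{spd.2} and the tower property) one sees that ${}^{0}Z$ is absorbed at $0$. Since $\Exp(N)$ restarts multiplicatively from $1$ after each zero, a single logarithm cannot capture the restarted densities; instead I would build $N$ block by block along the times $T_0=0$, $T_{m+1}=\inf\{t>T_m:{}^{T_m}Z_t=0\}$ of Remark~\ref{R:Tm}, setting $dN=({}^{T_m}Z_-)^{-1}\,d\,{}^{T_m}Z$ on each $\rrbracket T_m,T_{m+1}\rrbracket$, where ${}^{T_m}Z_t:=\E[{}^{T_m}Z_T\,|\,\F_t]$ is the density restarted at $T_m$. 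On each block $\tauE(N)$ then coincides with the normalized restarted density and, because $\Exp(N)$ is absorbed once it jumps to $0$, the blocks respect the absorption of ${}^{T_m}Z$; time consistency~\ref{spd.2} glues them into the single identity $\tauZ_T=\tauE(N)_T$ for all $\tau\in\T$. The $\Exp$-density properties $\E[\tauE(N)_T\,|\,\F_\tau]=1$ and $\E[\tauE(N)_T^2\,|\,\F_\tau]<\infty$ are then just~\ref{spd.1} and~\ref{spd.3}, and $N$ is a locally square-integrable local martingale because ${}^{0}Z$ is (being closed by a conditionally $L^2$ variable) and $({}^{T_m}Z_-)^{-1}$ is locally bounded on each block, via $\Exp(N)-1=\Exp(N)_-\sint N$.

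The crux, and the step that genuinely needs the new axiom~\ref{spd.4}, is that each restarted density \emph{does not reach zero continuously} --- precisely the hypothesis under which $\Log$ is available through \cite[Proposition~2.2]{choulli.al.98}. I expect this to be the main obstacle, because a signed density is not a non-negative supermartingale and the classical absorption arguments do not apply; the local square-integrability of $N$ that was automatic in the converse must here be manufactured from~\ref{spd.4}. I would argue by contradiction: suppose $Z:={}^{T_m}Z$ reaches $0$ continuously along an announcing sequence $(\sigma_n)$ of a predictable time $\sigma$ on some $F\in\F_{\sigma-}$ with $P(F)>0$, so $Z_{\sigma_n}\to0$ on $F$ while $Z_{\sigma_n}\neq0$. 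The conditional-norm bound $C<\infty$ from~\ref{spd.4} gives $|\E[Z_T H\,|\,\F_{\sigma_n}]|=|Z_{\sigma_n}|\,|\E[{}^{\sigma_n}Z_T H\,|\,\F_{\sigma_n}]|\le C\,|Z_{\sigma_n}|\,(\E[H^2\,|\,\F_{\sigma_n}])^{1/2}\to0$ on $F$ for every $H\in L^2$, whence $Z_T=0$ on $F$ and therefore ${}^{\sigma_n}Z_T=Z_T/Z_{\sigma_n}=0$ on $F$. But then~\ref{spd.1} yields $1=\E[{}^{\sigma_n}Z_T\,|\,\F_{\sigma_n}]=\E[{}^{\sigma_n}Z_T\one_{F^c}\,|\,\F_{\sigma_n}]\le C\,(P(F^c\,|\,\F_{\sigma_n}))^{1/2}$ by Cauchy--Schwarz, and restricting to $F$ and letting $n\to\infty$ forces $1\le0$, a contradiction. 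With continuous reaching excluded, the stationary exhaustion $T_m\uparrow T$ and the local-martingale property complete the construction as indicated above.
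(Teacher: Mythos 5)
Your proposal is correct and takes essentially the same route as the paper's own proof: the crux in both is that property \ref{spd.4} of Definition~\ref{D:SPDlop} forbids any restarted density from reaching zero continuously (established by the same conditional Cauchy--Schwarz contradiction against the normalisation \ref{spd.1} along an announcing sequence), after which $N=\sum_m \Log({}^{T_m}\!Z)$ is assembled block by block at the absorption times $T_{m+1}=\inf\{t>T_m:{}^{T_m}\!Z_t=0\}$ and glued into $\tauZ_T=\tauE(N)_T$ via time consistency and Yor's formula, with the converse read off from spd.\ref{spd.1}--\ref{spd.3} and the fact that stochastic exponentials only jump to zero. The only deviations are minor: for property \ref{spd.4} in the converse direction you route through Proposition~\ref{P:UBP} (pointwise $L^0$-convergence implies a uniform conditional bound) where the paper argues directly by contradiction from the non-vanishing left limits of $\TkE(N)$ before its first zero --- both rest on the same structural fact --- and in the forward direction your explicit intermediate conclusion that $Z_T=0$ on $F$ supplies precisely the justification for the equality $\E\big[{}^{\sigma_n}\!Z_T\one_{\{\sigma_n<\sigma\}}\,|\,\F_{\sigma_n}\big]=\E\big[{}^{\sigma_n}\!Z_T\one_{\{\sigma_n<\sigma\}}\one_{F^c}\,|\,\F_{\sigma_n}\big]$ that the paper's display \eqref{eq:220509} states without comment, so your write-up is, if anything, slightly more complete at that point.
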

\begin{proof}
\noindent ``\ref{sopZ.1} $\Rightarrow$ \ref{sopZ.3}'': We adapt the proof technique of \cite{delbaen.schachermayer.96.bej} as given in \cite[Lemma 3.5]{czichowsky.schweizer.13} to our situation. For this, fix one $\tau\in\T$, define $\tauZ=(\tauZ_{t})_{0\leq t\leq T}$ by $\tauZ_{t}=\E[\tauZ_T\,|\,\F_t]$ for $0\leq t\leq T$ and choose $\{\sigma_n\}_{n\in\N}$ and $\sigma$ as follows,
\[\sigma=\inf\{ t>0\,:\,\tauZ_{t}=0\};\qquad \sigma_n=\Inf\left\{t>0\,:\,|\tauZ_t|\leq \frac{1}{n+1}\right\}\wedge T,\qquad n\in\N.\]
Next, let $F=\{\tauZ_{\sigma_-}=0\}$, observing that $\tauZ$ is a locally square-integrable local martingale by property \ref{spd.3} of Definition~\ref{D:SPDlop} and hence $\tauZ_{\sigma_-}$ is well defined. Note that $\tauZ_{t}(\omega)=1$ for $0\leq t\leq\tau(\omega)$ by property \ref{spd.1} of Definition~\ref{D:SPDlop}. The Cauchy--Schwartz inequality yields
\begin{equation}\label{eq:220509}
\begin{split}
\one_{\{\sigma_n <\sigma\}}&{}=
\E\big[{}^{\sigma_n}\!Z_T\one_{\{\sigma_n <\sigma\}}\,\big|\,\F_{\sigma_n}\big]
=\E\big[{}^{\sigma_n}\!Z_T\one_{\{\sigma_n <\sigma\}}\one_{F^c}\,\big|\,\F_{\sigma_n}\big]\\
&{}\leq \E\big[{}^{\sigma_n}\!Z_T^2\one_{\{\sigma_n <\sigma\}}\,\big|\,\F_{\sigma_n}\big]^{\frac{1}{2}}
P[F^c\,|\,\F_{\sigma_n}]^{\frac{1}{2}}=\E\big[{}^{\sigma_n}\!Z_T^2\,\big|\,\F_{\sigma_n}\big]^{\frac{1}{2}}
P[F^c\,|\,\F_{\sigma_n}]^{\frac{1}{2}}. 
\end{split}
\end{equation}
On sending $n\to\infty$, one obtains in view of \eqref{eq:220509} and the bounded conditional second moments at predictable stopping times (by property \ref{spd.4} of Definition~\ref{D:SPDlop}) that
\begin{align*}
\one_F
&{}=
\Lim_{n\to\infty}\one_{\{\sigma_n <\sigma\}}\one_F\leq\limsup_{n\to\infty}\left(\E\big[{}^{\sigma_n}\!Z_T^2\,\big|\,\F_{\sigma_n}\big]^{\frac{1}{2}}\one_F\one_{\{\sigma_n <\sigma\}}P[F^c\,|\,\F_{\sigma_n}]^{\frac{1}{2}}\right)\\
&{}=
\limsup_{n\to\infty}\left(\E\big[{}^{\sigma_n}\!Z_T^2\,\big|\,\F_{\sigma_n}\big]^{\frac{1}{2}}\right)\one_F\one_{F^c}\leq C\one_F\one_{F^c}=0,
\end{align*}
where $C=\sup_{n\in\N}\E\big[{}^{\sigma_n}\!Z_T^2\,\big|\,\F_{\sigma_n}\big]$. This yields $P(F)=0$ and hence $\tauZ_{\sigma-}\neq 0$, which means that each $\tauZ$ can only jump to zero. Therefore, $\tauZ_-\ne0$ on $\llbracket0,\sigma\rrbracket$ and $\tauZ=0$ on $\llbracket\sigma,T\rrbracket$ so that its stochastic logarithm $\Log(\tauZ)=\frac{\mathbbm{1}_{\llbracket0,\sigma\rrbracket}}{\tauZ_-}\sint \tauZ$ is well defined and gives $\tauZ=\Exp(\Log(\tauZ))$; see \cite[Proposition 2.2]{choulli.al.98}. Note that, since $\tauZ$ is a locally square-integrable martingale, $\Log(\tauZ)$ is a locally square-integrable martingale, too. 

Let $(T_n)_{n=0}^\infty$ be the sequence of stopping times given by $T_0=0$ and 
$$T_{n+1}=\inf\{t>T_{n}\,|\,{}^{T_n}\!Z_t=0\}\wedge T,\qquad n\in\N.$$ 
Then, because each ${}^{T_n}\! Z$ can only jump to zero, the sequence $(T_n)_{n=0}^\infty$ of stopping times converges stationarily to $T$. Since $\Log({}^{T_n}\!Z)_t(\omega)=0$ for $t\notin(T_n(\omega), T_{n+1}(\omega)]$ and the sequence $(T_n)_{n=0}^\infty$ of stopping times converges stationarily to $T$, defining $N:=\sum_{n=0}^\infty\Log({}^{T_n}\!Z)$ yields a locally square-integrable martingale. Fix $\tau\in\T$. Then, for each $\omega\in\Omega$, there is only one $n(\omega)\in\N\cup\{0\}$ such that $\tau(\omega)\in[ T_{n(\omega)}(\omega),T_{n(\omega)+1}(\omega))$. Observe that the mapping $\omega\mapsto n(\omega)$ is $\F_\tau$-measurable, since $\{n(\omega)=k\}=\{T_k\leq \tau< T_{k+1}\}\in\F_\tau$ for all $k\in\N\cup\{0\}$. Combining the time consistency of $\tauZ$ (property \ref{spd.2} of Definition~\ref{D:SPDlop}) with Yor's formula for stochastic exponentials and the definition of $N$ yields 
$$\tauZ_T(\omega)=\frac{{}^{T_{n(\omega)}}\!Z_T(\omega)}{{}^{T_{n(\omega)}}\!Z_{\tau}(\omega)}=\frac{\Exp({}^{T_{n(\omega)}}\!N)_T(\omega)}{\Exp({}^{T_{n(\omega)}}N)_{\tau})(\omega)}=\Exp(\tauN)_T(\omega)=\tauE(N)_T(\omega)$$
and hence $\tauZ_T=\tauE(N)_T$. By the square integrability of $\{\tauZ_T\}_{\tau\in\T}$ (property \ref{spd.2} of Definition~\ref{D:SPDlop}), the latter also implies that $\E[\tauE(N)_T^2\,|\,\F_\tau]=\E[(\tauZ_T)^2\,|\,\F_\tau]<\infty$ for all $\tau\in\T$ so that $\Exp(N)$ is a square-integrable $\Exp$-density (Definitions~\ref{D:Emart} and \ref{D:Edens}).\medskip

\noindent ``\ref{sopZ.3} $\Rightarrow$ \ref{sopZ.1}'': We only need to verify that setting $\tauZ_T:=\tauE(N)_T$ for $\tau\in\T$ defines a state price density $\{\tauZ_T\}_{\tau\in\T}$ satisfying the law of one price. Indeed, the family $\{\tauE(N)\}_{\tau\in\T}$ being an $\Exp$-density (as in Definition \ref{D:Edens}) implies the correct pricing of the risk-free asset (property \ref{spd.1} of Definition~\ref{D:SPDlop}) as well as the time consistency (property \ref{spd.2} of Definition~\ref{D:SPDlop}) by applying Yor's formula for stochastic exponentials. The conditional square integrability (property \ref{spd.3} of Definition~\ref{D:SPDlop}) follows directly from the fact that the $\Exp$-density $\{\tauE(N)\}_{\tau\in\T}$ is square integrable (as in Definition~\ref{D:Edens}). We prove the bounded conditional second moments at predictable stopping times (property \ref{spd.4} of Definition~\ref{D:SPDlop}) by contradiction. For this, let $\tau\in\T$ be a predictable stopping time and $(\tau_n)_{n=1}^\infty$ be an announcing sequence of $\tau$ such that $P(F_1)>0$ for $F_1:=\{\sup_{n\in\N}\E[{}^{\tau_n}\!\Exp(N)_T^2\,|\,\F_{\tau_n}]=\infty\}$. Note that $F_1\subseteq\{\tau>0\}$ by the conditional square integrability (property \ref{spd.3} of Definition~\ref{D:SPDlop}). Let $(T_m)_{m=0}^\infty$ be the sequence of stopping times given by $T_0=0$ and 
$$T_{m+1}=\inf\big\{t>T_{m}\,|\,\TmE(N)_t=0\big\},\qquad m\in\N.$$ 
Because each $\TmE(N)$ can only jump to zero, the sequence $(T_m)_{m=0}^\infty$ converges stationarily to $\infty$. Therefore, there is $k\in\N$ such that $P(F_2)>0$ with $F_2:=\{T_{k}<\tau\leq T_{k+1}\}\cap F_1$. On $F_2$, we have for sufficiently large $n$ that $\TkE(N)_T=\TkE(N)_{\tau_n}{}^{\tau_n}\!\Exp(N)_T$. Since $\E[\TkE(N)_T^2\,|\,\F_{T_k}]<\infty$, this yields that $\lim_{n\to\infty}\TkE(N)_{\tau_n}=\TkE(N)_{\tau-}=0$ on $F_2$. The latter contradicts the fact that $\TkE(N)_{\tau-}\ne0$ on $\{T_{k}<\tau\leq T_{k+1}\}$ by the definition of $T_{k+1}$, which completes the proof. 
\end{proof}
\begin{remark}
The proof of Proposition \ref{P:sopZ} shows that the condition \ref{sopZ.1} can be written in the following equivalent form.
\begin{enumerate}
\item[(i')]\label{sopZ.2}  $\{\tauZ_T\}_{\tau \in \T}$ satisfies properties \ref{spd.1}--\ref{spd.3} of Definition~\ref{D:SPDlop} and $\tauZ=(\tauZ_t)_{0\leq t\leq T}$ does not reach zero continuously and is absorbed in zero in the sense of \eqref{eq:dnrzcaz} for each $\tau \in \T$. 
\end{enumerate}
\end{remark}


\subsection{Proof of Theorem \ref{T:M1}}
We shall now construct a specific family $\{\tauhZ_T\}_{\tau\in\T}$ of variance-optimal state price densities as discussed in Remark \ref{rem:vo}.
\begin{lemma}\label{lem:mart}
For any $[0,T]$-valued stopping time $\tau$, define the square-integrable martingales $\hat{M}^{(\tau)}=(\hat{M}^{(\tau)}_t)_{0\leq t\leq T}$ by
\begin{equation}
\hat{M}^{(\tau)}_t:=\E\big[\big(1-\hat{G}^{(\tau)}_T\big)\,|\,\mathcal{F}_t\big],\quad 0\leq t\leq T.\label{lem:def:M}
\end{equation}
Then, the process
\begin{equation}
\hat{M}^{(\tau)}(x+\vt\sint S)=\left(\hat{M}^{(\tau)}_t(x+\vt\sint S_t)\right)_{0\leq t\leq T}\label{lem:mart:eq1}
\end{equation}
is a martingale for any $x\in L^2(\F_0,P)$ and $\vt\in \TB_\tau(0)$ and
\begin{equation}
L_{\tau}=\E\Big[\big(1-\hat{G}^{(\tau)}_T\big)^2\,\big|\,\mathcal{F}_{\tau}\Big]=\hat{M}^{(\tau)}_{\tau}.
\label{eq:rep:op:2}
\end{equation}
\end{lemma}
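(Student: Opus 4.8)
The starting point is that $\hat{G}^{(\tau)}_T$ is, by construction, the conditional $L^2(\F_\tau)$-projection of the constant $1$ onto the closed $L^0(\F_\tau)$-module $\cl\cGtau=\cl\{\vt\sint S_T:\vt\in\Theta_\tau\}$, i.e.\ the minimiser in \eqref{A} with value $L_\tau=\E[(1-\hat{G}^{(\tau)}_T)^2\,|\,\F_\tau]$ (this is the first equality in \eqref{eq:rep:op:2}). Writing $R:=1-\hat{G}^{(\tau)}_T\in L^2$, the conditional projection theorem of Hansen and Richard \cite[Theorem~2.1]{hansen.richard.87} supplies the first-order condition $\E[R\,W\,|\,\F_\tau]=0$ for every $W\in\cl\cGtau$, and $\hat{M}^{(\tau)}$ of \eqref{lem:def:M} is the square-integrable martingale closing $R$. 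The identity $L_\tau=\hat{M}^{(\tau)}_\tau$ is then immediate: since $\hat{G}^{(\tau)}_T\in\cl\cGtau$, taking $W=\hat{G}^{(\tau)}_T$ gives $\E[R\,\hat{G}^{(\tau)}_T\,|\,\F_\tau]=0$, whence
$$L_\tau=\E[R^2\,|\,\F_\tau]=\E[R\,|\,\F_\tau]-\E[R\,\hat{G}^{(\tau)}_T\,|\,\F_\tau]=\E[R\,|\,\F_\tau]=\hat{M}^{(\tau)}_\tau.$$

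For the martingale property in \eqref{lem:mart:eq1} I would first upgrade the orthogonality to later stopping times. For any stopping time $\rho\geq\tau$ one has $\cl\cG_\rho\subseteq\cl\cGtau$ (because $\Theta_\rho\subseteq\Theta_\tau$), and $\cl\cG_\rho$ is stable under multiplication by $\one_A$, $A\in\F_\rho$; hence for $W\in\cl\cG_\rho$ the first-order condition applied to $\one_A W\in\cl\cGtau$ gives $\E[R\,\one_A W\,|\,\F_\tau]=0$, and hence $\E[R\,\one_A W]=0$ for all $A\in\F_\rho$, i.e.\ $\E[R\,W\,|\,\F_\rho]=0$. Fixing a deterministic $t$ and a \emph{simple} $\vt\in\Theta_\tau$, I decompose $\vt\sint S_T=\vt\sint S_t+(\vt\one_{\rrbracket t,T\rrbracket})\sint S_T$ with $\vt\sint S_t$ being $\F_t$-measurable, so that
$$\E[R\,(\vt\sint S_T)\,|\,\F_t]=(\vt\sint S_t)\,\E[R\,|\,\F_t]+\E\big[R\,(\vt\one_{\rrbracket t,T\rrbracket})\sint S_T\,\big|\,\F_t\big].$$
Here $\vt\one_{\rrbracket t,T\rrbracket}$ is again simple and lies in $\Theta_{t\vee\tau}$, so its terminal wealth is in $\cl\cG_{t\vee\tau}$; applying the upgraded orthogonality at $\rho=t\vee\tau\geq\tau$ and then the tower property (valid since $\F_t\subseteq\F_{t\vee\tau}$) makes the last term vanish. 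This gives $\hat{M}^{(\tau)}_t(\vt\sint S_t)=\E[R\,(\vt\sint S_T)\,|\,\F_t]$, a (uniformly integrable) martingale, for every simple $\vt\in\Theta_\tau$.

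To pass from simple to admissible $\vt\in\TBtau$ I take an approximating sequence $\vt^n\in\Theta_\tau$ with $\vt^n\sint S_T\to\vt\sint S_T$ in $L^2$ and $\vt^n\sint S_t\to\vt\sint S_t$ in $L^0$: the right-hand side $\E[R\,(\vt^n\sint S_T)\,|\,\F_t]$ then converges in $L^1$ (as $R\in L^2$), while the left-hand side $\hat{M}^{(\tau)}_t(\vt^n\sint S_t)$ converges in $L^0$, so $\hat{M}^{(\tau)}_t(\vt\sint S_t)=\E[R\,(\vt\sint S_T)\,|\,\F_t]$ remains a martingale (in particular the product is integrable). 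Finally, since $x\in L^2(\F_0)$ is $\F_0$-measurable, $x\,\hat{M}^{(\tau)}$ is a martingale, and adding it yields the martingale property of $\hat{M}^{(\tau)}(x+\vt\sint S)$.

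I expect the main obstacle to be the bookkeeping around conditioning at stopping times---establishing the upgraded orthogonality $\E[R\,W\,|\,\F_\rho]=0$ for $\rho\geq\tau$ and verifying $\vt\one_{\rrbracket t,T\rrbracket}\in\Theta_{t\vee\tau}$---together with the structural input used in the last step, namely that every $\vt\in\TBtau$ is approximable in $L^2$ at maturity (and in $L^0$ at intermediate times) by simple strategies in $\Theta_\tau$, equivalently that $\{\vt\sint S_T:\vt\in\TBtau\}\subseteq\cl\cGtau$. Reducing to simple strategies before taking limits is precisely what keeps the orthogonality argument clean and sidesteps any delicate claim about truncating an admissible strategy to $\rrbracket t,T\rrbracket$.
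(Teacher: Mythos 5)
Your proposal is correct and takes essentially the same route as the paper: both arguments rest on the orthogonality (first-order) condition for the conditional projection $\hat{G}^{(\tau)}_T$ applied to time-restricted simple strategies in $\Theta_\tau$, followed by the same limiting argument to pass from $\Theta_\tau$ to $\TB_\tau$ (an approximation input the paper likewise invokes without further justification). The only differences are cosmetic: you exhibit $\hat{M}^{(\tau)}_t(\vt\sint S_t)$ as the closed martingale $\E\big[(1-\hat{G}^{(\tau)}_T)(\vt\sint S_T)\,\big|\,\F_t\big]$ via orthogonality conditional on $\F_{t\vee\tau}$, where the paper checks the two-point martingale property using the perturbation $\one_{F\times(s,t]}\vt$, and you derive \eqref{eq:rep:op:2} directly from $\E\big[(1-\hat{G}^{(\tau)}_T)\hat{G}^{(\tau)}_T\,\big|\,\F_\tau\big]=0$ rather than through an approximating sequence for $\hat{G}^{(\tau)}_T$ (in doing so you also avoid the sign slip in the paper's final display).
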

\begin{proof}
The martingale property of \eqref{lem:mart:eq1} follows from the first order condition of optimality. Indeed, $\vt\in\Theta_\tau$ implies that $\one_{F\times(s,t]}\vt$ is in $\Theta_\tau$ for all $s\leq t$ and arbitrary $F\in\F_s$. Therefore, since $\hat{G}^{(\tau)}_T$ is the orthogonal projection of $1$ onto $\cl\,\{\vt\sint S_T\,:\,\vt\in\Theta_\tau\}$, we have that
$$
\E\big[\one_F\big(\vt\sint S_t-\vt\sint S_s\big)\big(1-\hat{G}^{(\tau)}_T\big)\big]
=\E\big[ \big( (\one_{F\times(s,t]}\vt)\sint S_T\big) \big(1-\hat{G}^{(\tau)}_T\big) \big]=0.
$$
As $F\in\F_s$ was arbitrary, the latter yields, by the tower property of conditional expectations, that
\begin{align*}
\E\big[(\vt\sint S_t)\hat{M}^{(\tau)}_t\,\big|\,\F_s\big]&{}=\E\big[(\vt\sint S_t)(1-\hat{G}^{(\tau)}_T)\,\big|\,\F_s\big]\\
&{}=\E\big[(\vt\sint S_s)(1-\hat{G}^{(\tau)}_T)\,\big|\,\F_s\big]=(\vt\sint S_s)\hat{M}^{(\tau)}_s.
\end{align*}
The case $\vt\in\TB_\tau$ then follows by approximating $\vt\in\TB_\tau$ by a sequence $(\vt^n)_{n=1}^\infty$ of strategies $\vt^n=(\vt^n_t)_{0\leq t\leq T}\in\Theta_\tau$. This yields
\begin{align*} 
\E\big[(\vt\sint S_T)\hat{M}^{(\tau)}_T\,\big|\,\F_t\big]&=\E\left[\left(\lim_{n\to\infty}\vt^n\sint S_T\right)\hat{M}^{(\tau)}_T\,\big|\,\F_t\right]\\
&=\lim_{n\to\infty}\E\big[(\vt^n\sint S_T)\hat{M}^{(\tau)}_T\,\big|\,\F_t\big]\\
&=\lim_{n\to\infty}(\vt^n\sint S_t)\hat{M}^{(\tau)}_t=(\vt\sint S_t)\hat{M}^{(\tau)}_t,
\end{align*}
where we have made use of $\lim_{n\to\infty}\vt^n\sint S_T\xrightarrow{L^2}\vt\sint S_T$ in the first and second equality and $\lim_{n\to\infty}\vt^n\sint S_t\xrightarrow{P}\vt\sint S_t$ in the last equality.

Finally, let $(\vp^n)_{n=1}^\infty$ be a sequence of trading strategies $\vp^n=(\vp^n_t)_{0\leq t\leq T}\in\TB_\tau$ such that $\vp^n\sint S_T\xrightarrow{L^2}\hat{G}^{(\tau)}_T$. Then,
\begin{align*}
L_{\tau}&=\E\left[\left(1-\hat{G}^{(\tau)}_T\right)^2\,\Big|\,\mathcal{F}_{\tau}\right]\\
&{}=\E\left[1-\hat{G}^{(\tau)}_T\,\big|\,\mathcal{F}_{\tau}\right]+\E\left[\left(1-\hat{G}^{(\tau)}_T\right)\left(\lim_{n\to\infty}\vp^n\sint S_T\right)\,\Big|\,\mathcal{F}_{\tau}\right]\\
&{}=\E\left[1-\hat{G}^{(\tau)}_T\,\Big|\,\mathcal{F}_{\tau}\right]+\lim_{n\to\infty}\E\left[\left(1-\hat{G}^{(\tau)}_T\right)(\vp^n\sint S_T)\,\Big|\,\mathcal{F}_{\tau}\right]=\E\left[1-\hat{G}^{(\tau)}_T\,|\,\mathcal{F}_{\tau}\right].\tag*{\qedhere}
\end{align*}
\end{proof}
Using \eqref{eq:rep:op:2}, we can define processes $\hat{G}^{(\tau)}=(\hat{G}^{(\tau)}_t)_{0\leq t\leq T}$ as a substitute for the stochastic integrals $(\vp^{(\tau)}(1,0)\sint S_t)_{0\leq t\leq T}$.  Note, however, that our definition \eqref{def:G} below needs the weak  LOP condition that $L>0$ in order to recover the substitute wealth process $\hat{G}^{(\tau)}=(\hat{G}^{(\tau)}_t)_{0\leq t\leq T}$ from its terminal value $\hat{G}^{(\tau)}_T$.
\begin{lemma}\label{lem:mult}
Suppose that $L>0$. For each $\tau\in\T$, define the processes $\hat{G}^{(\tau)}=(\hat{G}^{(\tau)}_t)_{0\leq t\leq T}$ by 
\begin{equation}
\hat{G}^{(\tau)}_t=\begin{cases}0,&0\leq t<\tau,\\
1-\frac{\hat{M}^{(\tau)}_t}{L_t},&\tau\leq t\leq T.\end{cases}\label{def:G}
\end{equation}
Then, for all stopping times $\T\ni\sigma\geq\tau $, we have that
\begin{equation}
(1-\hat{G}^{(\tau)}_T)=(1-\hat{G}^{(\tau)}_\sigma)(1-\hat{G}^{(\sigma)}_T).\label{lem:mult:eq}
\end{equation}
\end{lemma}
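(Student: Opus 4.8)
The plan is to read \eqref{lem:mult:eq} as a rigorous dynamic-programming (Bellman) identity and to verify it through the orthogonal-projection description of $\hat{G}^{(\tau)}_T$ and $\hat{G}^{(\sigma)}_T$. Since $\Theta_T$ permits no trading, $L_T=1$, so \eqref{def:G} gives $1-\hat{G}^{(\sigma)}_T=\hat{M}^{(\sigma)}_T=:R^{(\sigma)}$ and, for $\sigma\geq\tau$, the $\F_\sigma$-measurable factor $\Lambda:=1-\hat{G}^{(\tau)}_\sigma=\hat{M}^{(\tau)}_\sigma/L_\sigma$. Writing $R^{(\tau)}:=1-\hat{G}^{(\tau)}_T$, the claim \eqref{lem:mult:eq} becomes $R^{(\tau)}=\Lambda R^{(\sigma)}$. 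Now $\hat{G}^{(\tau)}_T$ is the orthogonal projection of $1$ onto the closed subspace $\mathcal{K}_\tau:=\cl\{\vt\sint S_T:\vt\in\Theta_\tau\}$, so $R^{(\tau)}$ is the \emph{unique} element of $1-\mathcal{K}_\tau$ orthogonal to $\mathcal{K}_\tau$; note that $\sigma\geq\tau$ forces $\mathcal{K}_\sigma\subseteq\mathcal{K}_\tau$. I would therefore prove $R^{(\tau)}=\Lambda R^{(\sigma)}$ by checking that $\Lambda R^{(\sigma)}$ also enjoys these two properties, the identity then following from uniqueness of the projection. The underlying intuition is that one hedges $1$ from $\tau$ by first steering to the optimal wealth $\hat{G}^{(\tau)}_\sigma$ at $\sigma$ and then optimally hedging the $\F_\sigma$-measurable residual claim $1-\hat{G}^{(\tau)}_\sigma$ from $\sigma$ onwards.

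For the orthogonality $\Lambda R^{(\sigma)}\perp\mathcal{K}_\tau$, I would take $\vt\in\Theta_\tau$ and split $\vt=\vt\one_{\rrbracket\tau,\sigma\rrbracket}+\eta$ with $\eta:=\vt\one_{\rrbracket\sigma,T\rrbracket}\in\Theta_\sigma$, so that $\vt\sint S_T=\vt\sint S_\sigma+\eta\sint S_T$ and $\vt\sint S_\sigma$ is $\F_\sigma$-measurable. Applying Lemma~\ref{lem:mart} to the martingale $\hat{M}^{(\sigma)}(\eta\sint S)$ between $\sigma$ and $T$, where $\eta\sint S_\sigma=0$, yields $\E[R^{(\sigma)}\,\eta\sint S_T\mid\F_\sigma]=0$ and hence $\E[\Lambda R^{(\sigma)}\,\eta\sint S_T]=0$. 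For the part up to $\sigma$, conditioning on $\F_\sigma$ and using $\E[R^{(\sigma)}\mid\F_\sigma]=L_\sigma$ from \eqref{eq:rep:op:2} converts the contribution into $\E[\hat{M}^{(\tau)}_\sigma\,\vt\sint S_\sigma]$, which vanishes because $\hat{M}^{(\tau)}(\vt\sint S)$ is by Lemma~\ref{lem:mart} a martingale null at $0$ with $\vt\sint S_0=0$. Adding the two contributions gives $\E[\Lambda R^{(\sigma)}\,\vt\sint S_T]=0$ for every $\vt\in\Theta_\tau$. This part is routine once Lemma~\ref{lem:mart} is available.

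The decisive step is membership, $1-\Lambda R^{(\sigma)}=\hat{G}^{(\tau)}_\sigma+\Lambda\hat{G}^{(\sigma)}_T\in\mathcal{K}_\tau$, and I expect it to be the main obstacle. I would exhibit the right-hand side as a limit of terminal wealths in $\Theta_\tau$: choosing $\vp^n\in\Theta_\tau$ with $\vp^n\sint S_T\to\hat{G}^{(\tau)}_T$ and $\psi^m\in\Theta_\sigma$ with $\psi^m\sint S_T\to\hat{G}^{(\sigma)}_T$, one runs $\vp^n$ on $\rrbracket\tau,\sigma\rrbracket$ and the rescaled strategy $(1-\vp^n\sint S_\sigma)\psi^m$ on $\rrbracket\sigma,T\rrbracket$, whose terminal wealth converges to $\hat{G}^{(\tau)}_\sigma+\Lambda\hat{G}^{(\sigma)}_T$ \emph{provided} the intermediate convergence $\vp^n\sint S_\sigma\to\hat{G}^{(\tau)}_\sigma$ holds. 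This is exactly where the hypothesis $L>0$ is needed: conditioning on $\F_\sigma$ and minimising over the continuation $\vp^n\one_{\rrbracket\sigma,T\rrbracket}\in\Theta_\sigma$ gives $\E[(1-\vp^n\sint S_T)^2\mid\F_\sigma]\geq(1-\vp^n\sint S_\sigma)^2L_\sigma$, so $\vp^n\sint S_\sigma$ is a minimising sequence for $\min_w\E[(1-w)^2L_\sigma\mid\F_\tau]$, whose unique minimiser is $\hat{G}^{(\tau)}_\sigma$; strict convexity in the weighted space $L^2(L_\sigma\,\d P)$ (using $L_\sigma>0$) then forces $L^2(L_\sigma\,\d P)$- and hence $L^0$-convergence. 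A second, intertwined difficulty is that $L$ need not be bounded below, so $\Lambda$ may fail to be square integrable and $\Lambda R^{(\sigma)}$ may not lie in $L^2$; I would circumvent this by localising on the $\F_\sigma$-measurable sets $\{|\Lambda|\leq k\}\uparrow\Omega$ (equivalently, by working in the conditional module $L^2(P\,|\,\F_\sigma)$, where $\E[(\Lambda R^{(\sigma)})^2\mid\F_\sigma]=\Lambda^2 L_\sigma<\infty$ a.s.), carrying out the bounded rescalings there and passing to the limit in the generalised conditional-expectation sense used throughout the paper. Combining the orthogonality with this membership, uniqueness of the projection of $1$ onto $\mathcal{K}_\tau$ delivers $\Lambda R^{(\sigma)}=R^{(\tau)}$, which is precisely \eqref{lem:mult:eq}.
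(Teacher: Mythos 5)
Your architecture---splice an approximating sequence for $\hat{G}^{(\tau)}_T$ on $\rrbracket\tau,\sigma\rrbracket$ with a rescaled approximating sequence for $\hat{G}^{(\sigma)}_T$ on $\rrbracket\sigma,T\rrbracket$, then conclude by uniqueness of the orthogonal decomposition of $1$ relative to $\mathcal{K}_\tau$---is in the same family as the paper's proof, and your orthogonality computation via Lemma~\ref{lem:mart} is sound. The genuine gap is the clause ``\,\emph{whose unique minimiser is} $\hat{G}^{(\tau)}_\sigma$.'' The convexity/parallelogram argument in $L^2(L_\sigma\,\d P)$ does show that $\vp^n\sint S_\sigma$ converges to the projection $w^\ast$ of $1$ onto the closure of $\{\vt\sint S_\sigma:\vt\in\Theta_\tau\}$ in that weighted space; but nothing you write identifies $w^\ast$ with $\hat{G}^{(\tau)}_\sigma=1-\hat{M}^{(\tau)}_\sigma/L_\sigma$, which in the lemma is \emph{defined by a formula}, not as an optimiser. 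The obvious justification---$\E[\hat{M}^{(\tau)}_\sigma\,\vt\sint S_\sigma]=0$ by Lemma~\ref{lem:mart}---supplies only the orthogonality half of the characterisation of a projection; you would also need $\hat{G}^{(\tau)}_\sigma$ to \emph{lie in} the weighted closure of intermediate wealths, which is essentially the membership statement you are trying to establish, so the identification as stated is circular. And your proof hinges on it: your orthogonality step concerns the candidate $\Lambda R^{(\sigma)}$ with $\Lambda=\hat{M}^{(\tau)}_\sigma/L_\sigma$, while your splice produces $w^\ast+(1-w^\ast)\hat{G}^{(\sigma)}_T$; unless $w^\ast=\hat{G}^{(\tau)}_\sigma$, the two halves refer to different candidates and the uniqueness argument cannot close. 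The gap is fillable with tools you already invoke: from Lemma~\ref{lem:mart} at level $\sigma$ one gets, for $\eta^n:=\vp^n\one_{\rrbracket\sigma,T\rrbracket}\in\Theta_\sigma$,
\begin{equation*}
(1-\vp^n\sint S_\sigma)L_\sigma=\E\big[(1-\vp^n\sint S_T)\hat{M}^{(\sigma)}_T\,\big|\,\F_\sigma\big]
\xrightarrow{L^1}\E\big[R^{(\tau)}R^{(\sigma)}\,\big|\,\F_\sigma\big]=\hat{M}^{(\tau)}_\sigma,
\end{equation*}
the last equality because $\E[R^{(\tau)}\,\psi\sint S_T\,|\,\F_\sigma]=0$ for $\psi\in\Theta_\sigma$ (Lemma~\ref{lem:mart} at level $\tau$, optional sampling), hence $\E[R^{(\tau)}\hat{G}^{(\sigma)}_T\,|\,\F_\sigma]=0$; dividing by $L_\sigma>0$ gives $\vp^n\sint S_\sigma\to\hat{G}^{(\tau)}_\sigma$ in probability. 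But this computation, not strict convexity, is the decisive step, and it is absent from your proposal.

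A second unresolved point is integrability. The uniqueness-of-projection argument lives in the Hilbert space $L^2(P)$, so it needs $\Lambda R^{(\sigma)}\in L^2(P)$, not merely $\E[(\Lambda R^{(\sigma)})^2\,|\,\F_\sigma]<\infty$ a.s. Localising on $\{|\Lambda|\leq k\}$ does not repair this: these sets are only $\F_\sigma$-measurable, and $\mathcal{K}_\tau$ is \emph{not} stable under multiplication by $\one_{\{|\Lambda|\leq k\}}$ (such an indicator cannot be implemented by a strategy starting at $\tau$), so neither the membership statement nor the projection identity localises along them, and to remove the localisation at the end you again need the limit in $L^2$. The square integrability does hold, but it must be proved---e.g.\ via conditional Cauchy--Schwarz once $\hat{M}^{(\tau)}_\sigma=\E[R^{(\tau)}R^{(\sigma)}\,|\,\F_\sigma]$ is known, which yields $(\hat{M}^{(\tau)}_\sigma)^2/L_\sigma\leq\E[(R^{(\tau)})^2\,|\,\F_\sigma]$ and hence $\E[(\Lambda R^{(\sigma)})^2]\leq\E[L_\tau]<\infty$. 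The paper sidesteps both difficulties by \emph{not} naming the intermediate limit in advance: it uses the inequality $(1-\vt\sint S_\sigma)^2L_\sigma\leq\E[(1-\vt\sint S_T)^2\,|\,\F_\sigma]$ plus Mazur's lemma in $L^2(P^\sigma)$, $\d P^\sigma\propto L_\sigma\,\d P$, to extract convex combinations whose time-$\sigma$ wealths converge to some unnamed $X_\sigma$ (the same inequality, passed to the limit, then delivers the needed square integrability of $(1-X_\sigma)(1-\hat{G}^{(\sigma)}_T)$); it splices with an Egorov argument to keep the rescaled continuation a genuine simple strategy; it shows the spliced wealth attains the conditional minimum, hence equals $\hat{G}^{(\tau)}_T$; and only \emph{then} identifies $X_\sigma=\hat{G}^{(\tau)}_\sigma$ by taking $\E[\,\cdot\,|\,\F_\sigma]$, which yields \eqref{lem:mult:eq}. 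Reorganising your proof in this order (or inserting the displayed identification above) would make it complete.
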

\begin{proof}
Let $(\vt^n)_{n=1}^\infty$ be a sequence of trading strategies $\vt^n=(\vt^n_t)_{0\leq t\leq T}\in\Theta_\tau$ such that $\vt^n\sint S_T\to \hat{G}^{(\tau)}_T$ in $L^2$. Then,
\begin{equation}
(1-\vt^n\sint S_\sigma)^2L_\sigma\leq \E[(1-\vt^n\sint S_T)^2\,|\,\F_{\sigma}]\label{pr:eq1}
\end{equation}
by \cite[Proposition~3.1]{czichowsky.schweizer.13}. Since $(\vt^n\sint S_T)_{n=1}^\infty$ is a convergent sequence in $L^2(P)$, it is bounded in $L^2(P)$ and hence $(\vt^n\sint S_\sigma)_{n=1}^\infty$ is bounded in $L^2(P^\sigma)$ by \eqref{pr:eq1}, where $P^\sigma\sim P$ is defined by $\frac{dP^\sigma}{dP}=\frac{L_{\sigma}}{\E[L_{\sigma}]}>0$. By Mazur's Lemma \cite[Corollary~3.8]{brezis.11}, there exists a sequence $(\vp^n)_{n=1}^\infty$ of trading strategies $\vp^n\in\conv(\vt^n,\vt^{n+1},\ldots)\subseteq\Theta_\tau$ and a random variable $X_\sigma\in L^2(\F_\sigma,P^\sigma)$ such that $\vp^n\sint S_\sigma\to X_\sigma$ in $L^2(P^\sigma)$ and hence
\begin{equation}\label{pr:eq2}
(1-X_\sigma)^2L_\sigma\leq \E[(1-\hat{G}^{(\tau)}_T)^2\,|\,\F_{\sigma}].
\end{equation}

Note that \eqref{pr:eq2} implies that $\left(1-\left(X_\sigma+(1-X_\sigma)\hat{G}^{(\sigma)}_T\right)\right)=(1-X_\sigma)(1-\hat{G}^{(\sigma)}_T)\in L^2(P)$ and hence $Y_\sigma:=X_\sigma+(1-X_\sigma)\hat{G}^{(\sigma)}_T\in L^2(P)$. If we can show that
\begin{equation}\label{pr:eq3}
Y_\sigma=X_\sigma+(1-X_\sigma)\hat{G}^{(\sigma)}_T\in \mathcal{G}_\tau,
\end{equation}
then it follows from \eqref{pr:eq2} that $Y_\sigma$ is optimal for \eqref{A} and hence
\begin{equation}
Y_\sigma=\hat{G}^{(\tau)}_T.\label{pr:eq:2a}
\end{equation}

For the proof of \eqref{pr:eq3}, let $(\psi^n)_{n=1}^\infty$ be a sequence of trading strategies $\psi^n=(\psi^n_t)_{0\leq t\leq T}\in\Theta_\sigma$ such that $\psi^n\sint S_T\to \hat{G}^{(\sigma)}_T$ in $L^2(P)$. By Egorov's Theorem \cite[10.38]{aliprantis.border.06}, there exists a sequence $(F_m)_{m=1}^\infty$ of sets $F_m\in\F_\sigma$ with $P(F_m)\geq 1-1/m$ such that, for each $m\in\N$, we have that $(\vp^n\sint S_\sigma)_{n=1}^\infty$ and $X_\sigma$ are uniformly bounded on $F_m$ and $\vp^n\sint S_\sigma\to X_\sigma$ uniformly on $F_m$. Then, $(\xi^{m,n})_{n=1}^\infty$ given by 
$$\xi^{m,n}=\vp^n\mathbbm{1}_{F_m^c}+\left(\vp^n\mathbbm{1}_{\llbracket0,\sigma\rrbracket}+(1-\vp^n\sint S_\sigma)\psi^n\mathbbm{1}_{\rrbracket\sigma,T\rrbracket}\right)\mathbbm{1}_{F_m}$$
is a sequence of trading strategies $\xi^{m,n}=(\xi^{m,n}_t)_{0\leq t\leq T}\in\Theta(0,\tau)$ such that
$$\xi^{n,m}\sint S_T=\vp^n\sint S_T\mathbbm{1}_{F_m^c}+(1-\vp^n\sint S_{\sigma})(\psi^n\sint S_T)\mathbbm{1}_{F_m}$$
by the local character of the stochastic integrals. Hence, for each $m\in\N$,
$$\xi^{m,n}\sint S_T\xrightarrow{L^2(P)} \hat{G}^{(\tau)}_T\mathbbm{1}_{F_m^c}+(1-X_\sigma)\hat{G}^{(\sigma)}_T\mathbbm{1}_{F_m},\text{ as $n\to\infty$.}$$
Because $\hat{G}^{(\tau)}_T\mathbbm{1}_{F_m^c}+(1-X_\sigma)\hat{G}^{(\sigma)}_T\mathbbm{1}_{F_m}\xrightarrow{L^2(P)}(1-X_\sigma)\hat{G}^{(\sigma)}_T$, as $m\to\infty$, we can select a diagonal sequence $(\xi^{m,n_m})_{m=1}^\infty$ such that $\xi^{m,n_m}=(\xi^{m,n_m}_t)_{0\leq t\leq T}\in\Theta(0,\tau)$ and
$$\xi^{m,n_m}\sint S_T\xrightarrow{L^2(P)}Y_\sigma=X_\sigma+(1-X_\sigma)\hat{G}^{(\sigma)}_T,\text{ as $m\to\infty$,}$$ and hence \eqref{pr:eq3} holds.

From \eqref{pr:eq3}, we obtain
\begin{align*}
\hat{G}^{(\tau)}_\sigma&{}=1-\frac{\hat{M}^{(\tau)}_\sigma}{L_\sigma}\\
&{}=1-\E\left[(1-X_\sigma)(1-\hat{G}^{(\sigma)}_T)\,|\,\F_\sigma\right]\\
&{}=1-(1-X_\sigma)\E\left[(1-\hat{G}^{(\sigma)}_T)\,|\,\F_\sigma\right]=X_\sigma
\end{align*}
so that $X_\sigma$ is uniquely determined as $X_\sigma=\hat{G}^{(\tau)}_\sigma$ and we have \eqref{lem:mult:eq} by \eqref{pr:eq:2a}.
\end{proof}
\begin{lemma}\label{lem:hit0}
Suppose that $L>0$. For each $\tau\in\T$, let
\begin{equation}\label{eq:tauhZ}
\tauhZ_t=\begin{cases}1,&0\leq t<\tau,\\
\frac{\hat{M}^{(\tau)}_t}{L_\tau}=\frac{L_t(1-\hat{G}^{(\tau)}_t)}{L_\tau},&\tau\leq t\leq T.
\end{cases}
\end{equation}
Then, the family $\{\tauhZ_T\}_{\tau\in\T}$ of random variables is a state price density satisfying properties \ref{spd.1}--\ref{spd.3} of Definition~\ref{D:SPDlop} and compatible with $S$ such that $\E[(\tauhZ_T)^2\,|\,\F_\tau]=\frac{1}{L_\tau}$ for all $\tau\in\T$. If in addition $L_->0$, then $\{\tauhZ_T\}_{\tau\in\T}$ satisfies the law of one price, that is, $\{\tauhZ_T\}_{\tau\in\T}$ also fulfills property \ref{spd.4} of Definition~\ref{D:SPDlop}.
\end{lemma}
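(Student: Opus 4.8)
The plan is to read all four properties of Definition~\ref{D:SPDlop}, plus compatibility, directly off the two preceding lemmas, and to isolate property~\ref{spd.4} as the only place where the extra hypothesis $L_->0$ is genuinely used. First I would record that the two expressions in \eqref{eq:tauhZ} coincide, since $\hat{G}^{(\tau)}_t=1-\hat{M}^{(\tau)}_t/L_t$ by \eqref{def:G}; in particular $\tauhZ_T=\hat{M}^{(\tau)}_T/L_\tau=(1-\hat{G}^{(\tau)}_T)/L_\tau$. Properties~\ref{spd.1} and~\ref{spd.3} and the second-moment formula then follow immediately from \eqref{eq:rep:op:2}: because $\hat{M}^{(\tau)}_\tau=L_\tau=\E[(1-\hat{G}^{(\tau)}_T)^2\,|\,\F_\tau]$, one gets $\E[\tauhZ_T\,|\,\F_\tau]=\hat{M}^{(\tau)}_\tau/L_\tau=1$ and $\E[(\tauhZ_T)^2\,|\,\F_\tau]=\E[(1-\hat{G}^{(\tau)}_T)^2\,|\,\F_\tau]/L_\tau^2=1/L_\tau$, which is finite because $L>0$.

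Next I would verify time consistency~\ref{spd.2}. For $\sigma\leq\tau$, Lemma~\ref{lem:mult} supplies the multiplicative identity \eqref{lem:mult:eq}, i.e. $1-\hat{G}^{(\sigma)}_T=(1-\hat{G}^{(\sigma)}_\tau)(1-\hat{G}^{(\tau)}_T)$, so that $\sighZ_T=(1-\hat{G}^{(\sigma)}_\tau)(1-\hat{G}^{(\tau)}_T)/L_\sigma$. Taking $\F_\tau$-conditional expectations and using $\E[1-\hat{G}^{(\tau)}_T\,|\,\F_\tau]=L_\tau$ recovers exactly the process value $\sighZ_\tau=L_\tau(1-\hat{G}^{(\sigma)}_\tau)/L_\sigma$ prescribed by \eqref{eq:tauhZ}, confirming $\sighZ_\tau=\E[\sighZ_T\,|\,\F_\tau]$; multiplying this $\sighZ_\tau$ by $\tauhZ_T=(1-\hat{G}^{(\tau)}_T)/L_\tau$ reproduces $\sighZ_T$, which is precisely~\ref{spd.2}.

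For compatibility I would use Lemma~\ref{lem:mart}. Fix a localizing sequence with $S^*_{\tau_n}\in L^2$ and a coordinate $i$; the simple strategy $\vt:=\one_{\rrbracket\tau_n\wedge\tau,\tau_n\rrbracket}e_i$ vanishes on $\llbracket0,\tau\rrbracket$ and hence lies in $\Theta_\tau\subset\TBtau$. By Lemma~\ref{lem:mart} with $x=0$, the process $\hat{M}^{(\tau)}(\vt\sint S)$ is a martingale, so $\E[\hat{M}^{(\tau)}_T\,\vt\sint S_T\,|\,\F_\tau]=\hat{M}^{(\tau)}_\tau\,\vt\sint S_\tau=0$ as $\vt\sint S_\tau=0$. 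Since $\hat{M}^{(\tau)}_T=L_\tau\tauhZ_T$ and $\vt\sint S_T=S^i_{\tau_n}-S^i_{\tau_n\wedge\tau}$, dividing by the positive $\F_\tau$-measurable $L_\tau$ and pulling out the $\F_\tau$-measurable $S^i_{\tau_n\wedge\tau}$ via~\ref{spd.1} yields $\E[\tauhZ_T S^i_{\tau_n}\,|\,\F_\tau]=S^i_{\tau_n\wedge\tau}$, where conditional integrability is handled by Cauchy--Schwarz together with $S^*_{\tau_n}\in L^2$. This is compatibility.

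Finally, under the additional assumption $L_->0$ I would prove~\ref{spd.4}. Applying the already-established identity at the announcing times gives $\E[(\taunhZ_T)^2\,|\,\F_{\tau_n}]=1/L_{\tau_n}$, so $C=\sup_n 1/L_{\tau_n}=1/\inf_n L_{\tau_n}$. Since $(\tau_n)$ announces $\tau$, one has $\tau_n\uparrow\tau$ with $\tau_n<\tau$, and as $L$ is a (RCLL) semimartingale, $L_{\tau_n}\to L_{\tau-}$ pathwise. The main, if modest, obstacle is upgrading this pointwise limit to a uniform-in-$n$ lower bound: because every $L_{\tau_n}>0$ and the limit $L_{\tau-}>0$, only finitely many terms can fall below $L_{\tau-}/2$, so $\inf_n L_{\tau_n}>0$ and hence $C<\infty$ almost surely. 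This is exactly the step that breaks down once $L_->0$ is dropped, in line with the counterexample of Subsection~\ref{SS:3.2}.
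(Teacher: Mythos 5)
Your proof is correct and takes essentially the same route as the paper's: properties \ref{spd.1}--\ref{spd.3} and the identity $\E[(\tauhZ_T)^2\,|\,\F_\tau]=1/L_\tau$ are read off \eqref{eq:rep:op:2}, time consistency comes from the multiplicative identity \eqref{lem:mult:eq} of Lemma~\ref{lem:mult}, compatibility from the martingale property \eqref{lem:mart:eq1} of Lemma~\ref{lem:mart} applied to buy-and-hold integrands, and property \ref{spd.4} from $\E[({}^{\sigma_n}\!\hat{Z}_T)^2\,|\,\F_{\sigma_n}]=1/L_{\sigma_n}$ combined with the a.s.\ convergence $L_{\sigma_n}\to L_{\sigma-}>0$. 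The only difference is cosmetic: your compatibility step instantiates Lemma~\ref{lem:mart} with the explicit strategy $\one_{\rrbracket\tau_n\wedge\tau,\tau_n\rrbracket}e_i\in\Theta_\tau$ and optional sampling at $\tau$, which is a somewhat more explicit rendering of the paper's terse one-line choice of buy-and-hold strategies.
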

\begin{proof} We begin by verifing that the family $\{\tauhZ_T\}_{\tau\in\T}$ of random variables is a state price density satisfying properties \ref{spd.1}--\ref{spd.3} of Definition~\ref{D:SPDlop}. The definition \eqref{lem:def:M} of $\hat{M}^{(\tau)}$ together with \eqref{eq:rep:op:2} implies that 
$$\E[\tauhZ_T\,|\,\F_\tau]=\E\left[\frac{\hat{M}^{(\tau)}_T}{L_\tau}\,\Big|\,\F_\tau\right]=\frac{\hat{M}^{(\tau)}_\tau}{\hat{M}^{(\tau)}_\tau}=1\quad\text{for all $\tau\in\T$}$$
and hence the correct pricing of the risk-free asset (property \ref{spd.1} of Definition~\ref{D:SPDlop}). The time consistency (property \ref{spd.2} of Definition~\ref{D:SPDlop}) follows from the fact that $\tauhZ_t=(1-\hat{G}^{(\tau)}_t)/L_\tau$ by definition together with \eqref{lem:mult:eq}. Combining that $\tauhZ_T=(1-\hat{G}^{(\tau)}_T)/L_\tau$ with $\E[(1-\hat{G}^{(\tau)}_T)^2|\mathcal{F}_{\tau}]=L_{\tau}<\infty$ for all $\tau\in\T$ by \eqref{eq:rep:op:2}, we obtain that $\tauhZ_T$ is conditionally square integrable (as in property \ref{spd.3} of Definition~\ref{D:SPDlop}) and $\E[(\tauhZ_T)^2|\F_\tau]=\frac{1}{L_\tau}$ for all $\tau\in\T$. Because $\mathbbm{1}_{\rrbracket 0,\sigma_n\wedge\sigma\rrbracket}\in\Theta\subseteq\TB$ is a simple trading strategy for any localizing sequence of stopping times $(\sigma_n)_{n=1}^\infty $ such that $S_{\sigma_n}^*\in L^2$, we have that $S$ is compatible with $\{\tauZ_T\}_{\tau\in\T}$ (in the sense of Definition \ref{D:SPDlop}) by choosing $\tau=0$, $x=S_0$ and $\vt=\mathbbm{1}_{\rrbracket 0,\sigma_n\wedge\sigma\rrbracket}$ in \eqref{lem:mart:eq1}.

For the proof of the bounded conditional expectations of squares at predictable stopping times (property \ref{spd.4} of Definition~\ref{D:SPDlop}), suppose that $L_->0$. Let $\sigma\in\T$ be a predictable stopping time and $(\sigma_n)_{n=1}^\infty$ be an announcing sequence of stopping times of $\sigma$. Because
$$\E[({}^{\sigma_n}\hat{Z}_T)^2\,|\,\F_{\sigma_n}]=\E\left[\left(\frac{1-\hat{G}^{(\sigma_n)}_T}{L_{\sigma_n}}\right)^2\,\bigg|\,\F_{\sigma_n}\right]=\frac{1}{L_{\sigma_n}},$$we obtain that $\sup_{n\in\N}\E[({}^{\sigma_n}\hat{Z}_T)^2\,|\,\F_{\sigma_n}]=\sup_{n\in\N}\frac{1}{L_{\sigma_n}}<\infty$, since $L_{\sigma_n}$ converges to $L_{\sigma-}$ $P$-a.s.~ and we have both $L>0$ and $L_->0$.
\end{proof}

\begin{proof}[Proof of Theorem \ref{T:M1}] The most efficient way to prove the theorem is to show ``\ref{MT1.1} $\Rightarrow$ \ref{MT1.5}'', ``\ref{MT1.5} $\Rightarrow$ \ref{MT1.4}'', ``\ref{MT1.4} $\Rightarrow$ \ref{MT1.3}'', ``\ref{MT1.3} $\Rightarrow$ \ref{MT1.2}'', and ``\ref{MT1.2} $\Rightarrow$ \ref{MT1.1}''.

``\ref{MT1.1} $\Rightarrow$ \ref{MT1.5}'': For a proof by contradiction, we first suppose that $L>0$ fails. Then, we have $P(F)>0$ for $F:=\{ \tau<\infty\}$ and the stopping time
$$\tau:=\inf\{ t>0\,:\,L_t=0\}$$
and therefore $0=\mathbbm{1}_FL_\tau=\mathbbm{1}_F\left(\essinf_{\vt\in\TB_\tau}\E\left[(1-\vt\sint S_T)^2\,|\,\F_\tau\right]\right)$. Because the family
$$\Gamma=\left \{\E\left[(1-\vt\sint S_T)^2\,|\,\F_\tau\right]\,:\,\vt\in\TB_\tau\right\}$$
of random variables is stable under taking minima by \cite[Lemma 2.18(1)]{czichowsky.schweizer.13}, there exists a sequence $(\vt^n)_{n=1}^\infty$ of trading strategies $\vt^n=(\vt^n_t)_{0\leq t\leq T}\in\TB_\tau$ such that 
$$\E\left[\left(\mathbbm{1}_F-\mathbbm{1}_F(\vt^n\sint S_T)\right)^2\right]=\E\left[\mathbbm{1}_F\E\left[(1-\vt^n\sint S_T)^2\,|\,\F_\tau\right]\right]\to 0.$$
As $0$ and $\vt^n$ are in $\TB_\tau$ for all $n\geq 1$, we have by \cite[Lemma 2.18(1)]{czichowsky.schweizer.13} that $\psi^n:=(\mathbbm{1}_{F}\vt^n)\in\TB_\tau$ for all $n\geq 1$. Therefore, we have a sequence $(\psi^n)_{n=1}^\infty$ of trading strategies $\psi^n\in\TB_\tau$ and $F\in\F_\tau$ such that $\mathbbm{1}_{F}(1-\psi^n\sint S_T)\xrightarrow{L^2}0$, but $\lim_{n\to\infty}\mathbbm{1}_{F}(1-\psi^n\sint S_\tau)=\mathbbm{1}_F\ne0$. Approximating the strategies $\psi^n\in\TB_\tau$ by simple trading strategies $\vp^n\in \TB_{\tau}$, this contradicts \ref{def:lop:1} of Definition \ref{D:LOP1S} of the law of one price. 

If $L_->0$ fails, then $P(F)>0$ for $F:=\{\sigma<\infty\}$ and the predictable stopping time $\sigma=\inf\{ t>0\,:\,L_{t-}=0\}$. Let $(\sigma_n)_{n=1}^\infty$ be an announcing sequence of stopping times of $\sigma$ and set $F_n=\{ \sigma_n<\infty\}$. Then, $\mathbbm{1}_{F_n}\xrightarrow{L^0}\mathbbm{1}_{F}$ and $\mathbbm{1}_{F_n}L_{\sigma_n}\xrightarrow{L^2}\mathbbm{1}_FL_{\sigma-}$. Since each of the families
$$\Gamma_n=\left \{\E\left[(1-\vt\sint S_T)^2\,|\,\F_{\sigma_n}\right]\,:\,\vt\in\TB_{\sigma_n}\right\}$$
of random variables is stable under taking minima by \cite[Lemma 2.18(1)]{czichowsky.schweizer.13} for each $n\geq 1$, there exists a diagonal sequence $(\vt^n)_{n=1}^\infty$ of trading strategies $\vt^n=(\vt^n_t)_{0\leq t\leq T}\in\TB_{\sigma_n}$ such that 
$$\E\left[\left(\mathbbm{1}_{F_n}-\mathbbm{1}_{F_n}(\vt^n\sint S_T)\right)^2\right]=\E\left[\mathbbm{1}_{F_n}\E\left[(1-\vt^n\sint S_T)^2\,|\,\F_{\sigma_n}\right]\right]\to 0.$$ 
Because $0$ and $\vt^n$ are in $\TB_{\sigma_n}$ for all $n\geq 1$, we have again by \cite[Lemma~2.18(1)]{czichowsky.schweizer.13} that $\psi^n:=(\mathbbm{1}_{F_n}\vt^n)\in\TB_{\sigma_n}$ for all $n\geq 1$. This gives a sequence $(\psi^n)_{n=1}^\infty$ of trading strategies $\psi^n\in\TB_{\sigma_n}$ and $F_n\in\F_{\sigma_n}$ such that $\mathbbm{1}_{F_n}(1-\psi^n\sint S_T)\xrightarrow{L^2}0$, but $\lim_{n\to\infty}\mathbbm{1}_{F_n}(1-\psi^n\sint S_\tau)=\mathbbm{1}_F\ne0$ and therefore yields a contradict to \ref{def:lop:2} of Definition \ref{D:LOP1S} of the law of one price by approximating the strategies $\psi^n\in\TB_{\sigma_n}$ by simple trading strategies $\vp^n\in \TB_{\sigma_n}$.\smallskip\\
\smallskip ``\ref{MT1.5} $\Rightarrow$ \ref{MT1.4}'': This follows directly from Proposition \ref{P:sopZ} and Lemma \ref{lem:hit0}.\smallskip\\
``\ref{MT1.4} $\Rightarrow$ \ref{MT1.3}'': Since $\Exp(N)$ is a square-integrable $\Exp$-density (Definition~\ref{D:Edens}), Proposition \ref{P:sopZ} yields that that setting $\tauZ_T:=\tauE(N)_T$ for $\tau\in\T$ gives a state price density $\{\tauZ_T\}_{\tau\in\T}$ satisfying LOP (Definitions~\ref{D:SPDlop}). Moreover, the property that $S$ is an $\Exp(N)$-local martingale (Definitions~\ref{D:Emart}) directly implies that $\{\tauZ_T\}_{\tau\in\T}$ is compatible with $S$ (Definition~\ref{D:SPDlop}). \smallskip\\
``\ref{MT1.3} $\Rightarrow$ \ref{MT1.2}'': Given a compatible state price density $\{\tauZ_T\}_{\tau\in\T}$ satisfying LOP, we define a family $\{p_\tau\}_{\tau\in\T}$ of operators $p_\tau: L^2(\F_T,P)\to L^0(\F_\tau,P)$ by
\begin{equation*}
\text{$p_{\tau}(H)=\E[\tauZ_T\mkern0.1mu H\,|\,\F_\tau]$ for all $H\in L^2(\F_T,P)$.}\label{pr:main:eq:p}
\end{equation*}
Since $\{\tauZ_T\}_{\tau\in\T}$ satisfies LOP and is compatible with $S$ (Definition~\ref{D:SPDlop}), it is straightforward to check that $\{p_\tau\}_{\tau\in\T}$ is a compatible price system satisfying LOP (Definition~\ref{D:PSlop}) by comparing the definitions using the implication ``\ref{ubp3}$\ \Rightarrow$\ \ref{ubp1}'' of Proposition \ref{P:UBP}.\smallskip\\
``\ref{MT1.2} $\Rightarrow$ \ref{MT1.1}'':
For a proof by way of contradiction, suppose that \ref{MT1.1} and therefore either condition \ref{def:lop:1} or condition \ref{def:lop:2} of Definition \ref{D:LOP1S} fail.

We begin with condition \ref{def:lop:1}. If condition \ref{def:lop:1} fails, there exist a stopping time $\tau\in\T$, an $\F_\tau$-measurable endowment $x_\tau$ and a sequence $(\vt^n)_{n=1}^\infty$ of simple trading strategies such that $x_\tau+\vt^n\one_{\rrbracket\tau,T\rrbracket}\sint S_T\xrightarrow{L^2}0$ and  $x_\tau\ne0$. By the conditional linearity (Property \ref{D:PSlop:3a} of Definition \ref{D:PSlop}) and the time consistency for simple strategies, we have that $p_\tau(x_\tau+\vt^n\one_{\rrbracket\tau,T\rrbracket}\sint S_T)=x_\tau$ and hence $x_\tau=p_\tau(x_\tau+\vt^n\one_{\rrbracket\tau,T\rrbracket}\sint S_T)\xrightarrow{L^0}0$ by the continuity and conditional linearity of $p_\tau$ (Properties \ref{D:PSlop:3b} and \ref{D:PSlop:3a} of Definition \ref{D:PSlop}). This, however, is a contradiction to $x_\tau\ne0$.
Similarly, if condition \ref{def:lop:2} fails, there exist a predictable stopping $\sigma\in\T$, an announcing sequence $(\sigma_n)_{n=1}^\infty$ of stopping times for $\sigma$, sequences $(x^n_{\sigma_n})_{n=1}^\infty$ of $\F_{\sigma_n}$-measurable random variables and $(\vt^n)_{n=1}^\infty$ of simple trading strategies such that 
$x^n_{\sigma_n}+\vt^n\one_{\rrbracket\sigma_n,T\rrbracket}\sint S_T\xrightarrow{L^2}0$ and 
$x^n_{\sigma_n}\xrightarrow{L^0} x_{\sigma-}$ for some random variable $x_{\sigma-}$ with $x_{\sigma-}\ne0$. Then, we have, again by the conditional linearity (Property \ref{D:PSlop:3a} of Definition \ref{D:PSlop}) and the time consistency for simple strategies, that 
$p_{\sigma_n}(x^n_{\sigma_n}+\vt^n\one_{\rrbracket\sigma_n,T\rrbracket}\sint S_T)=x^n_{\sigma_n}$. By Definition \ref{D:PSlop}, $\{p_{\sigma_n}\}_{n=1}^\infty$ is a sequence of $\F_{\sigma_n}$-linear continuous  mappings $p_{\sigma_n}:L^2(\F_T,P)\to L^0(\F_{\sigma_n},P)$ that are pointwise convergent, that is, $(p_{\sigma_n}(H))_{n=1}^\infty$ is a convergent sequence in $L^0$ for all $H\in L^2(\F_T,P)$. Therefore, it follows from the implication ``\ref{ubp1}$\Rightarrow$\ref{ubp3}'' of Proposition \ref{P:UBP} that the conditional operator norms $\|p_{\sigma_n}\|$ are uniformly bounded, that is,
$$C:=\sup_{n\in\N}\|p_{\sigma_n}\|<\infty.$$ Combining the latter with $x^n_{\sigma_n}+\vt^n\one_{\rrbracket\sigma_n,T\rrbracket}\sint S_T\xrightarrow{L^2}0$ gives that
\setlength{\abovedisplayskip}{0pt}
\setlength{\belowdisplayskip}{0pt}
\[ 
|x_{\sigma_n}|=|p_{\sigma_n}(x^n_\tau+\vt^n\one_{\rrbracket\sigma_n,T\rrbracket}\sint S_T)|\leq \sup_{n\in\N}\|p_{\sigma_n}\| \left(\E\left[\left(x^n_{\sigma_n}+\vt^n\one_{\rrbracket\sigma_n,T\rrbracket}\sint S_T\right)^2\,\big|\,\F_{\sigma_n}\right]\right)^{\frac{1}{2}}\xrightarrow{L^0}0,
\]
which contradicts $x_{\sigma-}\ne0$ with $x^n_{\sigma_n}\xrightarrow{L^0} x_{\sigma-}$.\smallskip\\
\noindent \ref{MT1.6}: By \ref{MT1.4}, there exists a semimartingale $N$ such that $\Exp(N)$ is a square-integrable $\Exp$-density and $S$ is an $\Exp(N)$-local martingale. Therefore, the closedness of 
$\{\vt\sint S_T:\vt\in\TB\}$
follows by applying \cite[Theorem 2.16]{czichowsky.schweizer.13}.\smallskip\\
\noindent \ref{MT1.7}: Fix $\tau\in\T$ and let $\tilde{S}=\mathbbm{1}_{\rrbracket \tau,T\rrbracket }\sint S$. Observe that $\TB(\tilde{S})=\TBtau(S)$ and that $\vt\sint \tilde{S}=\vt\sint S$ for all $\vt\in\TB(\tilde{S})=\TBtau(S)$ by the associativity of stochastic integrals. Moreover, the properties \ref{MT1.1}--\ref{MT1.5} imposed on $S$ in Theorem \ref{T:M1} are inherited by $\tilde{S}$. By \ref{MT1.4}, there exists a semimartingale $N$ such that $\Exp(N)$ is a square-integrable $\Exp$-density and $S$ hence also $\tilde{S}$ is an $\Exp(N)$-local martingale. Therefore, the closedness of 
$\{\vt\sint S_T:\vt\in\TBtau\}=\{\tilde{\vt}\sint \tilde{S}_T\,:\,\tilde{\vt}\in\TB(\tilde{S})\}$ 
follows again by applying \cite[Theorem 2.16]{czichowsky.schweizer.13} to $\tilde{S}$.\smallskip\\
\noindent \ref{MT1.8}: Fix $\vt\in \TB_{\sigma-}$. Because $\one_{\llbracket 0,\sigma_n\rrbracket}\vt=0$ for all $n\in\N$ by definition of $\TB_{\sigma-}$, we have that $\vt\in\TB_{\sigma_n}$ for all $n\in\N$ and hence $\{\vt\sint S_T\,:\,\vt\in\TB_{\sigma-}\} \subseteq \cap_{n=1}^\infty\{\vt\sint S_T\,:\,\vt\in\TB_{\sigma_n}\}.$

Next, suppose that $\vt\in\TB_{\sigma_n}$ for all $n\in\N$. Then, $\one_{\llbracket 0,\sigma_n\rrbracket}\vt=0$ for all $n\in\N$ by definition of $\TB_{\sigma_n}$. Therefore, $(\one_{\llbracket 0,\sigma\llbracket}\vt)\sint S = \lim_{n\to\infty}(\one_{\llbracket 0,\sigma_n\rrbracket}\vt)\sint S=0$ in the semimartingale topology and hence $\vt\in\TB_{\sigma-}$ so that $  \cap_{n=1}^\infty\{\vt\sint S_T\,:\,\vt\in\TB_{\sigma_n}\}\subseteq \{\vt\sint S_T\,:\,\vt\in\TB_{\sigma-}\}.$\bigskip
\end{proof}
\begin{acknowledgement}
We thank two anonymous referees, associate editor, and co-editor for careful reading of the paper and helpful comments. 
\end{acknowledgement}
\def\shorturl#1{\href{http://#1}{\nolinkurl{#1}}}
\def\MR#1{\href{http://www.ams.org/mathscinet-getitem?mr=#1}{MR#1}}
\def\ARXIV#1{\href{https://arxiv.org/abs/#1}{arXiv:#1}}
\def\DOI#1{\href{https://doi.org/#1}{doi:#1}}

\end{document}